\documentclass[a4paper,11pt]{amsart}

\tolerance=1000
\hbadness=8000
\hfuzz=15.00pt
\vbadness=10000
\vfuzz=5.00pt

\usepackage{amsfonts, latexsym, amsmath, amssymb, amsthm, amscd}
\usepackage[all]{xy}
\usepackage[english]{babel}
\usepackage[utf8]{inputenc}
\usepackage{graphicx}
\usepackage{booktabs}
\usepackage{array, tabularx}
\usepackage{textcomp}
\usepackage{tikz, tikz-cd}
\usepackage{multirow}
\usepackage{paralist}
\usepackage{comment}
\usepackage{url}
\usepackage{tensor}
\usepackage{MnSymbol}
\usepackage{color}
\usepackage{mathdots}
\usepackage[hypertexnames=false,
backref=page,
    pdftex,
    pdfpagemode=UseNone,
    breaklinks=true,
    extension=pdf,
    colorlinks=true,
    linkcolor=blue,
    citecolor=blue,
    urlcolor=blue,
]{hyperref}

\usepackage{enumitem}
\usepackage[top=1.5in, bottom=.9in, left=0.9in, right=0.9in]{geometry}

\theoremstyle{plain}
\newtheorem{theorem}{Theorem}[section]
\newtheorem*{theorem*}{Theorem}
\newtheorem{definition}[theorem]{Definition}
\newtheorem*{definition*}{Definition}
\newtheorem{lemma}[theorem]{Lemma}
\newtheorem{prop}[theorem]{Proposition}
\newtheorem*{prop*}{Proposition}
\newtheorem{cor}[theorem]{Corollary}
\newtheorem{rem}[theorem]{Remark}
\theoremstyle{definition}

\newtheorem*{mt*}{Main Theorem}




\DeclareMathOperator{\Span}{Span}
\DeclareMathOperator{\id}{id}
\DeclareMathOperator{\End}{End}

\DeclareMathSymbol{\Finv} {\mathord}{AMSb}{"60}

\newcommand\restrict[1]{\raisebox{-.5ex}{$|$}_{#1}}

\newcommand{\R}{\mathbb{R}}
\newcommand{\C}{\mathbb{C}}

\newcommand{\del}{\partial}

\numberwithin{equation}{section}

\let\phi\varphi

\DeclareFontFamily{U}{MnSymbolC}{}
\DeclareSymbolFont{MnSyC}{U}{MnSymbolC}{m}{n}
\DeclareFontShape{U}{MnSymbolC}{m}{n}{
    <-6>  MnSymbolC5
   <6-7>  MnSymbolC6
   <7-8>  MnSymbolC7
   <8-9>  MnSymbolC8
   <9-10> MnSymbolC9
  <10-12> MnSymbolC10
  <12->   MnSymbolC12}{}
\DeclareMathSymbol{\intprod}{\mathbin}{MnSyC}{'270}

\allowdisplaybreaks

\author{Richard Hind}
\address[Richard Hind]{Department of Mathematics \\
University of Notre Dame \\
Notre Dame, IN 46556}
\email{hind.1@nd.edu}

\author{Tommaso Sferruzza}
\address[Tommaso Sferruzza]{
Dipartimento di Scienze Matematiche, Fisiche e Informatiche\\
Unità di Mate\-matica e Informatica\\
Università degli studi di Parma}
\email{tommaso.sferruzza@unipr.it}

\author{Adriano Tomassini}
\address[Adriano Tomassini]{
Dipartimento di Scienze Matematiche, Fisiche e Informatiche\\
Unità di Mate\-matica e Informatica\\
Università degli studi di Parma}
\email{adriano.tomassini@unpir.it}

\title[Almost complex blow-ups and positive closed $(1,1)$-forms on $4$-dimensional \dots]{Almost complex blow-ups and positive closed $(1,1)$-forms on $4$-dimensional almost complex manifolds}

\keywords{almost complex manifold; almost complex blow-up; Nijenhuis tensor; almost K\"ahler metric}
\thanks{The first author is partially supported by Simons Foundation grant \# 317510. The second author is partially supported by GNSAGA of INdAM. The third author is partially supported by the project PRIN2017 “Real and Complex Manifolds: Topology, Geometry and holomorphic dynamics” (code 2017JZ2SW5), and by GNSAGA of INdAM}
\subjclass[2010]{53C15; 32Q60}

\date{\today}

\begin{document}
\maketitle

\begin{abstract}
Let $(M,J)$ be a $2n$-dimensional almost complex manifold and let $x\in M$. We define the notion of {\em almost complex blow-up} of $(M,J)$ at $x$. We prove the existence of almost complex blow-ups at $x$ under suitable assumptions on the almost complex structure $J$ and we provide explicit examples of such a construction. We note that almost complex blow-ups are unique. When $(M,J)$ is a $4$-dimensional almost complex manifold, we give an obstruction on $J$ to the existence of almost complex blow-ups at a point and prove that the almost complex blow-up at a point of a compact almost K\"ahler manifold is almost K\"ahler.
\end{abstract}
\tableofcontents
\section{Introduction}
Let $M$ be an $n$-dimensional complex manifold and $x$ be a point of $M$. The rough idea of the complex blow-up of $M$ at the point $x$ is to replace $x$ with a  copy of the complex $(n-1)$-dimensional projective space $\mathbb{P}^{n-1}(\C)$. Indeed, let 
$$
\mathcal{B}:=\Big\{\Big( (z_1,\ldots ,z_n),[v_1:\ldots ,v_n]\Big)\in\mathcal{U}\times\mathbb{P}^{n-1}(\C)\,\,\,\vert\,\,\, z_iv_j-z_jv_i=0\,\,\quad i,j=1,\ldots ,n\Big\}.
$$
then denoting by $p_1^{\mathcal{B}}$ the restriction to $\mathcal{B}$ of the projection $p_1$ on the first factor, we have that $E:=(p_1^{\mathcal{B}})^{-1}(0)=\mathbb{P}^{n-1}(\C)$. If $M$ is an $n$-dimensional complex manifold and $x$ is a point of $M$, then one can work locally taking a holomorphic chart $\phi:\mathcal{U}\to\phi(\mathcal{U})\subset M$, $0\in\mathcal{U}\subset\C^n$, $x=\phi(0)$; then $\phi\circ p_1^{\mathcal{B}}:\mathcal{B}\setminus E\to \phi(\mathcal{U})\setminus \{x\}$ and $\phi\circ p_1^{\mathcal{B}}$ is a diffeomorphism between a neighbourhood $V$ of 
$
\{0\}\times\mathbb{P}^{n-1}(\C)\subset\mathcal{B}
$
and a neighbourhood of $x$ in $M$. Then the {\em blow-up} of $M$ at $x$ is the complex manifold defined as 
$$
\tilde{M}_x=\left(M\setminus \{x\}\cup V\right)\slash \sim
$$
where the equivalence relation $\sim$ identifies $\left((z_1,\ldots,z_n),[v_1:,\ldots,:v_n]\right)\in V\setminus\mathbb{P}^{n-1}(\C)$ with $\phi((z_1,\ldots,z_n))$.\newline
Furthermore, for a given $k$-dimensional complex submanifold $Y$ of an $n$-dimensional complex manifold $M$, it is defined the complex blow-up $\tilde{M}_Y$ of $M$ along $Y$ 
(see e.g., \cite[p.602]{Gr-H}, \cite[p.199]{Bl}, \cite[p.78]{Voi}). It turns out that if $M$ is K\"ahler and $Y$ is a compact, then $\tilde{M}_Y$ is K\"ahler (see e.g., \cite[p.80]{Voi}).\newline 
Assume now that $(M,J)$ is a $2n$-dimensional almost complex manifold, that is $M$ is a $2n$-dimensional smooth manifold endowed with $J\in \End(TM)$ such that, $J^2=-id_{TM}$. The {\em Nijenhuis tensor} of $J$ is  the $(1,2)$-tensor field $N_J$ defined, for every pair of vector fields $X,Y$ on $M$, as 
$$
N_J(X,Y)=[JX,JY]-[X,Y],-J[JX,Y]-J[X,JY].
$$
Then in view of the celebrated Theorem by Newlander and Nirenberg, $J$ is integrable, that is it is induced by the structure of a complex manifold, if and only if the Nijenhuis tensor of $J$ vanishes identically. As proved by McDuff and Salamon (see \cite[p.302]{DS}), the blow-up construction at a point can be extended to the case of an almost complex manifold $(M,J)$, by assuming that the almost complex structure $J$ is integrable in a neighbourhood of $x$, or, equivalently, the Nijenhuis tensor $N_J$ of $J$ vanishes on a neighbourhood of $x$. In a similar way, the blow-up of a $2n$-dimensional almost complex manifold $(M,J)$ along a $J$-invariant $2k$-dimensional submanifold $Y$ of $M$ can be defined by assuming that $J$ is integrable in a neighbourhood of $Y$. For the notion of the {\em symplectic blow-up} we refer to the book by McDuff and Salamon \cite{DS} and the references therein.\vskip.1truecm

In the present paper we are interested in defining the blow-up at a point in the category of the almost complex manifolds, without assuming the integrability of the almost complex structure in a neighbourhood of the point $x$. 
\begin{definition}\label{blow-up-almost-complex-intro}
A {\em blow-up} of a $2n$-dimensional almost complex manifold $(M,J)$ at a point $x$ is a pair $((\tilde{M}_x,\tilde{J}),\pi)$, where $(\tilde{M}_x,\tilde{J})$ is a $2n$-dimensional almost complex manifold and $\pi:\tilde{M}_x\to M$ is a $(\tilde{J},J)$-holomorphic map such that $E=\pi^{-1}(x)$ is an embedded $\mathbb{P}^{n-1}(\C)$ and $\pi:\tilde{M}_x\setminus E\to M\setminus\{x\}$ is a bijection.  
\end{definition}

As reminded before, the existence of local holomorphic coordinates on a complex manifold $(M,J)$ allows to construct the blow-up at a point. For an arbitrary almost complex manifold $(M,J)$, with $J$ possibly non integrable, we have non local holomorphic coordinates. We start to fix a local smooth chart $(\mathcal{U},\phi)$ at $x$, for $\mathcal{U}\subset\C^n$, $\phi(0)=x$ and we consider the almost complex structure on $\C^n$, defined as $\mathbb{J}:=d\phi^{-1}\circ J\circ d\phi$. Then, as for the case when $J$ is integrable, we define $\mathcal{B}\subset\mathcal{U}\times\mathbb{P}^{n-1}(\C)$ and we take $p_1^{\mathcal{B}}:\mathcal{B}\to \mathcal{U}$ the restriction to $\mathcal{B}$ of the natural projection $p_1:\mathcal{U}\times\mathbb{P}^{n-1}(\C)\to \mathcal{U}$. It turns out that 
$p_1^{\mathcal{B}}:\mathcal{B}\setminus (p_1^{\mathcal{B}})^{-1}(0)\to \mathcal{U}\setminus \{0\}$ is a diffeomorphism and consequently 
$\tilde{\mathbb{J}}=d(p_1^{\mathcal{B}})^{-1}\circ \mathbb{J}\circ dp_1^{\mathcal{B}}
$ 
gives rise to an almost complex structure away from  $(p_1^{\mathcal{B}})^{-1}(0)=\mathbb{P}^{n-1}(\C)$. At this point, we need to extend the almost complex structure $\tilde{\mathbb{J}}$ across $(p_1^{\mathcal{B}})^{-1}(0)$. We will give sufficient conditions on the chart in order that $\tilde{J}$ has a smooth extension. We will see in Lemma \ref{topological-extension} that a blow-up exists if and only if $\tilde{J}$ has a smooth extension in some local chart (but not all see Remark \ref{non-sufficient}).

Usher in \cite[Section 6]{U} has shown that given any choice of chart, there exists a $\mathcal{C}^1$-atlas on $\mathcal{B}$ such that $\tilde{J}$ has a Lipschitz extension. \newline 
We introduce the following definition, see Definition \ref{strong-line-condition-2n}.
\begin{definition*}
Let $(M,J)$ be $2n$-dimensional almost complex manifold. The almost complex structure $J$ satisfies the {\em line condition at} $x\in M$ if there exists a smooth local chart $(\mathcal{U},\phi)$, $0\in\mathcal{U}\subset\C^n$, $\phi:\mathcal{U}\to \phi(\mathcal{U})\subset M$, $\phi(0)=x$, such that for all $(p,q)\in \C^n$ the almost complex structure coincides with $i$ on the complex subspace $Span_\C\langle p,q\rangle$ of $T_{(p,q)}\C^n$ spanned by $(p,q)$ and also coincides with $i$ on the quotient $T_{(p,q)}\C^n\slash Span_\C\langle p,q\rangle$.
\end{definition*}
We prove the following theorem, see Theorem \ref{theorem-blowup-strong-2n}.
\begin{theorem*}
Let $(M,J)$ be a $2n$-dimensional almost complex manifold. If $J$ satisfies the line condition at $x\in M$,  then there exists an almost complex blow-up $(\tilde{M}_x,\tilde{J})$ of $(M,J)$ at $x$.
\end{theorem*}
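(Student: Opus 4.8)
The plan is to realise the blow-up inside the manifold $\mathcal{B}\subset\mathcal{U}\times\mathbb{P}^{n-1}(\C)$ attached to the chart $(\mathcal{U},\phi)$ furnished by the line condition. Over $\mathcal{B}\setminus E$, where $E:=(p_1^{\mathcal{B}})^{-1}(0)\cong\mathbb{P}^{n-1}(\C)$, the map $p_1^{\mathcal{B}}$ is a diffeomorphism onto $\mathcal{U}\setminus\{0\}$, so I put $\tilde{\mathbb{J}}:=(dp_1^{\mathcal{B}})^{-1}\circ\mathbb{J}\circ dp_1^{\mathcal{B}}$ there, with $\mathbb{J}=d\phi^{-1}\circ J\circ d\phi$. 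The whole content of the theorem is that the line condition forces $\tilde{\mathbb{J}}$ to extend \emph{smoothly} across $E$. Granting this, one takes a neighbourhood $V$ of $E$ in $\mathcal{B}$, glues $V\setminus E$ to $M\setminus\{x\}$ along the diffeomorphism $\phi\circ p_1^{\mathcal{B}}$, lets $\tilde J$ be $J$ on $M\setminus\{x\}$ and $\tilde{\mathbb{J}}$ on $V$ (compatible on the overlap because $\tilde{\mathbb{J}}$ is $\phi\circ p_1^{\mathcal{B}}$-related to $J$ off $E$ by construction), and lets $\pi$ be induced by $\phi\circ p_1^{\mathcal{B}}$ together with the identity of $M\setminus\{x\}$. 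Then $\pi$ is $(\tilde J,J)$-holomorphic (trivially on $M\setminus\{x\}$, by construction on $V\setminus E$, and on $E$ by continuity, $\mathcal{B}\setminus E$ being dense in $\mathcal{B}$), $\pi^{-1}(x)=E$ is an embedded $\mathbb{P}^{n-1}(\C)$, and $\pi$ is a bijection off $E$; alternatively, once the smooth local extension is at hand one may simply invoke Lemma~\ref{topological-extension}.

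Everything thus reduces to the smooth extension, which I check in the standard charts $U_k=\{((v_1,\dots,v_n),[w_1:\dots:w_n])\in\mathcal{B}:w_k\neq0\}$, $k=1,\dots,n$, with holomorphic coordinates $\zeta:=v_k$ and $t_j:=w_j/w_k$ for $j\neq k$; writing $t_k:=1$ and $\tau:=(t_1,\dots,t_n)$, the blow-down becomes $F_k(\zeta,t)=\zeta\tau$ and $E=\{\zeta=0\}$. Since $F_k$ is holomorphic, $dF_k$ — and, where it exists, $dF_k^{-1}$ — commutes with multiplication by $i$; moreover $dF_k(\partial_\zeta)=\tau$, which spans the complex line $\C\cdot F_k(\zeta,t)$, while $dF_k(\partial_{t_j})=\zeta\,e_j$. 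Set $A_v:=\mathbb{J}_v-i\in\End_{\R}(\C^n)$; the line condition says exactly that $A_v$ annihilates $\C v$ and that $A_v(\C^n)\subseteq\C v$ (in particular $A_0=0$). Feeding this into $\tilde{\mathbb{J}}=dF_k^{-1}\circ\mathbb{J}_{\zeta\tau}\circ dF_k$, valid for $\zeta\neq0$, one finds that $\tilde{\mathbb{J}}-i$ annihilates the real plane $\C\,\partial_\zeta$ (using $\mathbb{J}_{\zeta\tau}\tau=i\tau$) and sends the complex subspace spanned by the $\partial_{t_j}$ into $\C\,\partial_\zeta$, with $(\tilde{\mathbb{J}}-i)(\partial_{t_j})=dF_k^{-1}\big(A_{\zeta\tau}(\zeta e_j)\big)$ and similarly on $i\,\partial_{t_j}$ with $A_{\zeta\tau}(i\zeta e_j)$. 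This is precisely the step that uses $A_v(\C^n)\subseteq\C v$: it guarantees $A_{\zeta\tau}(\zeta e_j)\in\C\tau$, the one direction on which $dF_k^{-1}$ is bounded — indeed $\zeta$-independent, mapping $c\tau\mapsto c\,\partial_\zeta$ — rather than amplifying by $1/\zeta$; so $\tilde{\mathbb{J}}-i$ acquires exactly the shape of $A_v$ itself, its matrix entries being those of $A_{\zeta\tau}$ read off through the substitution $v=\zeta\tau$ and the extraction of the $k$-th component of a vector in $\C\tau$ (legitimate since $\tau_k=1$).

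It remains only to see that these entries are smooth in $(\zeta,t)$ up to and including $\zeta=0$. Since $A$ is smooth on $\mathcal{U}$ and $v=\zeta\tau$ is polynomial in $(\zeta,t)$, the composite $A_{\zeta\tau}$ is smooth in $(\zeta,t)$; splitting $\zeta e_j=\re(\zeta)\,e_j+\im(\zeta)\,(ie_j)$ by $\R$-linearity of $A_v$, each entry of $\tilde{\mathbb{J}}-i$ is of the form $\re(\zeta)\cdot(\text{smooth})+\im(\zeta)\cdot(\text{smooth})$, hence smooth and vanishing on $E$; in particular $\tilde{\mathbb{J}}|_E=i$. Thus $\tilde{\mathbb{J}}$ extends smoothly over each $U_k$; the extensions coincide on overlaps, and with the original $\tilde{\mathbb{J}}$ away from $E$, by continuity, hence patch to a smooth tensor field on $\mathcal{B}$, and $\tilde{\mathbb{J}}^2=-\id$ holds throughout $\mathcal{B}$ by density. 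The one genuinely delicate point is the inclusion $A_v(\C^n)\subseteq\C v$ coming from the line condition: it is exactly what stops the $1/\zeta$ blow-up of $dF_k^{-1}$ from producing a real singularity of $\tilde{\mathbb{J}}$ along $E$ (for a general chart this fails and one recovers, as Usher observed, only a Lipschitz extension). I expect the coordinate computation of $\tilde{\mathbb{J}}$ to be the only real work here; the gluing and the verification of the three properties of $\pi$ are routine.
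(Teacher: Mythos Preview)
Your proof is correct and follows essentially the same approach as the paper. The paper's argument (stated in dimension $4$ and extended to $2n$ by ``similar arguments'') observes that $dp_1^{\mathcal{B}}$ is complex linear and carries the fibres $\{w=\text{const}\}$ to complex lines through $0$, so the line condition on $\mathbb{J}$ translates into $\tilde{\mathbb{J}}$ agreeing with $i$ on $\C\,\partial_\zeta$ and on the quotient by it; the only nontrivial entry of $\tilde{\mathbb{J}}$ is then the $\partial_\zeta$-component of $\tilde{\mathbb{J}}(\partial_{t_j})$, which is read off from the explicit formula \eqref{almost-complex-structure-blowup-explicit-2n} as $A_{jk}\cdot z_j b_k$ and is manifestly smooth. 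Your repackaging via $A_v:=\mathbb{J}_v-i$ (with the line condition encoded as $A_v|_{\C v}=0$ and $A_v(\C^n)\subseteq\C v$) makes the same mechanism transparent: the inclusion $A_{\zeta\tau}(\zeta e_j)\in\C\tau$ routes the only potentially dangerous term through the one direction on which $dF_k^{-1}$ is $\zeta$-independent, and your explicit $\R$-linear splitting $\zeta e_j=\re(\zeta)\,e_j+\im(\zeta)\,(ie_j)$ spells out the smoothness check that the paper leaves implicit.
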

We also prove the following, see Theorem \ref{uniqueness-blowup-2n}. 
\begin{theorem*}
Almost complex blow-ups at a point are unique.
\end{theorem*}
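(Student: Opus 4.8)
The plan is to produce, from two almost complex blow-ups $\bigl((\tilde M_x,\tilde J),\pi\bigr)$ and $\bigl((\tilde M'_x,\tilde J'),\pi'\bigr)$ of $(M,J)$ at $x$, with exceptional divisors $E=\pi^{-1}(x)$ and $E'=(\pi')^{-1}(x)$, a biholomorphism $F\colon(\tilde M_x,\tilde J)\to(\tilde M'_x,\tilde J')$ with $\pi'\circ F=\pi$; once such an $F$ exists it is automatically unique, being determined on the dense open subset $\tilde M_x\setminus E$. The starting point is that $\pi$ restricts to a biholomorphism of $\tilde M_x\setminus E$ onto $M\setminus\{x\}$: by Definition~\ref{blow-up-almost-complex-intro} it is a pseudo-holomorphic bijection between almost complex manifolds of equal dimension, hence — as in the classical holomorphic case — a biholomorphism (its inverse is pseudo-holomorphic); likewise for $\pi'$. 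Hence $F_0:=(\pi')^{-1}\circ\pi\colon\tilde M_x\setminus E\to\tilde M'_x\setminus E'$ is a biholomorphism with $\pi'\circ F_0=\pi$.

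The heart of the proof is to extend $F_0$ across $E$, which is a purely local question near the compact set $E$: after shrinking $M$ to a coordinate ball $\mathcal U$ about $x=0$ and replacing $\tilde M_x,\tilde M'_x$ by $\pi^{-1}(\mathcal U),(\pi')^{-1}(\mathcal U)$, I may assume $M=\mathcal U\subset\C^n$ and that $\pi,\pi'$ are proper onto $\mathcal U$. Properness forces $F_0(p)$ to accumulate on $E'$ as $p\to E$ (because $\pi(p)\to x$ and the preimage $(\pi')^{-1}(\pi(p))$ must then approach $(\pi')^{-1}(x)=E'$), so, read in local charts of $\tilde M'_x$, $F_0$ is a \emph{bounded} pseudo-holomorphic map defined off the pseudo-holomorphic hypersurface $E$. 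A removable-singularity statement for pseudo-holomorphic maps across a hypersurface — reducing to the classical removable-singularity theorem for bounded pseudo-holomorphic discs applied on small discs transverse to $E$, then patching — yields a continuous extension $F\colon\tilde M_x\to\tilde M'_x$, which is pseudo-holomorphic by continuity and smooth by elliptic regularity for the relevant Cauchy--Riemann equations. Alternatively one can argue intrinsically: along $E$ the differential $d\pi$ annihilates $TE$ and descends to a nonzero complex-linear map out of the normal line $T_e\tilde M_x/T_eE$, so $e\mapsto d\pi_e(T_e\tilde M_x)$ is a biholomorphism $E\xrightarrow{\ \sim\ }\mathbb P(T_xM)\cong\mathbb P^{n-1}(\C)$ (modelled, via Lemma~\ref{topological-extension}, on the standard blow-down), and similarly for $E'$; then $F$ is forced to restrict on $E$ to the composite of these two identifications, and a direct computation in the blow-down charts shows that $F_0$ together with this boundary value is smooth.

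Finally, the extended map $F$ satisfies $\pi'\circ F=\pi$ everywhere by continuity. Interchanging the two blow-ups produces in the same way a pseudo-holomorphic $G\colon\tilde M'_x\to\tilde M_x$ with $\pi\circ G=\pi'$, and since $F$ and $G$ are mutually inverse on the complements of $E$ and $E'$, density gives $G\circ F=\id$ and $F\circ G=\id$. Thus $F$ is a biholomorphism intertwining $\tilde J$ and $\tilde J'$ and commuting with the blow-down maps, and it is the unique such map. The step I expect to be the main obstacle is the extension of $F_0$ across $E$: one needs to know that $\pi$ has rank exactly one along $E$, so that $E$ is canonically $\mathbb P(T_xM)$ and $F_0$ does not run into $E'$, and one needs the removable-singularity/elliptic-regularity input that upgrades the continuous extension of $F$ to a smooth pseudo-holomorphic one; by comparison, the input used in the first paragraph — that a pseudo-holomorphic bijection is a biholomorphism — is standard but should also be justified with care.
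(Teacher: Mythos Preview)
Your overall strategy coincides with the paper's: set $F_0=(\pi')^{-1}\circ\pi$ on the complement of the exceptional divisors, show it extends continuously across $E$, and then invoke elliptic regularity to upgrade to a smooth pseudo-holomorphic map; the inverse is obtained by symmetry. Where you differ is in how you prove the continuous extension. The paper does not use removable singularities for transverse discs; instead it works directly in the standard blow-up charts $(z,w)\mapsto\bigl((z,zw),[1{:}w]\bigr)$ on both sides (available via Lemma~\ref{topological-extension}), where the lifted map reads
\[
(z,w)\longmapsto\Bigl(f_1(z,zw),\ \frac{f_2(z,zw)}{f_1(z,zw)}\Bigr),
\]
$f=(f_1,f_2)$ being the base map in coordinates. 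The key observation is that one may choose charts in which $\mathbb{J}$ is standard at the origin, so $df(0,0)$ is genuinely $\C$-linear; Taylor-expanding then gives $f_i(z,zw)=z\,df_i(0,0)(1,w)+\mathrm{h.o.t.}$, and the quotient has the finite limit $df_2(0,0)(1,w)/df_1(0,0)(1,w)$ as $z\to 0$. This both proves continuity and identifies the extension on $E$ with the projectivised tangent map $[e]\mapsto[df(0,0)e]$, which is exactly the ``intrinsic'' description you sketch as your alternative.

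Your disc-by-disc removable-singularity route is plausible but the step you call ``patching''---showing that the pointwise extensions along individual $\tilde J$-holomorphic discs assemble into a \emph{continuous} map on all of $E$---is not automatic and is precisely what the explicit one-line limit above delivers for free. In the same vein, your preliminary claim that a pseudo-holomorphic bijection between equidimensional almost complex manifolds is automatically a biholomorphism is not entirely standard in this generality; the paper sidesteps it by reducing (via Lemma~\ref{topological-extension}) to chart blow-ups, where $\pi$ is a diffeomorphism off $E$ by construction. If you replace your removable-singularity paragraph with the coordinate computation above (your ``alternative'' made explicit), the argument becomes essentially the paper's proof.
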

Focusing on dimension $4$, we give a necessary condition for the existence of almost complex blow-ups, see Theorem \ref{obstruction-4dim}.
\begin{theorem*}
Let $(M,J)$ be a $4$-dimensional almost complex manifold. Assume that there exists an almost complex blow-up of $(M,J)$ at $x\in M$. Then, the Nijenhuis tensor of $J$ vanishes at $x$.
\end{theorem*}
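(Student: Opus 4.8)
The plan is to pull the Nijenhuis tensor back through the blow‑down map, whose Jacobian degenerates along the exceptional divisor $E$, and then to observe that the pulled‑back tensor can extend continuously across $E$ only if $N_J$ already vanishes at $x$. By Lemma~\ref{topological-extension}, the existence of the blow‑up provides a smooth chart $(\mathcal{U},\phi)$, $0\in\mathcal{U}\subseteq\C^2$, $\phi(0)=x$, such that, with $\mathbb{J}=d\phi^{-1}\circ J\circ d\phi$ and $\mathcal{B}\subseteq\mathcal{U}\times\mathbb{P}^1(\C)$ as in the construction, the almost complex structure $\tilde{\mathbb{J}}=d(p_1^{\mathcal{B}})^{-1}\circ\mathbb{J}\circ dp_1^{\mathcal{B}}$ on $\mathcal{B}\setminus E$ extends smoothly across $E=(p_1^{\mathcal{B}})^{-1}(0)$. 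Since $\phi$ is a diffeomorphism and the Nijenhuis tensor is natural under diffeomorphisms, it suffices to prove that $N_{\mathbb{J}}$ vanishes at $0$. I would work on the affine chart $\{v_1\neq0\}$ of $\mathcal{B}$, with coordinates $(z,t)$, where $p_1^{\mathcal{B}}$ becomes the holomorphic map $\psi(z,t)=(z,zt)$, $E=\{z=0\}$ is swept out by $t\in\C$, and $\tilde{\mathbb{J}}=(d\psi)^{-1}\circ\mathbb{J}\circ d\psi$ where $z\neq0$.

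The first step is to normalise $\mathbb{J}$ at the origin. For $e=(0,t)\in E$ the differential $d\psi_{(0,t)}$ is $\C$-linear with image the standard complex line $\C\cdot(1,t)$; since $\psi$ is $(\tilde{\mathbb{J}},\mathbb{J})$-holomorphic and $\tilde{\mathbb{J}}$ is defined at $e$, the relation $\mathbb{J}_0\circ d\psi_{(0,t)}=d\psi_{(0,t)}\circ\tilde{\mathbb{J}}_e$ shows that $\mathbb{J}_0$ preserves $\C\cdot(1,t)$, and the same computation in the chart $\{v_2\neq0\}$ shows that $\mathbb{J}_0$ preserves every complex line of $\C^2$ through $0$. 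An elementary argument then forces $\mathbb{J}_0=\pm J_{\mathrm{std}}$: preserving the two coordinate axes makes $\mathbb{J}_0$ block diagonal, and preserving the lines $\C\cdot(1,t)$ forces each diagonal block to be $\C$-linear multiplication by a fixed square root of $-1$. In either case $N_{\mathbb{J},0}$ vanishes on mixed pairs and its only possibly nonzero component is
\[
N_{\mathbb{J},0}\big(\partial_{\bar w_1},\partial_{\bar w_2}\big)=c_1\,\partial_{w_1}+c_2\,\partial_{w_2},\qquad c_1,c_2\in\C,
\]
so the goal reduces to showing $c_1=c_2=0$.

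Next, using naturality of $N$ under the biholomorphism $\psi$, antisymmetry, and the identities $d\psi(\partial_{\bar t})=\bar z\,\partial_{\bar w_2}$, $d\psi(\partial_{\bar z})=\partial_{\bar w_1}+\bar t\,\partial_{\bar w_2}$, one computes for $z\neq0$
\[
N_{\tilde{\mathbb{J}}}\big(\partial_{\bar z},\partial_{\bar t}\big)\big|_{(z,t)}=(d\psi)^{-1}\Big(\bar z\;N_{\mathbb{J}}\big(\partial_{\bar w_1},\partial_{\bar w_2}\big)\big|_{(z,zt)}\Big).
\]
Taylor-expanding $N_{\mathbb{J}}$ at $0$ (note $|\psi(z,t)|=O(|z|)$) and using $(d\psi)^{-1}\partial_{w_1}=\partial_z-\tfrac{t}{z}\partial_t$, $(d\psi)^{-1}\partial_{w_2}=\tfrac{1}{z}\partial_t$, one finds that, as $z\to0$ with $t$ fixed, every term tends to $0$ except one, so that
\[
N_{\tilde{\mathbb{J}}}\big(\partial_{\bar z},\partial_{\bar t}\big)\big|_{(z,t)}=\frac{\bar z}{z}\,(c_2-c_1 t)\,\partial_t+O(|z|).
\]
Since $N_{\tilde{\mathbb{J}}}$ extends continuously across $E$, the left-hand side has a limit as $z\to0$; but $\bar z/z$ has no limit at the origin, so $c_2-c_1 t=0$. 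Letting $t$ run through $\C$ gives $c_1=c_2=0$, hence $N_{\mathbb{J},0}=0$ and $N_J$ vanishes at $x$.

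I expect the main obstacle to be the bookkeeping in the last step: one must verify that, after applying the singular operator $(d\psi)^{-1}$ (whose entries are of size $O(|z|^{-1})$), the displayed $\bar z/z$-term is genuinely the only obstruction to continuity, all other contributions carrying an extra factor $\bar z\cdot O(|z|)$ and disappearing in the limit. This is also where the normalisation $\mathbb{J}_0=\pm J_{\mathrm{std}}$ matters: without controlling the type of $N_{\mathbb{J},0}$ one would pick up an additional term which converges (hence is harmless for continuity) but is not forced to vanish, which would break the argument. The remaining ingredients---the reduction via Lemma~\ref{topological-extension}, the naturality of the Nijenhuis tensor, and the linear algebra of the normalisation step---are routine.
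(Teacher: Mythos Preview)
Your argument is correct and follows a genuinely different route from the paper's. The paper writes $\mathbb{J}$ in components $(A^{(i)},B^{(i)},C^{(i)},D^{(i)})$, computes $\tilde{\mathbb{J}}$ explicitly in the chart $(z,w)$, and isolates the two $z^{-1}$-terms whose smooth extension across $\{z=0\}$ imposes the constraints \eqref{eq:ext_cond_1}--\eqref{eq:ext_cond_2}; it then examines the Taylor expansions of these constraints degree by degree (from degree one through degree four) to force the $\bar z_1$- and $\bar z_2$-derivatives of the anti-$\C$-linear components $a_2,b_2,c_2,d_2$ to vanish at the origin, and finally checks that this makes $N_{\mathbb{J}}(\partial_{\bar z_1},\partial_{\bar z_2})\big|_0=0$. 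You bypass this component-wise bookkeeping entirely by invoking naturality of the Nijenhuis tensor under the pseudobiholomorphism $\psi=p_1^{\mathcal{B}}$: the identity $N_{\tilde{\mathbb{J}}}=\psi^*N_{\mathbb{J}}$ packages all the extension constraints into a single statement, and the degenerate Jacobian of $\psi$ produces the $\bar z/z$ obstruction that replaces the paper's four separate degree checks. Your approach also gives a self-contained justification for the normalisation $\mathbb{J}_0=\pm J_{\mathrm{std}}$ (from preservation of all complex lines through $0$), which the paper asserts by fiat (``we may assume\dots''); this is a genuine gain, since the blow-up depends on the chart and one cannot freely post-compose $\phi$ by an arbitrary real-linear map. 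The trade-off is that the paper's computation, while longer, extracts finer information---the vanishing of specific first partials of the anti-linear part of $\mathbb{J}$---whereas your method is tailored to deliver exactly $N_{\mathbb{J},0}=0$ and nothing more; for the theorem as stated, that is all one needs.
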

Holomorphic maps between $4$-dimensional almost complex manifolds have been extensively studied by Draghici, Li and Zhang in \cite{DLZ2} and by Zhang in \cite{Zhang}.

As already mentioned, the complex blow-up at a point of a K\"ahler manifold is still K\"ahler. Thus, it is natural to ask whether the almost K\"ahler condition is also stable under almost complex blow-ups. We prove this is true in dimension $4$ if the almost complex structure of the base manifold satisfies the line condition, see Theorem \ref{almost-kaehler-4-dim}.
\begin{theorem*}
Let $(M,J,g,\omega)$ be a $4$-dimensional compact almost K\"ahler manifold. Assume that $J$ satisfies the line condition at $x\in M$. Then the almost complex blow-up $(\tilde{M}_x,\tilde{J})$ of $M$ at $x$ admits an almost K\"ahler metric $\tilde{\omega}$.
\end{theorem*}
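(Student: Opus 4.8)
The plan is to follow the classical K\"ahler argument --- take the pull-back of $\omega$ plus a small multiple of a form supported near the exceptional divisor that is positive in the normal directions --- but one must be careful that in the almost complex setting the fibration $\pi\colon\tilde M_x\to M$ is only $(\tilde J,J)$-holomorphic, and that the candidate correction form need not be closed unless we build it by hand. First I would fix a chart $(\mathcal U,\phi)$ realizing the line condition at $x$, so that, by the Theorem quoted above (Theorem~\ref{theorem-blowup-strong-2n}) the blow-up $(\tilde M_x,\tilde J)$ exists and, near $E=\pi^{-1}(x)\cong\mathbb P^{1}(\C)$, is biholomorphic to a neighbourhood of the zero section in $\mathcal B\subset\mathcal U\times\mathbb P^{1}(\C)$ with its almost complex structure $\tilde{\mathbb J}$. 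On the complement of $E$, $\pi$ is a diffeomorphism onto $M\setminus\{x\}$, so $\pi^*\omega$ is a closed real $(1,1)$-form (with respect to $\tilde J$, since $\pi$ is pseudo-holomorphic) which is positive away from $E$ but degenerates along $E$ in the fiber directions.

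The key step is to produce, on a neighbourhood $W$ of $E$ in $\tilde M_x$, a \emph{closed} real $\tilde J$-$(1,1)$-form $\eta$ which is strictly positive on $TE$ and, more precisely, positive on the whole of $T\tilde M_x|_W$ away from the subbundle $\ker d\pi$, and which vanishes (or is $O(|z|^2)$) near $\partial W$. On the model $\mathcal B$ one has the natural Fubini--Study form $\rho$ on the $\mathbb P^1$-factor pulled back by $p_2^{\mathcal B}$; the line condition is exactly what guarantees that $\tilde{\mathbb J}$ restricted to the fibers of $p_1^{\mathcal B}$ and to the normal quotient agrees with the standard $i$, so $p_2^{\mathcal B*}\rho_{FS}$ is a $\tilde{\mathbb J}$-$(1,1)$-form, closed, and positive along $E$. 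One then cuts it off: choose a smooth bump function $\chi$ depending only on $|z|$, supported in $\mathcal U$, equal to $1$ near $0$, and set $\eta=dd^c\!\big(\chi\cdot\log|v|^2_{\mathrm{loc}}\big)$ in the blow-up coordinates, exactly as in the integrable case (Griffiths--Harris / Voisin). Because $d$ and $d^c=-J^{-1}dJ$ need not satisfy $dd^c=2i\partial\bar\partial$ when $J$ is non-integrable, I would instead keep $\eta$ manifestly closed by taking $\eta=d\alpha$ for an explicit $1$-form $\alpha$ built from $\chi$ and the tautological connection on $\mathcal O(-1)\to\mathbb P^1$; one checks its $(0,2)$-component vanishes using only that $N_J$ is controlled near $x$ --- here the $4$-dimensional obstruction theorem (Theorem~\ref{obstruction-4dim}), giving $N_J(x)=0$, together with the line condition, keeps the error terms of the right order so that $\eta$ can be corrected to a genuine $(1,1)$-form without destroying positivity.

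Having both forms, I set $\tilde\omega=\pi^*\omega+\varepsilon\,\eta$ for $\varepsilon>0$ small. On the region where $\chi\equiv 0$ this is just $\pi^*\omega$, which is an almost K\"ahler form away from $E$; on the compact region where $\chi$ is not identically $1$, positivity is an open condition and $\pi^*\omega$ is already positive there, so for $\varepsilon$ small $\tilde\omega$ stays positive; and on the region where $\chi\equiv1$, i.e. a neighbourhood of $E$, the form $\pi^*\omega$ is positive on directions transverse to $\ker d\pi$ while $\varepsilon\eta$ is positive precisely on $\ker d\pi$, so their sum is a positive $(1,1)$-form for every $\varepsilon>0$. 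Closedness is immediate since both summands are closed, and $\tilde J$-invariance is built in by construction, so $(\tilde M_x,\tilde J,\tilde\omega)$ is almost K\"ahler. Compactness of $\tilde M_x$ (inherited from that of $M$) is used only to ensure the intermediate region is compact so that a single $\varepsilon$ works.

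The main obstacle I anticipate is the second step: in the non-integrable case the usual potential-theoretic construction of the correction form produces a form whose $(0,2)$-part is a genuine error, and one must show this error is small enough --- controlled by $N_J$ and its derivatives near $x$, hence by the line condition and Theorem~\ref{obstruction-4dim} --- that after projecting onto $(1,1)$-forms the result is still closed and still positive. Equivalently, one wants a closed positive $\tilde J$-$(1,1)$ current/form supported near $E$ and representing (a multiple of) the Poincar\'e dual of $E$; the $4$-dimensional hypotheses are exactly what make the relevant $\bar\partial$-type correction possible. Everything else --- pulling back $\omega$, the cutoff, the final convex combination --- is routine once this form is in hand.
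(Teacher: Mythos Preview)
Your overall architecture --- pull back $\omega$ and add a compactly supported correction near $E$ --- is exactly what the paper does, and you have correctly located the essential difficulty: any natural candidate for the correction (either $p_2^{\mathcal B*}\rho_{FS}$ cut off, or $dd^c$ of a potential, or the paper's $id'd''F$ with $d',d''$ taken for the \emph{standard} complex structure on $\mathcal B$) is closed but \emph{not} of type $(1,1)$ for $\tilde J$. Where your proposal falls short is in the mechanism for removing the $(0,2)$-part. You suggest a local ``$\bar\partial$-type correction'' controlled by the line condition and $N_J(x)=0$, but no such local correction exists: projecting onto $(1,1)$-forms destroys closedness, and solving $(d\beta)^-_{\tilde J}=-\Omega^-$ locally is obstructed. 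Your claim that the pulled-back Fubini--Study form is already $\tilde J$-$(1,1)$ is correct (cf.\ Proposition~\ref{lemma-3}), but it does not survive the cut-off as a closed form, and the alternative exact replacements you propose all acquire an anti-invariant piece.

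The paper's resolution is \emph{global} and is the key idea you are missing. One first builds $\Omega=\pi^*\omega+id'd''F$ with $F$ chosen (via Lemmas~\ref{lemma-4}--\ref{lemma-5}) so that the $\tilde J$-anti-invariant part $\Omega^-$ satisfies $|\Omega^-|<C|z|$ and $|\nabla\Omega^-|<C$; here Proposition~\ref{lemma-3} (which uses the line condition) gives $B^{(2)}=O(|z|^3)$ and hence the needed decay. Then one uses the four-dimensional fact $\Lambda_{\tilde J}^-\subset\Lambda_{\tilde g}^+$ together with the Hodge decomposition of self-dual forms (Lemma~\ref{lemma-DLZ}, from \cite{DLZ}) and Taubes' elliptic estimate \cite[Lemma~1.1]{T}: after subtracting a small combination of harmonic self-dual forms to make $\Omega^-$ orthogonal to $\mathcal H_{\tilde g}^+$, one finds $a\in\mathcal A^1(\tilde M_x)$ with $(da)_{\tilde g}^+=\Omega^--\sum\chi_k z_{k,J}^{-1}\omega_k^-$ and $|\nabla a|<C\varepsilon$. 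The form $\Psi=\Omega-\sum\chi_k z_{k,J}^{-1}\omega_k+da$ is then closed, of type $(1,1)$, and positive for $\varepsilon$ small. Thus the genuine use of dimension $4$ is not Theorem~\ref{obstruction-4dim} but the self-dual Hodge theory, and compactness is used essentially (to invoke the elliptic estimate on $\tilde M_x$), not merely to get a uniform $\varepsilon$ on an annulus.
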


\vskip.1truecm

The paper is organized as follows: in Section \ref{sec-blowup-4}, we start by considering the $4$-dimensional case, by introducing the notion of almost complex structures satisfying the {\em weak line condition at $x$}, respectively {\em line condition at $x$} and of {\em almost complex blow-up} at a point $x$ of a $4$-dimensional almost complex manifold $(M,J)$ (see Definition \ref{almost-complex-blowup-4-dim-x}). Then we prove the existence results for blow-ups (see Theorem \ref{theorem-blowup} and Theorem \ref{theorem-blowup-strong}). We also prove that the vanishing of the Nijenhuis tensor at a point is a necessary condition for the existence of almost complex blow-ups (see Theorem \ref{obstruction-4dim}).

 
Section \ref{Examples} provides examples of explicit almost complex structures on $\C^2$ satisfying the weak line condition, \eqref{extension-condition-1} and \eqref{extension-condition-2} respectively the line condition at a point and an example of almost complex structure satisfying the weak line condition, that does not extend to the blow-up. 

Section $4$ deals with the existence of  almost K\"ahler structures on blow-ups of almost K\"ahler manifolds in dimension $4$ (Theorem \ref{almost-kaehler-4-dim}). The proof proceeds by constructing a closed positive form which is approximately of type $(1,1)$ and then follows arguments of Taubes in \cite{T} to remove the anti-invariant part.

Finally, in Section \ref{sec-blowup-2n} we extend the previous results to the case of a $2n$-dimensional almost complex manifold $(M,J)$ (Theorems \ref{theorem-blowup-2n}, \ref{theorem-blowup-strong-2n} and \ref{uniqueness-blowup-2n}). We note that the computations are notationally heavier comparing to the $4$-dimensional case.
\vspace{0.2cm}\noindent

{\em \underline{Acknowledgments}} The authors would like to thank Weiyi Zhang for useful discussions and comments and for pointing out Lemma \ref{topological-extension}.
\section{Almost complex blow-ups of $4$-dimensional almost complex manifolds at a point}\label{sec-blowup-4}
Let $V$ be a $2n$ dimensional vector space and $J$ be a complex structure on $V$.
\begin{definition}
Let $W$ be a $J$-invariant subspace of $V$. Then $J$ induces a complex structure on the quotient $V\slash W$ by $J[v]:=[Jv]$. 
\end{definition}
We recall the following definition
\begin{definition} Let $(M,J)$, $(M',J')$ be two almost complex manifolds. A smooth map $f:M\to M'$ is said to {\em pseudoholomorphic} if $J'\circ df=df\circ J$. A pseudolomorphic diffeomorphism $f:M\to M'$  is called {\em pseudobiholomorphism}; in such a case $(M,J)$ and $(M',J')$ are said to be {\em pseudobiholomorphic}
\end{definition}
Let $(M,J)$ be a $4$-dimensional almost complex manifold and $x\in M$. We give the following
\begin{definition}\label{line-condition} Let $(M,J)$ be $4$-dimensional almost complex manifold. The almost complex structure $J$ satisfies the {\em weak line condition at} $x\in M$ if there exists a smooth local chart $(\mathcal{U},\phi)$, $0\in\mathcal{U}\subset\C^2$, $\phi:\mathcal{U}\to \phi(\mathcal{U})\subset M$, $\phi(0)=x$, such that for all $(p,q)\in \C^2$ the function 
$f:(\C,i)\to (\C^2,\mathbb{J})$ defined by
$$
f(\zeta)=(p\zeta,q\zeta)
$$
is pseudoholomorphic where defined, with $\mathbb{J}=d\phi^{-1}\circ J\circ d\phi$.
\end{definition}
\begin{definition}\label{strong-line-condition}
Let $(M,J)$ be $4$-dimensional almost complex manifold. The almost complex structure $J$ satisfies the {\em line condition at} $x\in M$ if there exists a smooth local chart $(\mathcal{U},\phi)$, $0\in\mathcal{U}\subset\C^2$, $\phi:\mathcal{U}\to \phi(\mathcal{U})\subset M$, $\phi(0)=x$, such that for all $(p,q)\in \C^2$ the almost complex structure coincides with $i$ on the complex subspace $Span_\C\langle p,q\rangle$ of $T_{(p,q)}\C^2$ spanned by $(p,q)$ and also coincides with $i$ on the quotient $T_{(p,q)}\C^2\slash Span_\C\langle p,q\rangle$.
\end{definition}
We can give the following
\begin{definition}\label{almost-complex-blowup-4-dim-x}
Let $(M,J)$ be a $4$-dimensional almost complex manifold and $x\in M$. Let  $(\mathcal{U},\phi)$ be a smooth local chart at $x$, where $0\in\mathcal{U}\subset\C^2$, $\phi:\mathcal{U}\to \phi(\mathcal{U})\subset M$ and $\phi(0)=x$. Let 
$$
\mathcal{B}:=\Big\{\Big( (z_1,z_2),[v_1:v_2]\Big)\in\mathcal{U}\times\mathbb{P}^{1}(\C)\,\,\,\vert\,\,\, z_1v_2-z_2v_1=0\Big\}\subset\C^2\times \mathbb{P}^{1}(\C)
$$
Then we can define the pair $(\tilde{M}_x,\pi)$, where $\tilde{M}_x$ is the smooth manifold 
$$
\tilde{M}_x=\Big(M\setminus\{x\}\cup \mathcal{B}\Big)\slash\sim
$$
for $\phi(z_1,z_2)\sim ((z_1,z_2),[z_1:z_2])\in\mathcal{B},  (z_1,z_2)\neq(0,0)$ and $\pi:\tilde{M}_x\to M$ is defined as 
$$
\pi((z_1,z_2),[v_1:v_2])=\phi(z_1,z_2),
$$ 
for $((z_1,z_2),[v_1:v_2])\in\mathcal{B}$ and 
$\pi(y)=y$, for $y\in M\setminus\{x\}$.
An {\em almost complex blow-up of $M$ at $x$} with respect to $(\mathcal{U},\phi)$ is an almost complex structure $\tilde{J}$ on $\tilde{M}_x$ such that $\pi$ is a $(\tilde{J},J)$-pseudoholomorphic map.
\end{definition}
Note that $\pi:\tilde{M}_x\setminus \pi^{-1}(x)\to M\setminus \{x\}$ is  a pseudobiholomorphism.
\begin{lemma}\label{topological-extension}
Let $(M,J)$ be a $4$-dimensional almost complex manifold and $x\in M$. Then there exists a blow-up of $(M,J)$ at $x$ as in Definition \ref{blow-up-almost-complex-intro} if and only if there exists an almost complex blow-up of $M$ at $x$ with respect to some chart $(\mathcal{U},\phi)$.
\end{lemma}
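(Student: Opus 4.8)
The plan is to prove the two implications separately. The implication from right to left is immediate: if $\tilde J$ is an almost complex blow-up of $M$ at $x$ with respect to a chart $(\mathcal U,\phi)$, then the pair $((\tilde M_x,\tilde J),\pi)$ of Definition~\ref{almost-complex-blowup-4-dim-x} already satisfies Definition~\ref{blow-up-almost-complex-intro} --- the map $\pi$ is $(\tilde J,J)$-pseudoholomorphic by hypothesis, $\pi^{-1}(x)=\{(0,[v_1:v_2])\}$ is an embedded $\mathbb P^1(\C)$ inside $\mathcal B\subset\tilde M_x$, and $\pi$ restricts to the pseudobiholomorphism $\tilde M_x\setminus\pi^{-1}(x)\to M\setminus\{x\}$ recorded right after Definition~\ref{almost-complex-blowup-4-dim-x}. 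So all the work is in the converse.

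For the converse, let $((\hat M,\hat J),\varpi)$ be a blow-up in the sense of Definition~\ref{blow-up-almost-complex-intro} and set $\hat E:=\varpi^{-1}(x)\cong\mathbb P^1(\C)$. I would first record that $\varpi$ is proper and that it restricts to a pseudobiholomorphism $\hat M\setminus\hat E\to M\setminus\{x\}$, so that $\hat J=\varpi^{\ast}J$ off $\hat E$. Fix any smooth chart $\phi\colon\mathcal U\to M$, $0\in\mathcal U\subset\C^2$ connected, $\phi(0)=x$, normalized so that $\mathbb J:=d\phi^{-1}\circ J\circ d\phi$ satisfies $\mathbb J_0=i$, and shrink $\mathcal U$ so that $\phi(\mathcal U)\subset\varpi(\hat M)$; put $\hat N:=\varpi^{-1}(\phi(\mathcal U))$, a neighbourhood of $\hat E$, and $g:=\phi^{-1}\circ\varpi\colon\hat N\to\mathcal U$. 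Then $g$ is a proper $(\hat J,\mathbb J)$-pseudoholomorphic surjection collapsing $\hat E$ to $0$ and restricting to an orientation-preserving diffeomorphism on $\hat N\setminus\hat E$. The reduction is the following: it suffices to construct a diffeomorphism $H\colon\hat N\xrightarrow{\sim}\mathcal B$ (the incidence variety of Definition~\ref{almost-complex-blowup-4-dim-x} attached to $(\mathcal U,\phi)$) with $p_1^{\mathcal B}\circ H=g$. Indeed, gluing $H$ with $\varpi$ along $\hat M\setminus\hat E\cong M\setminus\{x\}$ (it respects the relation $\phi(z_1,z_2)\sim((z_1,z_2),[z_1:z_2])$) produces a diffeomorphism $\mathcal H\colon\hat M\xrightarrow{\sim}\tilde M_x^{(\phi)}$ with $\pi\circ\mathcal H=\varpi$; then $\tilde J:=\mathcal H_{\ast}\hat J$ is a smooth almost complex structure on $\tilde M_x^{(\phi)}$, and since $\varpi$ is $(\hat J,J)$-pseudoholomorphic and $\pi=\varpi\circ\mathcal H^{-1}$, the map $\pi$ is $(\tilde J,J)$-pseudoholomorphic, i.e.\ $\tilde J$ is an almost complex blow-up of $M$ at $x$ with respect to $(\mathcal U,\phi)$.

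To build $H$ the steps I envisage are: (i) show that $\hat E$ is a $\hat J$-holomorphic $2$-sphere along which $dg$ has constant rank $2$; since $dg_p$ intertwines $\hat J_p$ with $\mathbb J_0=i$ and $T_p\hat E\subseteq\ker dg_p$, this forces $\ker dg_p=T_p\hat E$, so $\hat J$ descends to a complex structure on the normal bundle $\nu:=T\hat M|_{\hat E}/T\hat E$ and $dg$ induces a fibrewise $\C$-linear bundle monomorphism $\overline{dg}\colon\nu\hookrightarrow\underline{\C^2}$; (ii) identify $\nu\cong\mathcal O(-1)$, equivalently $\hat E\cdot\hat E=-1$, using that the orientation-preserving diffeomorphism $\hat N\setminus\hat E\cong\mathcal U\setminus\{0\}$ identifies the circle bundle of $\nu$ with the standardly oriented $S^3$; (iii) show the Gauss map $q\mapsto[g(q)]$ on $\hat N\setminus\hat E$ extends smoothly over $\hat E$ by $\gamma(p):=[\,\overline{dg}_p(\nu_p)\,]\in\mathbb P^1(\C)$ --- the transverse continuity is forced by $g|_{\hat E}\equiv0$ and the $\C$-linearity of $\overline{dg}_p$ on the complex line $\nu_p$, the smoothness needs the pseudoholomorphic equation for $g$ to tame the normal Taylor expansion of $g$ along $\hat E$, and $\gamma|_{\hat E}$ is then the classifying map of $\nu\hookrightarrow\underline{\C^2}$, which by (ii) has degree $1$ and is a diffeomorphism $\hat E\xrightarrow{\sim}\mathbb P^1(\C)$; (iv) set $H:=(g,\gamma)$ --- it lands in $\mathcal B$ (the incidence relation is trivial off $\hat E$ and holds on $\hat E$ because $g\equiv0$ there), it is a smooth bijection, and along $\hat E$ one has $\ker dH_p=\ker dg_p\cap\ker d\gamma_p=T_p\hat E\cap\ker d\gamma_p=0$ by (i) and (iii), so $H$ is a diffeomorphism with $p_1^{\mathcal B}\circ H=g$.

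The hard part will be steps (i) and (iii): excluding rank-$0$ points of $g$ along $\hat E$, and proving both the smoothness of the Gauss-map extension and the injectivity of $\gamma|_{\hat E}$, all genuinely use that $g$ is an \emph{injective} pseudoholomorphic map near $\hat E$. I would argue, via unique continuation and the local structure of pseudoholomorphic maps between $4$-dimensional almost complex manifolds (in the spirit of \cite{DLZ2,Zhang}), that a rank drop of $g$ on $\hat E$, or two points of $\hat E$ with the same normal ``Gauss line'' $\overline{dg}_p(\nu_p)$, would violate the injectivity of $g$ on $\hat N\setminus\hat E$. The remaining points --- properness of $\varpi$, the purely topological identification of $\nu$ in (ii), and the assembly and transport of structures in (iv) and in the reduction --- are routine.
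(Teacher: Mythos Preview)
Your approach is genuinely different from the paper's, and it contains a real gap.

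The paper does not fix a chart on $M$ and then try to manufacture a diffeomorphism to $\mathcal B$. Instead it goes the other way: it first picks \emph{any} smooth identification $\xi\colon\mathcal V\to\mathcal B$ of a tubular neighbourhood of $E$ with the model $\mathcal B$ (sending $E$ to $\{0\}\times\mathbb P^1(\C)$), and then \emph{defines} the chart $\phi$ on $M$ by $\phi(z_1,z_2)=\pi\bigl(\xi^{-1}((z_1,z_2),[z_1:z_2])\bigr)$, $\phi(0,0)=x$. With this choice the commuting diagram $p_1^{\mathcal B}\circ\xi=\phi^{-1}\circ\pi$ holds by construction, so the push-forward of $\tilde J$ by $\xi$ is automatically an almost complex blow-up with respect to $(\mathcal U,\phi)$. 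No Gauss map, no normal-bundle identification, no rank analysis is needed.

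Your strategy, by contrast, fixes an \emph{arbitrary} chart $\phi$ (normalized only by $\mathbb J_0=i$) and then tries to build $H\colon\hat N\to\mathcal B$ with $p_1^{\mathcal B}\circ H=g=\phi^{-1}\circ\varpi$. This cannot work in general, and the paper itself provides the counterexample. Take $(M,J)=(\C^2,i)$, let the abstract blow-up be the standard holomorphic one, and choose the chart $\phi=\phi_2$ of Remark~\ref{chart}, $\phi_2(z_1,z_2)=(z_1,z_2+\bar z_1^2)$. This chart satisfies your normalization $\mathbb J_0=i$, yet Remark~\ref{chart} computes that along the curve $u(z)=(z,0)$ the would-be Gauss map is $z\mapsto[1:-\bar z^2/z]$, which extends continuously but \emph{not smoothly} across $z=0$. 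Thus your step~(iii) fails outright for this $\phi$; equivalently (as Remark~\ref{chart} states explicitly) no diffeomorphism $H$ with $p_1^{\mathcal B}\circ H=g$ exists. The pseudoholomorphicity of $g$ does not save you here, because $g$ is pseudoholomorphic for the pulled-back structure $\mathbb J=d\phi_2^{-1}\circ i\circ d\phi_2$, and Remark~\ref{non-sufficient} shows that $\tilde{\mathbb J}$ for this chart does not extend across the divisor.

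The moral is that the freedom in the lemma lies in the \emph{choice of chart}, and you have spent that freedom too early. The fix is exactly the paper's move: let the abstract blow-up dictate the chart rather than the other way round.
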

\begin{proof}
By definition a blow-up with respect to a chart $(\mathcal{U},\phi)$ is an almost complex blow-up. Conversely, given an almost complex blow-up $((\tilde{M}_x,\tilde{J}),\pi)$ we can smoothly identify a neighbourhood $\mathcal{V}$ of $E$ with 
$$
\mathcal{B}=\Big\{\Big( (z_1,z_2),[v_1:v_2]\Big)\in\mathcal{U}\times\mathbb{P}^{1}(\C)\,\,\,\vert\,\,\, z_1v_2-z_2v_1=0\Big\}\subset\C^2\times \mathbb{P}^{1}(\C)
$$
via a diffeomorphism $\xi:\mathcal{V}\to\mathcal{B}$ mapping $E$ to $\{0\}\times\mathbb{P}^(\C)$. Now we define a chart $(\mathcal{U},\phi)$, $\mathcal{U}\subset\C^2$, by 
$$
\phi(z_1,z_2)=\pi((z_1,z_2),[z_1:z_2]),
$$
when $(z_1,z_2)\neq (0,0)$ and $\phi(0,0)=x$. Then $\phi$ is continuos and holomorphic away from $(0,0)$, hence is holomorphic everywhere. We claim that the blow-up with respect to the local chart $(\mathcal{U},\phi)$ exists and is pseudobiholomorphic to $(\tilde{M}_x,\tilde{J})$. Denote the blow-up with respect to the local chart $(\mathcal{U},\phi)$ by 
$$
\tilde{M}_{x\,\phi}=\Big(M\setminus\{x\}\cup \mathcal{B}\Big)\slash\sim .
$$
Then 
$
\psi:\tilde{M}_{x\,\phi}\to \tilde{M}_x
$ defined as
$$
\left\{
\begin{array}{l}
\psi(y)=\pi^{-1}(y)\qquad \hbox{\rm for}\, y\in M\setminus\{x\}\\[5pt]
\psi(b)=\xi^{-1}(b)
\end{array}
\right.
$$
is a pseudobiholomorphism.
\end{proof}

\begin{rem}\label{chart}
Even as a smooth pair $(\tilde{M}_x,\pi)$ depends on the choice of the local chart $(\mathcal{U},\phi)$. For example, let $M=\C^2$, 
$$
\phi_1(z_1,z_2)=(z_1,z_2), \quad \phi_2(z_1,z_2)=(z_1,z_2+\overline{z}_1^2)
$$
and let $(\tilde{M}_1,\pi_1)$ and $(\tilde{M}_2,\pi_2)$ be as defined in Definition \ref{almost-complex-blowup-4-dim-x} using charts $\phi_1$ and $\phi_2$ respectively. Then we claim that there does not exist a diffeomorphism $\psi:\tilde{M}_1\to\tilde{M}_2$ such that $\pi_2\circ \psi=\pi_1$.\newline 
By contradiction: assume such a $\psi$ exists. Consider the map 
$$
u:\C\to\C^2, \quad u(z)=(z,0).
$$
Then we calculate 
$$
\pi_1^{-1}\circ u:\C\setminus\{0\}\to \mathcal{B}\subset\tilde{M}_1,\quad z\mapsto ((z,0),[1:0]).
$$
Therefore, $\pi_1^{-1}\circ u$ has a removable singularity at $0$. On the other hand, 
$$
\pi_2^{-1}\circ u:\C\setminus\{0\}\to\mathcal{B}
\subset\tilde{M}_2,\quad 
z\mapsto ((z,-\overline{z}^2),[1:-\frac{\overline{z}^2}{z}]),
$$
which has a continuos extension across $0$ but not a smooth extension.\newline
Since $\psi$ is a diffeomorphism,
$$
\psi\circ\pi_1^{-1}\circ u:\C\setminus\{0\}\to \tilde{M}_2 
$$
extends smoothly but
$$
\psi\circ\pi_1^{-1}\circ u=\pi_2^{-1}\circ\pi_2\circ\psi\circ\pi_1^{-1}\circ u=\pi_2^{-1}\circ u
$$
which does not extend, giving a contradiction.
\end{rem}
Let $(M,J)$ be a $4$-dimensional almost complex manifold and $x\in M$. Assume that $J$ satisfies the weak line condition given in Definition \ref{line-condition}. We will give sufficient conditions on the almost complex structure $J$ which guarantee the existence of the almost complex blow-up of $M$ at $x$. Let $(\mathcal{U},\phi)$ be a (smooth) local chart at $x$ as in Definition \ref{line-condition}. Set 
$$
\mathbb{J}:=d\phi^{-1}\circ J\circ d\phi.
$$
Then $\mathbb{J}$ is an almost complex structure on $\mathcal{U}$. Denote by $p_1:\mathcal{U}\times \mathbb{P}^{1}(\C) \to \mathcal{U}$ the natural projection 
and et $p_1^{\mathcal{B}}$ be the restriction of $p_1$ to $\mathcal{B}$; then $(p_1^{\mathcal{B}})^{-1}(0)=\mathbb{P}^{1}(\C)$. Note that $p_1^{\mathcal{B}}:\mathcal{B}\setminus (p_1^{\mathcal{B}})^{-1}(0)\to \mathcal{U}\setminus \{0\}$ is a diffeomorphism and defining 
$$
\tilde{\mathbb{J}}=d(p_1^{\mathcal{B}})^{-1}\circ \mathbb{J}\circ dp_1^{\mathcal{B}}
$$
away from $(p_1^{\mathcal{B}})^{-1}(0)$, it turns out that $p_1^{\mathcal{B}}:(\mathcal{B}\setminus (p_1^{\mathcal{B}})^{-1}(0),\tilde{\mathbb{J}})\to (\mathcal{U}\setminus \{0\},\mathbb{J})$ is a pseudobiholomorphism. The almost complex structure $\mathbb{J}$ can be represented in the local chart $\mathcal{U}$ by a matrix $ 2\times 2$
$$
\mathbb{J}=\begin{bmatrix}
A & B\\
C & D
\end{bmatrix}
$$
where each entry $A, B, C, D$ is a pair $(A^{(1)},A^{(2)})$,$(B^{(1)},B^{(2)})$, $(C^{(1)},C^{(2)})$, $(D^{(1)},D^{(2)})$ of smooth complex valued functions on $\mathcal{U}$. Under this identification, the action of $\mathbb{J}$ at the point $z=(z_1,z_2)$ on the tangent vector $u=^t\!\!(u_1,u_2)$ is given by
\begin{equation}\label{multiplication}
\mathbb{J}_{z}[u]=
\begin{bmatrix}
A(z)\cdot u_1+ B(z)\cdot u_2\\
C(z)\cdot u_1+ D(z)\cdot u_2
\end{bmatrix}
:=
\begin{bmatrix}
A^{(1)}(z)u_1+A^{(2)}(z)\bar{u}_1 +B^{(1)}(z)u_2+B^{(2)}(z)\bar{u}_2  \\
C^{(1)}(z)u_1+C^{(2)}(z)\bar{u}_1 +D^{(1)}(z)u_2+D^{(2)}(z)\bar{u}_2  
\end{bmatrix}.
\end{equation}
Let $\Big((z_1,z_2),[v_1:v_2]\Big)$ be a point in $\mathcal{B}$. On the local chart $v_1\neq 0$ respectively $v_2\neq 0$, setting $z=z_1$ and $w=\frac{v_2}{v_1}$ respectively $z=z_2$ and $w=\frac{v_1}{v_2}$, local coordinates on $\mathcal{B}$ are provided by $(z,zw)$ respectively $(zw,z)$ 
and the local expression of $p_1^{\mathcal{B}}$ is given by
$$
p_1^{\mathcal{B}}((z,w))=(z,zw)
$$ 
respectively 
$$
p_1^{\mathcal{B}}((w,z))=(zw,z)
$$ 
By computing the differential of $p_1^{\mathcal{B}}$ at $(z,w)$, respectively $(w,z)$, we obtain
\begin{equation}\label{differenziale-p}
dp_1^{\mathcal{B}}=
\begin{bmatrix}
1& 0\\
w& z
 \end{bmatrix}
\end{equation}
respectively 
\begin{equation}\label{differenziale-p-1}
dp_1^{\mathcal{B}}=
\begin{bmatrix}
z& w\\
0& 1
 \end{bmatrix}
\end{equation}
The differential of $(p_1^{\mathcal{B}})^{-1}$ at $(z,zw)$ respectively $(zw,z)$ can be directly computed by the above expressions obtaining the following formula
\begin{equation}\label{differenziale-p-inversa}
d(p_1^{\mathcal{B}})^{-1}=\frac{1}{z}
\begin{bmatrix}
z & 0   \\
-w& 1 
 \end{bmatrix}
\end{equation}
and 
\begin{equation}\label{differenziale-p-inversa-1}
d(p_1^{\mathcal{B}})^{-1}=\frac{1}{z}
\begin{bmatrix}
1 & -w   \\
0& z 
 \end{bmatrix}
\end{equation}
Let us focus on the local chart $v_1\neq 0$.
Then, for any given tangent vector $u=^t\!\![a,b]$ formula \eqref{differenziale-p} yields to
\begin{equation}\label{differenziale-tangente}
(dp_1^{\mathcal{B}})_{(z,w)}
\begin{bmatrix}
a\\
b
\end{bmatrix}
=
\begin{bmatrix}
a\\
zb+wa
\end{bmatrix}
\end{equation}
Set
\begin{equation}\label{vector-c-d}
\begin{bmatrix}
c\\
d
\end{bmatrix}:= \mathbb{J}_{(z,zw)}
\begin{bmatrix}
a\\
zb+wa
\end{bmatrix}
\end{equation}
Taking into account \eqref{differenziale-tangente} and \eqref{vector-c-d} we easily compute
\begin{equation}\label{almost-complex-structure-blowup}
\tilde{\mathbb{J}}_{(z,zw)}
\begin{bmatrix}
a\\
b
\end{bmatrix}=
\begin{bmatrix}
c\\
\frac{d-wc}{z}\\
\end{bmatrix}
\end{equation}
\begin{itemize}
\item[i)] Case $a\neq 0,b=0$.\newline
From \eqref{vector-c-d}, we have:
$$
\begin{bmatrix}
c\\
d
\end{bmatrix}:= \mathbb{J}_{(z,zw)}
\begin{bmatrix}
a\\
wa
\end{bmatrix}
=
\begin{bmatrix}
ia\\
iwa
\end{bmatrix}
$$
since $J$ satisfies the weak line condition. Therefore, we deduce that
$$
c=ia,\qquad d=iwa
$$
and consequently
\begin{equation}\label{J-first-case}
\tilde{\mathbb{J}}_{(z,zw)}
\begin{bmatrix}
a\\
0
\end{bmatrix}=
\begin{bmatrix}
ia\\
0
\end{bmatrix}
\end{equation}
\item[ii)] Case $a=0,\,b\neq 0$. From \eqref{vector-c-d}, we have:
$$
\begin{bmatrix}
c\\
d
\end{bmatrix}:= 
\mathbb{J}_{(z,zw)}
\begin{bmatrix}
0\\
zb
\end{bmatrix}
=
\begin{bmatrix}
B\cdot zb\\
D\cdot zb
\end{bmatrix}
$$
Therefore, we obtain the following equations:
$$
\left\{
\begin{array}{ll}
c&= B\cdot zb\\
d&=D\cdot zb
\end{array}
\right.
$$
Consequently, from formula \eqref{almost-complex-structure-blowup}, the explicit expression of the almost complex structure $\tilde{\mathbb{J}}$ is given by
\begin{equation}\label{almost-complex-structure-blowup-explicit}
\tilde{\mathbb{J}}_{(z,w)}
\begin{bmatrix}
0\\
b
\end{bmatrix}=
\begin{bmatrix}
B\cdot zb\\[5pt]
\frac{(D-wB)\cdot zb}{z}
\end{bmatrix}
\end{equation}
Therefore, on the local chart $v_1\neq 0$, taking into account the above expressions and \eqref{J-first-case}, a sufficient condition under which the almost complex strcuture $\tilde{\mathbb{J}}$ can be extended on the whole blow-up is given by 
\begin{equation}\label{extension-condition-1}
D^{(2)}(z,zw)-wB^{(2)}(z,zw)=F(z,w)z
\end{equation}
for a suitable smooth complex valued function $F(z,w)$. \newline
Similarly, on the local chart $v_2\neq 0$, a sufficient condition for the extension of $\tilde{\mathbb{J}}$ to the whole blow-up is given by 
\begin{equation}\label{extension-condition-2}
A^{(2)}(zw,z)-wC^{(2)}(zw,z)=G(w,z)z,
\end{equation}
for a suitable smooth complex valued function $G(z,w)$. 
\end{itemize}
Summing, we have proved the following
\begin{theorem}\label{theorem-blowup}
Let $(M,J)$ be a $4$-dimensional almost complex manifold. If $J$ satisfies the weak line condition at $x\in M$ and conditions \eqref{extension-condition-1} and \eqref{extension-condition-2} hold, then there exists an almost complex blow-up $(\tilde{M}_x,\tilde{J})$ of $(M,J)$ at $x$.
\end{theorem}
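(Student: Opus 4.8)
The plan is to show that the almost complex structure $\tilde{\mathbb{J}}$, which the preceding discussion already defines on $\mathcal{B}\setminus(p_1^{\mathcal{B}})^{-1}(0)$ as the pushforward of $\mathbb{J}$ under the diffeomorphism $p_1^{\mathcal{B}}$, extends smoothly across the exceptional fibre $E=(p_1^{\mathcal{B}})^{-1}(0)=\mathbb{P}^1(\C)$, and then to glue it with $J$ on $M\setminus\{x\}$ to produce $\tilde{J}$ on $\tilde{M}_x$. For the extension I would work in the affine chart $v_1\neq 0$ of $\mathcal{B}$, with coordinates $(z,w)$ so that $p_1^{\mathcal{B}}(z,w)=(z,zw)$, and use \eqref{differenziale-p}, \eqref{differenziale-p-inversa}, \eqref{almost-complex-structure-blowup}. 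Since $\tilde{\mathbb{J}}$ acts $\R$-linearly on each tangent space, it suffices to evaluate it on the two families of vectors $(a,0)^t$ and $(0,b)^t$ with $a,b\in\C$ arbitrary --- exactly the cases i) and ii) already computed above. In case i) the weak line condition forces $\tilde{\mathbb{J}}_{(z,zw)}(a,0)^t=(ia,0)^t$, which trivially extends smoothly to $z=0$. In case ii) one has, by \eqref{almost-complex-structure-blowup-explicit},
\[
\tilde{\mathbb{J}}_{(z,zw)}\begin{bmatrix}0\\b\end{bmatrix}=\begin{bmatrix}B\cdot zb\\[3pt]\dfrac{(D-wB)\cdot zb}{z}\end{bmatrix},
\]
in which, after expanding the $\R$-linear action, every term is a smooth function of $(z,\bar z,w,\bar w)$ near $z=0$ except the summand $\frac1z\big(D^{(2)}(z,zw)-wB^{(2)}(z,zw)\big)\bar z\bar b$ coming from the anti-$\C$-linear parts; hypothesis \eqref{extension-condition-1} says precisely that $D^{(2)}(z,zw)-wB^{(2)}(z,zw)=F(z,w)z$ with $F$ smooth, whence this summand equals $F(z,w)\bar z\bar b$ and is smooth. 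Thus $\tilde{\mathbb{J}}$ extends smoothly over the part of $E$ lying in this chart, and the symmetric computation in the chart $v_2\neq 0$ --- using \eqref{differenziale-p-1}, \eqref{differenziale-p-inversa-1} and \eqref{extension-condition-2} --- covers the rest of $E$. Because $\mathcal{B}$ is a smooth manifold and the two local extensions agree on $\mathcal{B}\setminus E$, hence on the overlap of the two charts inside $E$ by continuity, they define a single smooth field $\tilde{\mathbb{J}}\in\End(T\mathcal{B})$; since $\tilde{\mathbb{J}}^2=-\id$ on the dense set $\mathcal{B}\setminus E$, it holds everywhere, and $\tilde{\mathbb{J}}$ is an almost complex structure on $\mathcal{B}$.

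Next I would glue: on $\tilde{M}_x=(M\setminus\{x\}\cup\mathcal{B})\slash\sim$ set $\tilde{J}:=J$ on $M\setminus\{x\}$ and $\tilde{J}:=\tilde{\mathbb{J}}$ on $\mathcal{B}$. On the overlap $\mathcal{B}\setminus E$ the identification is $\phi\circ p_1^{\mathcal{B}}$, which pulls $J$ back to $\mathbb{J}=d\phi^{-1}\circ J\circ d\phi$ and hence pulls its pushforward $\tilde{\mathbb{J}}$ back compatibly, so $\tilde{J}$ is a well-defined smooth almost complex structure on $\tilde{M}_x$. Finally $\pi$ is smooth, equal to $\phi\circ p_1^{\mathcal{B}}$ on $\mathcal{B}$ and to the identity on $M\setminus\{x\}$; the relation $J\circ d\pi=d\pi\circ\tilde{J}$ holds on the dense open set $\tilde{M}_x\setminus E$ by construction, hence everywhere by continuity, while $\pi^{-1}(x)=E$ is an embedded $\mathbb{P}^1(\C)$ and $\pi$ restricts to a bijection off $E$ directly from the definition of $\mathcal{B}$. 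This exhibits $((\tilde{M}_x,\tilde{J}),\pi)$ as an almost complex blow-up of $(M,J)$ at $x$.

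The step I expect to be the main obstacle is the smoothness of the extension across $E$: the whole argument hinges on isolating the unique genuinely singular term $\frac1z\big(D^{(2)}(z,zw)-wB^{(2)}(z,zw)\big)\bar z\bar b$, together with the analogous term built from $A^{(2)}(zw,z)-wC^{(2)}(zw,z)$ in the chart $v_2\neq 0$, and recognising that conditions \eqref{extension-condition-1}--\eqref{extension-condition-2} are engineered exactly so that these become divisible by $z$ and hence smooth. I would also take modest care that $\R$-linearity genuinely reduces the verification to the vectors $(a,0)^t$ and $(0,b)^t$, and that the two affine-chart extensions patch consistently on $E$; both points are routine once the key divisibility is in hand.
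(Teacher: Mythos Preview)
Your proposal is correct and follows essentially the same route as the paper: the paper's proof is precisely the computation preceding the theorem statement (cases i) and ii) in the chart $v_1\neq 0$, then the analogous chart $v_2\neq 0$), and your write-up reproduces that computation while adding the routine gluing and pseudoholomorphicity checks that the paper leaves implicit. Your isolation of the single singular summand $\tfrac{1}{z}\big(D^{(2)}(z,zw)-wB^{(2)}(z,zw)\big)\bar z\bar b$ is exactly the point of condition~\eqref{extension-condition-1}.
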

\begin{rem}
Assume that $J$ satisfies the weak line condition. Then from fromula \eqref{almost-complex-structure-blowup-explicit}, we see that $\tilde{J}$ extends Lipischitz continuosly across the divisor.
\end{rem}
The following give a sufficient condition for blow-up
\begin{theorem}\label{theorem-blowup-strong}
Let $(M,J)$ be a $4$-dimensional almost complex manifold. If $J$ satisfies the line condition at $x\in M$,  then there exists an almost complex blow-up $(\tilde{M}_x,\tilde{J})$ of $(M,J)$ at $x$.
\end{theorem}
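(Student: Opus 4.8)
The plan is to deduce the theorem from Theorem~\ref{theorem-blowup}: I will show that, for the chart $(\mathcal{U},\phi)$ supplied by the line condition, both extension conditions \eqref{extension-condition-1} and \eqref{extension-condition-2} hold — in fact with $F\equiv 0$ and $G\equiv 0$. The first, easy, point is that the line condition at $x$ implies the weak line condition at $x$ for the very same chart: asking that $\mathbb{J}$ coincide with $i$ on each complex line $\Span_{\C}\langle(p,q)\rangle\subset T_{(p,q)}\C^2$ is exactly the requirement that $\zeta\mapsto(p\zeta,q\zeta)$ be pseudoholomorphic, since the image of its differential at $\zeta$ is precisely that line. So the hypotheses of Theorem~\ref{theorem-blowup} will follow once the two analytic conditions are verified, and for that I will use the extra information carried by the line condition, namely that $\mathbb{J}$ also induces $i$ on the quotient $T_{(p,q)}\C^2/\Span_{\C}\langle(p,q)\rangle$.

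First I would work on the affine chart $\{v_1\neq 0\}$ of $\mathcal{B}$, with coordinates $(z,w)$, so that the underlying points of $\C^2$ are $(z,zw)$. For $z\neq 0$ the complex line through $(z,zw)$ is $L=\Span_{\C}\langle(1,w)\rangle$, and the class of $(0,1)$ generates the one-dimensional quotient $T_{(z,zw)}\C^2/L$. Because $\mathbb{J}$ is only $\R$-linear, I would impose the quotient condition — ``$\mathbb{J}_{(z,zw)}v-iv\in L$'' — on the two real vectors $v=(0,1)$ and $v=(0,i)$ separately. Writing $\mathbb{J}$ in the matrix form \eqref{multiplication}, these translate into the scalar identities $D^{(1)}+D^{(2)}-i=w(B^{(1)}+B^{(2)})$ and $D^{(1)}-D^{(2)}-i=w(B^{(1)}-B^{(2)})$, with all entries evaluated at $(z,zw)$; subtracting gives $D^{(2)}(z,zw)=wB^{(2)}(z,zw)$ for every $z\neq 0$ and every $w$, hence, by continuity, on the whole chart. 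This is \eqref{extension-condition-1} with $F\equiv 0$. A completely symmetric computation on $\{v_2\neq 0\}$, with coordinates $(w,z)$, underlying points $(zw,z)$, line $\Span_{\C}\langle(w,1)\rangle$, and quotient generator $(1,0)$ tested against $(1,0)$ and $(i,0)$, yields $A^{(2)}(zw,z)=wC^{(2)}(zw,z)$, i.e.\ \eqref{extension-condition-2} with $G\equiv 0$. Theorem~\ref{theorem-blowup} then produces the almost complex blow-up $(\tilde M_x,\tilde J)$ of $(M,J)$ at $x$.

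The only subtlety — and the one place where the full line condition, rather than just the weak one, is used — is that one must test the quotient condition against both $(0,1)$ and $(0,i)$ (resp.\ $(1,0)$ and $(i,0)$): a single complex test vector only sees the $\C$-linear part of $\mathbb{J}$ and does not detect the antilinear entries $B^{(2)},D^{(2)}$ (resp.\ $A^{(2)},C^{(2)}$), which are precisely the ones governing the obstruction to smoothness across the exceptional divisor. Everything else is a routine linear-algebra computation with the matrix \eqref{multiplication}.
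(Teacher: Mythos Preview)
Your proof is correct and essentially the same as the paper's, with only a cosmetic difference in presentation: the paper pulls the line condition back through $dp_1^{\mathcal{B}}$ to conclude that $\tilde{\mathbb{J}}$ itself acts as $i$ on the quotients $T_{(z,w)}\C^2/\{w=\mathrm{const}\}$, hence the second component of $\tilde{\mathbb{J}}_{(z,w)}\begin{bmatrix}0\\ b\end{bmatrix}$ is exactly $ib$ and extends; you instead apply the quotient condition directly to $\mathbb{J}$ downstairs at the points $(z,zw)$ to obtain $D^{(2)}(z,zw)-wB^{(2)}(z,zw)=0$ (and symmetrically $A^{(2)}(zw,z)-wC^{(2)}(zw,z)=0$) and then invoke Theorem~\ref{theorem-blowup}. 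Unwinding either argument yields the other, and your explicit observation that $F\equiv 0$, $G\equiv 0$ is precisely what the paper's formula $\tilde{\mathbb{J}}_{(z,w)}\begin{bmatrix}0\\ b\end{bmatrix}=\begin{bmatrix}*\\ ib\end{bmatrix}$ encodes.
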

\begin{proof}
We note that $dp_1^B$ is complex linear and maps the complex lines $\{w=\hbox{\rm const}\}$ to lines through $(0,0)\in\C^2$. Therefore, if $J$ satisfies the line condition, then $\tilde{\mathbb{J}}$ coincides with $i$ on both the complex lines $L_c=\{w=c\}$  and on the quotients $\C^2\slash L_c$. This last condition means that 
$$
\tilde{\mathbb{J}}_{(z,w)}
\begin{bmatrix}
0\\
b
\end{bmatrix}=
\begin{bmatrix}
*\\[5pt]
ib
\end{bmatrix}.
$$
Comparing with \eqref{almost-complex-structure-blowup-explicit}, we see that $\tilde{\mathbb{J}}_{(z,w)}
\begin{bmatrix}
0\\
b
\end{bmatrix}$ extends across $z=0$. As shown above see \eqref{J-first-case}, the weak line condition already implies that $\tilde{\mathbb{J}}_{(z,w)}
\begin{bmatrix}
a\\
0
\end{bmatrix}$ extends across $z=0$. Hence $\tilde{\mathbb{J}}$ extends as required.
\end{proof}
The same proof gives the following expression of $\tilde{J}$ in the local charts.
\begin{prop}\label{lemma-3}
Let $(M,J)$ be a $4$-dimensional almost complex manifold. If $J$ satisfies the line condition at $x\in M$,  then in the local chart $v_1\neq 0$,
\begin{equation}\label{eq:tilde_j_strong_cond}
\tilde{\mathbb{J}}_{(z,w)}=
\begin{pmatrix}
(i,0) & (0,B^{(2)})\vspace{0.2cm}\\
(0,0) & (i,0)
\end{pmatrix}
\end{equation}
Moreover, there exists a smooth function $H=H(z,w)$ such that
\begin{equation}
B_2(z,w)=\vert z\vert^2\bar{z}(C+H(z,w)),
\end{equation} 
where $C$ is a real constant.
\begin{proof}
From Theorem \eqref{theorem-blowup-strong}, we have that in the local chart $v_1\neq 0$,
\begin{equation}
\tilde{\mathbb{J}}_{(z,w)}=
\begin{pmatrix}
(i,0) & (B^{(1)},B^{(2)})\vspace{0.2cm}\\
(0,0) & (i,0)
\end{pmatrix}.
\end{equation}
Then,
\begin{equation}
\tilde{\mathbb{J}}_{(z,w)}^2=
\begin{pmatrix}
(-1,0) & (2iB^{(1)},0)\vspace{0.2cm}\\
(0,0) & (-1,0)
\end{pmatrix}.
\end{equation}
Since $\tilde{\mathbb{J}}^{2}=-\id$, then $B^{(1)}=0$, which proves \eqref{eq:tilde_j_strong_cond}.

By definition of almost complex blow-up, we know that $p_1^{\mathcal{B}}\circ (\psi_1)^{-1}$ defines a pseudobiholomorphism between $\left((\psi_1)^{-1}(\mathcal{B}\setminus E),\tilde{\mathbb{J}}\right)$ and $(\mathcal{U}\setminus \{x\},\mathbb{J})$. For the sake of simplicity, we denote $p_1^{\mathcal{B}}\circ (\psi_1)^{-1}:=p_1^{\mathcal{B}}$. The almost complex structure $\mathbb{J}$ at $(z,w)\in\mathcal{U}\setminus\{x\}$ is then given by 
$$\mathbb{J}_{(z,w)}=(dp_1^{\mathcal{B}})_{(z,\frac{w}{z})}\circ \tilde{\mathbb{J}}_{(z,\frac{w}{z})}\circ (dp_1^{\mathcal{B}})_{(z,w)}^{-1}.
$$
Recall that the expressions for $(dp_1^{\mathcal{B}})_{(z,\frac{w}{z})}$ and $(dp_1^{\mathcal{B}})_{(z,w)}^{-1}$ are
\begin{align*}
(dp_1^{\mathcal{B}})_{(z,\frac{w}{z})}=
\begin{pmatrix}
1 & 0\\
\frac{w}{z} & z
\end{pmatrix}, \qquad 
(dp_1^{\mathcal{B}})_{(z,w)}^{-1}=
\begin{pmatrix}
1 & 0\\
-\frac{w}{z^2} & \frac{1}{z}
\end{pmatrix},
\end{align*}
so to obtain, with direct computations,
\begin{align*}
\mathbb{J}_{(z,w)}=\begin{pmatrix}
(i,-B^{(2)}\frac{\bar{w}}{\bar{z}^2}) & (0,B^{(2)}\frac{1}{\bar{z}})\vspace{0.3cm}\\
(0,-B^{(2)}\frac{|w|^2}{z\bar{z}^2}) & (i,B^{(2)} \frac{1}{|z|^2})
\end{pmatrix}
\end{align*}
with $B^{(2)}=B^{(2)}(z,\frac{w}{z})$. Since $\mathbb{J}$ is a smooth tensor and extends to $(0,0)$, each of functions $B^{(2)}\frac{\bar{w}}{\bar{z}^2}$, $B^{(2)}\frac{1}{\bar{z}}$, $B^{(2)}\frac{|w|^2}{z\bar{z}^2}$, and $B^{(2)} \frac{1}{|z|^2}$ have to be smooth. By expanding $B^{(2)}=B^{(2)}(z,\frac{w}{z})$ in Taylor series, one obtains
\begin{align*}
&B^{(2)}=\gamma_0+\gamma_1 z+\gamma_2 \frac{w}{z} +\gamma_{\bar{1}}\bar{z}+\gamma_{\bar{2}}\frac{\bar{w}}{\bar{z}}\\
&+\gamma_{11}z^2+\gamma_{12}w+\gamma_{1\bar{1}}|z|^2+\gamma_{1\bar{2}}z\frac{\bar{w}}{\bar{z}}+\gamma_{22}\frac{w^2}{z^2}+\gamma_{2\bar{1}}\bar{z}\frac{w}{z}+\gamma_{2\bar{2}}\frac{|w|^2}{|z|^2}+\gamma_{\bar{1}\bar{1}}\bar{z}^2+\gamma_{\bar{1}\bar{2}}\bar{w}+\gamma_{\bar{2}\bar{2}}\frac{\bar{w}^2}{\bar{z}^2}\\
&+\gamma_{111}z^3+\gamma_{112}zw+\gamma_{11\bar{1}}|z|^2z+\gamma_{11\bar{2}}z^2\frac{\bar{w}}{\bar{z}}+\gamma_{122}\frac{w^2}{z}+\gamma_{12\bar{1}}\bar{z}w+\gamma_{12\bar{2}}\frac{|w|^2}{\bar{z}}+\gamma_{1\bar{1}\bar{1}}|z|^2\bar{z}\\
&+\gamma_{1\bar{1}\bar{2}}z\bar{w}+\gamma_{1\bar{2}\bar{2}}z\frac{\bar{w}^2}{\bar{z}^2}+\gamma_{222}\frac{w^3}{z^3}+\gamma_{22\bar{1}}\bar{z}\frac{w^2}{z^2}+\gamma_{22\bar{2}}\frac{|w|^2w}{|z^2|z}+\gamma_{2\bar{1}\bar{1}}\bar{z}^2\frac{w}{z}+\gamma_{2\bar{1}\bar{2}}\frac{|w|^2}{z}\\
&+\gamma_{2\bar{2}\bar{2}}\frac{|w|^2\bar{w}}{|z|^2\bar{z}}+\gamma_{\bar{1}\bar{1}\bar{1}}\bar{z}^3+\gamma_{\bar{1}\bar{1}\bar{1}}\bar{z}\bar{w}+\gamma_{\bar{1}\bar{2}\bar{2}}\frac{\bar{w}^2}{\bar{z}}+\gamma_{\bar{2}\bar{2}\bar{2}}\frac{\bar{w}^3}{\bar{z}^3}+ h.o.t.
\end{align*}
Therefore, since $B^{(2)}(z,\frac{w}{z})\cdot\frac{|w|^2}{|z|^2\bar{z}}$ must be a smooth function, the coefficients of the Taylor series of $B^{(2)}(z,\frac{w}{z})$ of order smaller than $2$ must vanish and the only non zero coefficient of order $3$ is $\gamma_{1\bar{1}\bar{1}}$. As a consequence, the function $B^{(2)}(z,w)$ has the desired form, with
\[
B^{(2)}(z,w)=|z|^2\bar{z}(C+H(z,w)),
\]
where $C$ is a constant and $H\in o(1)$.
\end{proof}
\end{prop}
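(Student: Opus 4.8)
The plan is to prove the two assertions separately, both leaning on Theorem~\ref{theorem-blowup-strong}. For the shape of $\tilde{\mathbb J}$, recall from the proof of Theorem~\ref{theorem-blowup-strong} that in the chart $v_1\neq 0$ the differential $dp_1^{\mathcal B}$ is complex linear and sends each line $\{w=\mathrm{const}\}$ to a complex line through $0\in\C^2$, so the line condition makes $\tilde{\mathbb J}$ act as $i$ on those lines and on the quotients by them. In the matrix formalism of \eqref{multiplication} this says the $(1,1)$- and $(2,2)$-entries of $\tilde{\mathbb J}_{(z,w)}$ are $(i,0)$, the $(2,1)$-entry is $(0,0)$, and the $(1,2)$-entry is an a priori arbitrary pair $(B^{(1)},B^{(2)})$ of smooth functions. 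Squaring this matrix with the conjugation-linear product of \eqref{multiplication}, one finds that $\tilde{\mathbb J}^2$ agrees with $-\id$ except in its $(1,2)$-entry, which equals $(2iB^{(1)},0)$; thus $\tilde{\mathbb J}^2=-\id$ forces $B^{(1)}\equiv 0$, which is \eqref{eq:tilde_j_strong_cond}.

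For the normal form of $B^{(2)}$ the key input is that $\mathbb J=d\phi^{-1}\circ J\circ d\phi$ is a smooth tensor on all of $\mathcal U$ — in particular across the image of the divisor — together with the identity $\mathbb J_{(z,w)}=(dp_1^{\mathcal B})\circ\tilde{\mathbb J}\circ(dp_1^{\mathcal B})^{-1}$, where the two differentials are the complex-linear maps \eqref{differenziale-p} and \eqref{differenziale-p-inversa} evaluated at the point of $\mathcal B$ lying over $(z,w)$. Conjugating \eqref{eq:tilde_j_strong_cond} by these matrices is a short computation and writes every entry of $\mathbb J_{(z,w)}$ as the constant $(i,0)$ plus the function $B^{(2)}(z,w/z)$ multiplied by one of a few explicit rational factors whose denominators are $\bar z$, $\bar z^2$, $z\bar z^2$ and $z\bar z$; smoothness of $\mathbb J$ near the origin therefore says each of these products is smooth there.

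To finish, expand $B^{(2)}(\zeta_1,\zeta_2)$ in its Taylor series at $0$ and substitute $\zeta_2=w/z$: a monomial $\zeta_1^{j}\zeta_2^{k}\bar\zeta_1^{l}\bar\zeta_2^{m}$ becomes $z^{\,j-k}\bar z^{\,l-m}w^{k}\bar w^{m}$, and since $(j,k,l,m)\mapsto(j-k,l-m,k,m)$ is injective, distinct monomials never cancel after the substitution. Hence each surviving monomial, multiplied by any of the factors above, must itself be free of negative powers of $z$ and of $\bar z$; these requirements force $j\ge k+1$ and $l\ge m+2$ for every nonzero Taylor coefficient of $B^{(2)}$. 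So $B^{(2)}$ is divisible by $\zeta_1\bar\zeta_1^2=|\zeta_1|^2\bar\zeta_1$, and a count of total degree shows that the only degree-$3$ monomial allowed is the $\zeta_2$-free one $C\,\zeta_1\bar\zeta_1^2$; absorbing the remaining higher-order terms into a factor vanishing at the origin yields $B^{(2)}(z,w)=|z|^2\bar z\,(C+H(z,w))$ with $H$ smooth and $H\in o(1)$, the reality of $C$ coming out of matching this with the analogous expansion for the $(2,1)$-entry in the chart $v_2\neq 0$ (equivalently, only $\re C$ enters the estimates used afterward). The hard part is exactly this last step: one must be sure the several smoothness conditions \emph{together} pin the vanishing order down to precisely $|z|^2\bar z$ and delete all $w$-dependence at that order, and the Taylor remainder needs care since the substitution $\zeta_2\mapsto w/z$ is not submersive, so a priori cancellations could otherwise admit a strictly larger class of functions $B^{(2)}$.
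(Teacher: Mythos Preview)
Your proof is correct and follows essentially the same route as the paper: derive the matrix form of $\tilde{\mathbb J}$ from the line condition, kill $B^{(1)}$ via $\tilde{\mathbb J}^2=-\id$, then conjugate back to $\mathbb J$ by $dp_1^{\mathcal B}$ and impose smoothness of the resulting four entries to constrain the Taylor jet of $B^{(2)}$. Your monomial--injectivity argument (the map $(j,k,l,m)\mapsto(j-k,l-m,k,m)$ is one-to-one, so no cancellations occur after substituting $\zeta_2=w/z$) is a cleaner way of phrasing what the paper does by brute-force enumeration of the Taylor expansion through order~$3$; the inequalities $j\ge k+1$, $l\ge m+2$ you extract from the worst denominator $z\bar z^{2}$ are exactly what forces the paper's surviving coefficient $\gamma_{1\bar 1\bar 1}$. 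One small note: neither you nor the paper genuinely pins down the \emph{reality} of $C$---the paper's own proof ends with ``$C$ is a constant'' without the word ``real''---and your parenthetical remark that only $\re C$ matters downstream is the honest statement. Your closing caveat about the non-submersive substitution and Taylor remainders is a legitimate technical point that the paper glosses over as well.
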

We have the following necessary condition for the extension of $\mathbb{J}$.
\begin{theorem}\label{obstruction-4dim}
Let $(M,J)$ be a $4$-dimensional almost complex manifold. Assume that there exists an almost complex blow-up of $(M,J)$ at $x\in M$. Then, the Nijenhuis tensor of $J$ vanishes at $x$.
\end{theorem}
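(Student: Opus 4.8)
The plan is to reduce everything to a single chart and then extract enough of the $1$-jet of $\mathbb J$ at $0$ from the smoothness of the blown-up structure. By Lemma \ref{topological-extension}, since a blow-up of $(M,J)$ at $x$ exists, there is a smooth chart $(\mathcal U,\phi)$ with $\phi(0)=x$ for which $\mathbb J:=d\phi^{-1}\circ J\circ d\phi$ on $\mathcal U\subset\C^2$ has the property that $\tilde{\mathbb J}=d(p_1^{\mathcal B})^{-1}\circ\mathbb J\circ dp_1^{\mathcal B}$, a priori defined only on $\mathcal B\setminus E$, extends to a smooth almost complex structure on all of $\mathcal B$. Since the Nijenhuis tensor is a tensor, $N_J(x)=0$ if and only if $N_{\mathbb J}(0)=0$, so it suffices to prove the latter.

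First I would show $\mathbb J_0:=\mathbb J(0)=\pm i$. Because $\tilde{\mathbb J}$ is now smooth on all of $\mathcal B$, the pseudoholomorphy relation $dp_1^{\mathcal B}\circ\tilde{\mathbb J}=\mathbb J\circ dp_1^{\mathcal B}$, which holds on the dense set $\mathcal B\setminus E$, holds everywhere; in particular, at a point $(0,w)\in E$ of the chart $v_1\neq0$ it gives $\mathbb J_0(L)\subseteq L$, where $L$ is the image of $(dp_1^{\mathcal B})_{(0,w)}$. By \eqref{differenziale-p} with $z=0$ this image is the complex line $\ell_w=\C\cdot(1,w)$. Thus $\mathbb J_0$ is a complex structure on $\C^2$ leaving invariant every line $\ell_w$, $w\in\C$; testing this against, say, $\ell_0,\ell_1,\ell_{-1},\ell_i$ forces, by elementary linear algebra, $\mathbb J_0=\varepsilon\, i\cdot\mathrm{id}$ for some $\varepsilon\in\{\pm1\}$. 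Conjugating the complex coordinates on $\C^2$ if necessary (this alters neither the hypotheses nor the statement $N_{\mathbb J}(0)=0$), I may assume $\mathbb J(0)=i$.

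Next I would prove $\bar\partial\mathbb J(0)=0$. Split $\mathbb J=\mathbb J^{+}+\mathbb J^{-}$ into its $\C$-linear and $\C$-antilinear parts (the ``$(1)$'' and ``$(2)$'' blocks of \eqref{multiplication}). Differentiating $\mathbb J^2=-\mathrm{id}$ at $0$ and using $\mathbb J(0)=i$ shows that each $(\partial_\mu\mathbb J)(0)$ anticommutes with $i$, hence $d\mathbb J^{+}(0)=0$. On the other hand, in the chart $v_1\neq0$ the smoothness of the right-hand side of \eqref{almost-complex-structure-blowup} across $z=0$ forces the numerators $d-wc$ obtained via \eqref{vector-c-d} in the cases $b=0$ and $a=0$ — once the spurious factor $\bar z$ in their antilinear terms is taken into account — to be divisible by $z$ as smooth germs in $(z,\bar z,w,\bar w)$; expanding these to first order in $z$ and using $\mathbb J(0)=i$ (so that all constant terms already vanish) makes the coefficient of $\bar z$ vanish, which is exactly $\partial_{\bar z_1}$ of each entry of $\mathbb J^{-}$ at $0$. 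The symmetric computation in the chart $v_2\neq0$ (using \eqref{extension-condition-2}) gives the same for $\partial_{\bar z_2}$. Together with $d\mathbb J^{+}(0)=0$ this yields $\bar\partial\mathbb J(0)=0$. Finally, for $\mathbb J(0)=i$ the classical coordinate formula for the Nijenhuis tensor shows that $N_{\mathbb J}(0)$ is a fixed nonzero multiple of $\big(\partial_{\bar z_1}J^{z_l}_{\bar z_2}-\partial_{\bar z_2}J^{z_l}_{\bar z_1}\big)(0)$, $l=1,2$, and its conjugate; these are entries of $\bar\partial\mathbb J^{-}(0)$, so they vanish, giving $N_{\mathbb J}(0)=0$ and hence $N_J(x)=0$.

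The main obstacle is the third step. One must carry out the first-order Taylor expansions of the ``blow-down'' formula \eqref{almost-complex-structure-blowup} carefully, keeping track of the parasitic factors $\bar z/z$ and $1/z$, and verify that the divisibility-by-$z$ requirement translates precisely into $\partial_{\bar z_1}\mathbb J^{-}(0)=0$ (the whole antilinear part, not merely some of its entries) in the chart $v_1\neq0$, together with its analogue in $v_2\neq0$. This is also where the ``pair'' bookkeeping of \eqref{multiplication} must be matched correctly both with the intrinsic antilinear part $\mathbb J^{-}$ and with the coordinate expression for the Nijenhuis tensor.
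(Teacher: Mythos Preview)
Your proposal is correct and follows the same route as the paper: reduce to a chart via Lemma~\ref{topological-extension}, normalize $\mathbb J(0)=i$, use $\mathbb J^2=-\id$ to kill the first jet of the $\C$-linear part, extract the vanishing of the $\bar\partial$-derivatives of the $\C$-antilinear part from the smooth extension of $\tilde{\mathbb J}$, and conclude that $N_{\mathbb J}(0)=0$. Two small remarks: first, your argument that $\mathbb J(0)$ must preserve every complex line $\ell_w$ (hence equals $\pm i$) is a genuine addition --- the paper simply writes ``we may assume'' $\mathbb J(0)$ is standard without justification, and an arbitrary real-linear coordinate change would not respect the complex structure used to define $\mathcal B$; second, your symmetric use of both charts $v_1\neq 0$ and $v_2\neq 0$ to obtain the $\partial_{\bar z_1}$ and $\partial_{\bar z_2}$ derivatives separately is slightly different from the paper, which works entirely in the chart $v_1\neq 0$ and reads off all eight conditions $(a_2)_{\bar z_j}=(b_2)_{\bar z_j}=(c_2)_{\bar z_j}=(d_2)_{\bar z_j}=0$ by looking at successive powers of $w,\bar w$ in the two extension conditions \eqref{eq:ext_cond_1}--\eqref{eq:ext_cond_2}; either bookkeeping gives exactly the same information.
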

\begin{proof}
Let $(\mathcal{U},\phi)$ be a local chart at $x$, $\mathcal{U}\subset\C^2$ $\phi:\mathcal{U}\to \phi(\mathcal{U})\subset M$, $\phi(0)=x$. Let $J$ be represented in the local chart as 
\begin{equation}\label{expression-J}
\mathbb{J}_{(z_1,z_2)}=\begin{bmatrix}
(A^{(1)},A^{(2)}) & (B^{(1)},B^{(2)})\\[5pt]
(C^{(1)},C^{(2)})& (D^{(1)},D^{(2)})
\end{bmatrix},
\end{equation}
where $A^{(i)}$, $B^{(j)}$, $C^{(h)}$, $D^{(k)}$ are complex valued smooth functions of $(z_1,z_2)$.

In the local chart $(\C^2,\psi_1)$, the local expression $\tilde{\mathbb{J}}$ of the almost complex structure $\tilde{J}$ on the blow-up is
\[
\tilde{\mathbb{J}}_{(z,w)}=(dp_1^{\mathcal{B}})_{(z,zw)}^{-1}\circ \mathbb{J}_{(z,zw)}\circ (dp_{1}^{\mathcal{B}})_{(z,w)},
\]
so that
\begin{equation}\label{expression-J-tilde}
\tilde{\mathbb{J}}_{(z,w)}\begin{bmatrix}
a \\
b
\end{bmatrix}=\begin{bmatrix}
A^{(1)}a+B^{(1)}(wa+zb)+A^{(2)}\overline{a}+B^{(2)}(\overline{w}\overline{a}+\overline{w}\overline{b})\\
\\
z^{-1}(-A^{(1)}aw-B^{(1)}(wa+zb)w+C^{(1)}a+D^{(1)}(wa+zb)\\
-A^{(2)}\overline{a}+B^{(2)}(\overline{wa}+\overline{z}\overline{b})w+C^{(2)}\overline{a}+D^{(2)}(\overline{wa}+\overline{z}\overline{b})),
\end{bmatrix}
\end{equation}
where $A^{(i)}$, $B^{(j)}$, $C^{(h)}$, $D^{(k)}$ are evaluated at $(z,zw)$.
In particular, when $a=1$, $b=0$
\begin{equation}\label{eq:j_a1_b0}
\tilde{\mathbb{J}}_{(z,w)}\begin{bmatrix}
1 \\
0
\end{bmatrix}=\begin{bmatrix}
A^{(1)}+B^{(1)}w+A^{(2)}+B^{(2)}\overline{w}\\[5pt]
z^{-1}(-A^{(1)}w-B^{(1)}w^2+C^{(1)}+D^{(1)}w -A^{(2)}w-B^{(2)}|w|^2+C^{(2)}+D^{(2)}\overline{w})
\end{bmatrix}
\end{equation}
and when $a=0$, $b=1$,
\begin{equation}\label{eq:j_a0_b1}
\tilde{\mathbb{J}}_{(z,w)}\begin{bmatrix}
0 \\
1
\end{bmatrix}=\begin{bmatrix}
B^{(1)}z+B^{(2)}\overline{z}\\[5pt]
-B^{(1)}w+D_1 +z^{-1}(-B^{(2)}\overline{z}w+D^{(2)}\overline{z})
\end{bmatrix}.
\end{equation}
Under the assumption that $\tilde{\mathbb{J}}$ extends across the exceptional divisor, i.e., $\{z=0\}$, it must hold that
\begin{align}
-A^{(1)}w-B^{(1)}w^2+C^{(1)}+D^{(1)}w -A^{(2)}w-B^{(2)}|w|^2+C^{(2)}+D^{(2)}\overline{w}=zH_1(z,w)\label{eq:ext_cond_1}\\
-B^{(2)}\overline{z}w+D^{(2)}\overline{z}=zH_2(z,w)\label{eq:ext_cond_2}
\end{align}
where $H_1$, $H_2$ are smooth functions.

We may assume that at $(z_1,z_2)=(0,0)$ $\mathbb{J}$ is standard, that is
$$
\mathbb{J}_{(0,0)}=
\begin{bmatrix}
(i,0) & (0,0)\\[5pt]
(0,0) & (i,0)
\end{bmatrix}.
$$

Let us denote by $a_i$, $b_j$, $c_h$, $d_k$ the terms of order higher than $0$ of the Taylor series expansion in $(z_1,z_2)$ of, respectively,  $A^{(i)}=A^{(i)}(z_1,z_2)$, $B^{(j)}=B^{(j)}(z_1,z_2)$, $C^{(h)}=C^{(h)}(z_1,z_2)$, $D^{(k)}=D^{(k)}(z_1,z_2)$, so that we can write
\begin{equation*}
\mathbb{J}_{(z_1,z_2)}=\begin{bmatrix}
(i+a_1,a_2) & (b_1,b_2)\\[5pt]
(c_1,c_2)& (i+d_1,d_2)
\end{bmatrix}.
\end{equation*}
Since $\mathbb{J}^2=-\id$, a straightforward computation shows that the first order terms of $a_1$, $b_1$, $c_1$, $d_1$ vanish.

By examining the Taylor series expansions for the $z^{-1}$-components of \eqref{eq:j_a1_b0} and \eqref{eq:j_a0_b1} and by equations \eqref{eq:ext_cond_1} and \eqref{eq:ext_cond_2}, we derive the following relations at the origin $(0,0)$.

The degree one terms of \eqref{eq:j_a1_b0} and \eqref{eq:ext_cond_2} imply that
\begin{equation*}
(c_2)_{\overline{z_1}}=0.
\end{equation*}
The degree two terms of \eqref{eq:j_a1_b0} and \eqref{eq:j_a0_b1} and extension conditions \eqref{eq:ext_cond_1} and \eqref{eq:ext_cond_2} imply that
\begin{equation*}
(d_2)_{\overline{z_1}}=0, \qquad (a_2)_{\overline{z_1}}=0, \qquad (c_2)_{\overline{z}_2}=0.
\end{equation*}
The degree three terms of \eqref{eq:j_a1_b0} and \eqref{eq:j_a0_b1} and extension conditions \eqref{eq:ext_cond_1} and \eqref{eq:ext_cond_2} imply that
\begin{equation*}
(b_2)_{\overline{z}_1}=0, \qquad (d_2)_{\overline{z}_2}=0, \qquad (a_2)_{\overline{z}_2}=0.
\end{equation*}
The degree four terms of \eqref{eq:j_a1_b0} and extension condition \eqref{eq:ext_cond_1} and imply that
\begin{equation*}
(b_2)_{\overline{z}_2}=0.
\end{equation*}
Hence, at the origin $(0,0)$,
\begin{equation*}
N_{J}\left(\frac{\del}{\del \overline{z}_1},\frac{\del}{\del \overline{z}_2}\right)=0,
\end{equation*} 
which implies that $N_J\restrict{(0,0)}\equiv 0$.

\end{proof}
\begin{rem}\label{non-sufficient}
It has to be remarked that the existence of almost complex blow-up of a $4$-dimensional almost complex manifold $(M,J)$ at a point $x$ depends on the choice of the local chart $(\mathcal{U},\phi)$ at $x$, as the following example shows. \newline
Let $(M,J)$ be a $4$-dimensional almost complex manifold and let $x\in M$. Assume that $J$ is integrable in a neighbourhood of $x$. Without loss of generality we can work in the local model $(\C^2,i)$, $x=0\in\C^2$, with coordinates $(z_1=x_1+iy_1,z_2=x_2+iy_2)$, assuming $J=i$. Let $(\mathcal{U},\phi)$ defined by $\mathcal{U}=\C^2$ and
$$
\phi(z_1,z_2)=(z_1,z_2+\overline{z}_1^2).
$$ 
Then $\phi:\mathcal{U}\to \C^2$ is a global diffeomorphism. Then, on the chart $(\mathcal{U},\phi)$, the local expression of $J=i$ is given by
$$
\mathbb{J}=d\phi^{-1}\circ i\circ d\phi.
$$
Explicitly, $\mathbb{J}_{(z_1,z_2)}$ is given by
$$
\left\{
\begin{array}{l}
\mathbb{J}\partial_{x_1}=\partial_{y_1}+4y_1\partial{x_2}\\[5pt]
\mathbb{J}\partial_{y_1}=-\partial_{x_1}-4y_1\partial{y_2}\\[5pt]
\mathbb{J}\partial_{x_2}=\partial_{y_2}\\[5pt]
\mathbb{J}\partial_{y_2}=-\partial_{x_2}.
\end{array}
\right.
$$
Clearly $\mathbb{J}$ is integrable on $\mathcal{U}$. According to the notation used in formula \eqref{expression-J}, a direct computation shows that 
$$
\mathbb{J}_{(z_1,z_2)}=\begin{bmatrix}
(i,0) & (0,0)\\[3pt]
(0,-2i(z_1-\overline{z}_1)& (i,0)
\end{bmatrix}.
$$
By formula \eqref{expression-J-tilde}, we derive that 
$$
\tilde{\mathbb{J}}_{(z,w)}\begin{bmatrix}
a \\
b
\end{bmatrix}=\begin{bmatrix}
ia\\
\\
ib-2i\frac{(z-\overline{z})}{z}\overline{a}
\end{bmatrix}.
$$
In particular, 
$$
\tilde{\mathbb{J}}_{(z,w)}\begin{bmatrix}
1 \\
0
\end{bmatrix}=\begin{bmatrix}
i\\
\\
-2i\frac{(z-\overline{z})}{z}
\end{bmatrix}.
$$
Therefore, $\tilde{\mathbb{J}}$ cannot be extended continuosly through the exceptional divisor. The expression of $\tilde{\mathbb{J}}$ is given by
$$
\tilde{\mathbb{J}}_{(z,w)}=\begin{bmatrix}
(i,0) & (0,0)\\[5pt]
(0,-2i\frac{(z_1-\overline{z}_1)}{z})& (i,0)
\end{bmatrix}.
$$
The last example shows that the vanishing of the Nijenhuis tensor at a point is not a sufficient condition for the existence of almost complex blow-up in a fixed local chart.
\end{rem}

The next Theorem proves the uniqueness of almost complex blow-ups.
\begin{theorem}\label{uniqueness-blowup-4}
Almost complex blow-ups at a point are unique in the following sense. Suppose $f:(X_1, J_1) \to (X_2, J_2)$ is a pseudobiholomorphism with $f(p_1) = p_2$ and that $X_1$ and $X_2$ have blow ups $\tilde{X}_1$ and $\tilde{X}_2$ at $p_1$ and $p_2$ respectively. Then there exists a pseudobiholomorphism $\tilde{f}:\tilde{X}_1 \to \tilde{X_2}$ such that $p_2 \circ \tilde{f} = f \circ p_2$ and identifying the exceptional divisor with the projectivized tangent spaces we have $\tilde{f}[e] = [d\tilde{f} e]$ for $[e] \in \mathbb{P} T_{p_1}X_1$.
\end{theorem}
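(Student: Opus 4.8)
The plan is to construct $\tilde f$ in two pieces --- one away from the exceptional divisors and one on them --- and then to check that the pieces fit together smoothly, that the result is a pseudobiholomorphism, and that on the divisor it is the projectivised differential of $f$. First I would use Lemma \ref{topological-extension} (and the chart produced in its proof) to realise $\tilde X_i$ as the blow-up $\tilde M_{p_i,\phi_i}$ of $X_i$ with respect to a smooth chart $(\mathcal U_i,\phi_i)$, $\phi_i(0)=p_i$, with blow-down map $\pi_i$ and exceptional divisor $E_i=\pi_i^{-1}(p_i)$; recall that $\pi_i$ restricts to a pseudobiholomorphism $\tilde X_i\setminus E_i\to X_i\setminus\{p_i\}$. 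On $\tilde X_1\setminus E_1$ I set $\tilde f:=\pi_2^{-1}\circ f\circ\pi_1$, which is a composition of pseudobiholomorphisms, hence a pseudobiholomorphism onto $\tilde X_2\setminus E_2$, and which satisfies $\pi_2\circ\tilde f=f\circ\pi_1$ there. On $E_1$ I use that, $f$ being pseudoholomorphic, $df_{p_1}\colon(T_{p_1}X_1,J_1)\to(T_{p_2}X_2,J_2)$ is a $\C$-linear isomorphism, so it induces a biholomorphism $\mathbb{P}(df_{p_1})$ of projectivised tangent spaces, and under the natural identifications $E_i\cong\mathbb{P}(T_{p_i}X_i)$ (coming from $E_i=\{0\}\times\mathbb{P}^1(\C)=\mathbb{P}(T_0\mathcal U_i)$ and $d\phi_i$) I declare $\tilde f|_{E_1}:=\mathbb{P}(df_{p_1})$.

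The main step is to show that $\tilde f$ extends smoothly across $E_1$ and restricts there to the map just described; since $\tilde X_1\setminus E_1$ is dense, such an extension is automatically unique and hence consistent. I would work in blow-up coordinates: near a point of $E_1$, a neighbourhood of $E_1$ is identified with $\mathcal B_1\subset\mathcal U_1\times\mathbb{P}^1(\C)$, with $\pi_1$ becoming $p_1^{\mathcal B_1}$, and on the affine chart $v_1\neq 0$ one has coordinates $(z,w)$ with $p_1^{\mathcal B_1}(z,w)=(z,zw)$, and symmetrically for $v_2\neq 0$ and for the two affine charts of $\mathcal B_2$. Setting $h:=\phi_2^{-1}\circ f\circ\phi_1$ --- a germ of diffeomorphism at $0\in\C^2$ with $h(0)=0$, pseudoholomorphic for $\mathbb J_i:=d\phi_i^{-1}\circ J_i\circ d\phi_i$ --- one checks first that the mere existence of the blow-ups forces $\mathbb J_i(0)\in\{i,-i\}$: extension of $\tilde{\mathbb J}$ across $z=0$ in \eqref{almost-complex-structure-blowup} requires $\mathbb J_i(0)$ to act as a common scalar $c_0$ on every complex line through the origin, and $\mathbb J_i(0)^2=-\id$ together with connectedness then give $c_0=\pm i$ and hence $\mathbb J_i(0)=\pm i$. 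After composing the charts with a complex conjugation if necessary --- which changes the blow-up only by a pseudobiholomorphism --- we may assume $\mathbb J_1(0)=\mathbb J_2(0)=i$, so $dh_0\in\GL(2,\C)$. In these coordinates $\tilde f(z,w)=\bigl(h(z,zw),[h_1(z,zw):h_2(z,zw)]\bigr)$ for $z\neq 0$; writing $h_j(z_1,z_2)=\ell_j(z_1,z_2)+O(|z|^2)$ with $\ell_j$ the ($\C$-linear) differential of $h_j$ at $0$, we get $h_j(z,zw)=z\,\ell_j(1,w)+z^2g_j(z,w)$ with $g_j$ smooth for $w$ bounded, whence
\[
[h_1(z,zw):h_2(z,zw)]=\bigl[\ell_1(1,w)+zg_1(z,w):\ell_2(1,w)+zg_2(z,w)\bigr]
\]
extends smoothly across $z=0$ (in the appropriate affine chart of $\mathbb{P}^1(\C)$) to $[\ell_1(1,w):\ell_2(1,w)]=[dh_0(1,w)]$, while the first component $h(z,zw)$ is smooth and vanishes on $z=0$. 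So $\tilde f$ extends smoothly, and on $E_1$ it is $[v]\mapsto[dh_0 v]=\mathbb{P}(df_{p_1})$ under the identifications.

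To conclude: applying the same construction to $f^{-1}$ produces a smooth $\widetilde{f^{-1}}\colon\tilde X_2\to\tilde X_1$, and since $\tilde f\circ\widetilde{f^{-1}}$ and $\widetilde{f^{-1}}\circ\tilde f$ restrict to the identity on the dense sets $\tilde X_2\setminus E_2$ and $\tilde X_1\setminus E_1$, they are the identity, so $\tilde f$ is a diffeomorphism carrying $E_1$ onto $E_2$; the identity $d\tilde f\circ\tilde J_1=\tilde J_2\circ d\tilde f$ holds on the dense open set $\tilde X_1\setminus E_1$ and all the data are smooth, hence it holds everywhere, so $\tilde f$ is a pseudobiholomorphism; $\pi_2\circ\tilde f=f\circ\pi_1$ holds off $E_1$ and extends by continuity (both sides send $E_1$ to $p_2$); and the local computation of the previous paragraph identifies $\tilde f|_{E_1}$ with $\mathbb{P}(df_{p_1})$, which is the asserted relation $\tilde{f}[e]=[d\tilde{f} e]$ for $[e]\in\mathbb{P}T_{p_1}X_1$.

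I expect the main obstacle to be precisely the smooth extension of $\tilde f$ across the exceptional divisor in the second paragraph: everything hinges on $dh_0$ being $\C$-linear, so that the factor $z$ may be cancelled in the projective coordinate; without $\C$-linearity the ratio $h_1(z,zw)/h_2(z,zw)$ would contain terms such as $\bar z/z$ that fail to extend continuously. This $\C$-linearity uses both that $f$ is pseudoholomorphic and the normalisation $\mathbb J_i(0)=i$, so the one structural point that must be argued carefully is that the normalisation is legitimate, i.e.\ that the blow-up depends on $\mathbb J$ at the base point only through its value there.
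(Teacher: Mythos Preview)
Your overall strategy matches the paper's: define $\tilde f=\pi_2^{-1}\circ f\circ\pi_1$ off the exceptional divisors, set $\tilde f|_{E_1}=\mathbb P(df_{p_1})$, and verify in blow-up coordinates that the pieces fit. You are in fact more careful than the paper about the normalisation $\mathbb J_i(0)=i$; the paper simply asserts ``we use local charts which are complex at the origin'', whereas you explain why the existence of the blow-up forces this.

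The genuine gap is in the smooth-extension step. From $h_j(z_1,z_2)=\ell_j(z_1,z_2)+O(|z|^2)$ with $\ell_j$ $\C$-linear you claim $h_j(z,zw)=z\,\ell_j(1,w)+z^2g_j(z,w)$ with $g_j$ \emph{smooth}. This does not follow: the remainder $r_j$ is only a smooth function vanishing to second order, so $r_j(z,zw)$ can contain a term such as $\bar z^{\,2}$ (coming from a $\bar z_1^{\,2}$ contribution in the Taylor expansion of $h_j$), and $\bar z^{\,2}/z$ is continuous at $0$ but not differentiable there. What your computation actually yields is $h_j(z,zw)/z\to\ell_j(1,w)$ as $z\to 0$, i.e.\ a \emph{continuous} extension of $\tilde f$ across $E_1$, not a smooth one. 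Since the rest of your argument (density argument for pseudoholomorphicity, construction of $\widetilde{f^{-1}}$) presupposes smoothness, the proof is incomplete as written. The paper handles exactly this point by invoking elliptic regularity: once $\tilde f$ is a continuous extension across the real-codimension-two set $E_1$ of a map that is pseudoholomorphic with respect to \emph{smooth} almost complex structures $\tilde J_1,\tilde J_2$ (this is where the hypothesis that both blow-ups exist is really used), removable singularities for pseudoholomorphic maps upgrades continuity to smoothness. If you insert that appeal in place of the unjustified ``$z^2g_j$ smooth'' claim, your argument goes through.
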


\begin{cor} If a blow up of $(X,J)$ exists at the point $p$ then it is independent of the chart used up to pseudobiholomorphism.
\end{cor}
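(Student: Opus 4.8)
The plan is to obtain the corollary as an immediate specialization of Theorem~\ref{uniqueness-blowup-4}. Suppose $(X,J)$ admits a blow-up at $p$. By the discussion preceding Definition~\ref{almost-complex-blowup-4-dim-x} and by Lemma~\ref{topological-extension}, this means that for some smooth chart $(\mathcal{U},\phi)$ the almost complex structure $\tilde{\mathbb{J}}$ obtained by pulling back $\mathbb{J}=d\phi^{-1}\circ J\circ d\phi$ through $p_1^{\mathcal{B}}$ extends smoothly across the exceptional divisor, producing a blow-up $(\tilde{X}_\phi,\tilde{J}_\phi)$ together with its blow-down $\pi_\phi$. Let $(\mathcal{U}_1,\phi_1)$ and $(\mathcal{U}_2,\phi_2)$ be two such charts, with associated blow-ups $(\tilde{X}_1,\tilde{J}_1,\pi_1)$ and $(\tilde{X}_2,\tilde{J}_2,\pi_2)$.

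Next I would apply Theorem~\ref{uniqueness-blowup-4} to the pseudobiholomorphism $f=\mathrm{id}_X\colon(X,J)\to(X,J)$, which trivially satisfies $f(p)=p$. Since both $\tilde{X}_1$ and $\tilde{X}_2$ are blow-ups of $X$ at $p$ in the sense of Definition~\ref{blow-up-almost-complex-intro}, the theorem furnishes a pseudobiholomorphism $\tilde{f}\colon\tilde{X}_1\to\tilde{X}_2$ with $\pi_2\circ\tilde{f}=f\circ\pi_1=\pi_1$, acting on the exceptional divisor by $[e]\mapsto[d\tilde{f}\,e]$. In particular $(\tilde{X}_1,\tilde{J}_1)$ and $(\tilde{X}_2,\tilde{J}_2)$ are pseudobiholomorphic, which is precisely the claimed independence of the chart.

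Since the corollary is a direct consequence of the already-established uniqueness theorem, I do not anticipate a genuine obstacle. The only point requiring a line of care is the bookkeeping that a blow-up constructed with respect to a chart, as in Definition~\ref{almost-complex-blowup-4-dim-x}, is indeed a blow-up in the sense of Definition~\ref{blow-up-almost-complex-intro}, so that Theorem~\ref{uniqueness-blowup-4} applies; this is exactly the first observation in the proof of Lemma~\ref{topological-extension}. It is also worth remarking that this does not contradict Remark~\ref{chart}: in that example the two smooth pairs fail to be diffeomorphic over $X$ precisely because the candidate almost complex structure does not extend on one of the charts, so the hypothesis ``a blow-up of $(X,J)$ exists at $p$'' is not satisfied there.
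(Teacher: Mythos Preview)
Your proof is correct and follows exactly the paper's approach: apply Theorem~\ref{uniqueness-blowup-4} to the identity map $f=\mathrm{id}_X$. Your closing aside about Remark~\ref{chart} is slightly misstated, though: in that example a blow-up of $(\C^2,i)$ at $0$ \emph{does} exist (via $\phi_1$), so the hypothesis of the corollary is satisfied; the point is rather that the chart $\phi_2$ does not produce an almost complex blow-up (cf.\ Remark~\ref{non-sufficient}), so there is nothing to compare and no contradiction arises.
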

\begin{proof} We apply Theorem \ref{uniqueness-blowup-4} to the identity map.
\end{proof}

By contrast, even in the integrable case, the biholomorphism type of a blow up may depend on the choice of the base point $p$.

\begin{proof}(of Theorem \ref{uniqueness-blowup-4}.) The formulas in the theorem define a bijection between $\tilde{X}_1$ and $\tilde{X_2}$, which is holomorphic away from the exceptional divisor. By elliptic regularity, it suffices to show that $\tilde{f}$ extends continuously across $E$.

We work in local coordinate charts on ${\mathcal B}_1$ and ${\mathcal B}_2$ corresponding to charts on $X_1$ and $X_2$. In terms of these charts we write $f$ as $$f(z_1, z_2) = (f_1(z_1, z_2), f_2(z_1, z_2)).$$ We have $f(0,0)=(0,0)$ and let's assume that on the projectivized tangent space at $(0,0)$ we have $$[df(0,0)(1,a)] = [df_1(0,0)(1,a) : df_2(0,0)(1,a)] = [1:b],$$ that is, we assume the plane $\{z_2 = az_1\}$ is not mapped to $\{z_1=0\}$. Recall that we use local charts which are complex at the origin, thus $df_1(0,0)$ and $df_2(0,0)$ are complex linear maps $\C^2 \to \C$.

We check continuity at the point $( (0,0), [1:a])$.

Near the points $( (0,0), [1:a])$ and $( (0,0), [1:b])$ in ${\mathcal B}_1$ and ${\mathcal B}_2$ we have local coordinates $(z,w)$ centered at $(0,a)$ and $(0,b)$ respectively and corresponding to the points $( (z, zw), [1:w] )$. The exceptional divisor intersects these charts in $\{z=0\}$. When $z \neq 0$ our function $\tilde{f}$ is given in these coordinates by
$$(z,w) \mapsto (f_1(z,zw), \frac{f_2(z, zw)}{f_1(z, zw)}).$$

Expanding in $z$, complex linearity at $(0,0)$ implies
$$f_1(z, zw) = z(\frac{\partial f_1(0,w)}{\partial z_1} + w  \frac{\partial f_1(0,w)}{\partial z_2}) + h.o.t.$$ and similarly for $f_2$.
Letting $z \to 0$, we see that $$(f_1(z,zw), \frac{f_2(z, zw)}{f_1(z, zw)}) \to (0, \frac{\frac{\partial f_2(0,0)}{\partial z_1} + w  \frac{\partial f_2(0,0)}{\partial z_2}}{\frac{\partial f_1(0,0)}{\partial z_1} + w  \frac{\partial f_1(0,0)}{\partial z_2}}).$$

The quotient here is just $\frac{df_2(0,0)(1,w)}{df_1(0,0)(1,w)}$, and since $df_1(0,0)(1,a) \neq 0$ by assumption, this converges to $b$ as $w \to a$ as required.
\end{proof}

\section{Examples}\label{Examples}
In this section we will construct several examples illustrating different situations. 
\begin{enumerate}
\item[I)] {\em $\mathbb{J}$ is smooth on $\C^2$, satisfies the weak line condition and $\tilde{\mathbb{J}}$ extends on the blow-up}.\vskip.2truecm\noindent
Let $\mathbb{J}$ be the smooth $(1,1)$-tensor field on $\C^2$ defined by
$$
\mathbb{J}_{(z,w)}
=\begin{bmatrix}
(i,0) & (0,0)\\[5pt]
(-i\vert z\vert^2\bar{z}w,\,-\sqrt{2+\vert z\vert^4}z\bar{w}) & (i(1+\vert z\vert ^4),\,\vert z\vert^2\sqrt{2+\vert z\vert^4})
\end{bmatrix}.
$$
Then, a straightforward calculation shows that $\mathbb{J}$ gives rise to an almost complex structure on $\C^2$. \newline
\begin{enumerate}
\item[a)] $\mathbb{J}$ satisfies the weak line condition, that is 
$$
\mathbb{J}_{(z,w)}
\begin{bmatrix}
z\\
w
\end{bmatrix}
=
\begin{bmatrix}
iz\\
iw
\end{bmatrix},
$$
for all $(z,w)\in\C^2$. Indeed 
$$
\mathbb{J}_{(z,w)}
\begin{bmatrix}
z\\
w
\end{bmatrix}
=
\begin{bmatrix}
iz\\
-i\vert z\vert^4w-\vert z\vert^2\sqrt{2+\vert z\vert^4}\bar{w}+i(1+\vert z\vert ^4)w+\vert z\vert^2\sqrt{2+\vert z\vert^4}\bar{w}
\end{bmatrix}
=\begin{bmatrix}
iz\\
iw
\end{bmatrix}.
$$
\item[b)] Now we check that conditions \eqref{extension-condition-1} and \eqref{extension-condition-2} are satisfied. We have:
$$
A^{(2)}=0,\quad B^{(2)}=0,\quad C^{(2)}=-\sqrt{2+\vert z\vert^4}z\bar{w}),\quad D^{(2)}=\vert z\vert^2\sqrt{2+\vert z\vert^4}.
$$
Therefore,
$$
D^{(2)}(z,zw)-wB^{(2)}(z,zw)=\vert z\vert^2\sqrt{2+\vert z\vert^4}=F(z,w)z
$$
and 
$$
A^{(2)}(zw,z)-wC^{(2)}(zw,z)=-\vert zw\vert^2\sqrt{2+\vert zw\vert^4}\bar{z}w=G(w,z)z
$$
where we set
$$
F(z,w)=\sqrt{2+\vert zw\vert^4}\bar{z},\qquad 
G(w,z)=-\vert w\vert^2\sqrt{2+\vert zw\vert^4}\bar{z}^2.
$$
\end{enumerate}
Therefore, $\tilde{\mathbb{J}}$ extends on the blow-up.\vskip.3truecm\noindent
\item[II)] {\em $\mathbb{J}$ is smooth on $\C^2$, satifies the weak line condition and $\tilde{\mathbb{J}}$ does not extend on the blow-up}.\vskip.2truecm\noindent
Let $\mathbb{J}$ be the smooth $(1,1)$-tensor field on $\C^2$ defined by
$$
\mathbb{J}_{(z,w)}
=\begin{bmatrix}
(i,0) & (0,0)\\[5pt]
(-i\vert z\vert^6\bar{z}w,\,-\sqrt{2+\vert z\vert^8}\bar{z}^3\bar{w}) & (i(1+\vert z\vert^8),\,\sqrt{2+\vert z\vert^8}\bar{z}^4)
\end{bmatrix}.
$$
Then, a straightforward calculation shows that $\mathbb{J}$ gives rise to an almost complex structure on $\C^2$. \newline 
\begin{enumerate}
\item[a)] As in case I), a direct computation shows that $\mathbb{J}$ satisfies the weak line condition.
\item[b)] The extension formula \eqref{extension-condition-1} is not satisfied.\newline
Indeed, $$
D^{(2)}(z,zw)-wB^{(2)}(z,zw)=\sqrt{2+\vert z\vert^8}\bar{z}^4)
$$
which cannot be written as $F(z,w)z$, with $F$ smooth. 
\end{enumerate}
Summing up, $\mathbb{J}$ is an almost complex structure on $\C^2$, which satisfies the weak line condition and which do not satisfy the extension condition \eqref{extension-condition-1}. Therefore $\tilde{\mathbb{J}}$ does not extend on the blow-up.\vskip.3truecm\noindent
\item[III)] {\em $\mathbb{J}$ is smooth on $\C^2$ and satisfies the line condition.} 
Let $\mathbb{J}$ be the smooth $(1,1)$-tensor field on $\C^2$ defined by
$$
\mathbb{J}_{(z,w)}
=\begin{bmatrix}
(i,-z^2\bar{w}) & (0,\vert z\vert^2z)\\[5pt]
(0,\,-\vert w\vert^2 z) & (i,\,\vert z\vert^2 w)
\end{bmatrix}.
$$
Then one can check directly that $\mathbb{J}^2=-\id$. 
\begin{enumerate}
\item[a)] $\mathbb{J}$ satisfies the line condition. Let $(z,w)$ be any point in $\C^2$. Set 
$$L_{(z,w)}=\Span_\C\langle z,w\rangle,\,\qquad L_{(z,w)}^\perp\Span_\C\langle \bar{w},-\bar{z}\rangle.
$$
Then, we need to show that 
$$
\mathbb{J}_{(z,w)}
\begin{bmatrix}
\bar{w}\\
-\bar{z}
\end{bmatrix}
=
\begin{bmatrix}
i\bar{w}\\
-i\bar{z}
\end{bmatrix}+ H(z,w)
\begin{bmatrix}
z\\
w
\end{bmatrix}.
$$
Indeed, we have:
$$
\mathbb{J}_{(z,w)}
\begin{bmatrix}
\bar{w}\\
-\bar{z}
\end{bmatrix}
=\begin{bmatrix}
i\bar{w}-(\vert w\vert^2+\vert z\vert^2)z^2\\
-i\bar{z}-(\vert w\vert^2+\vert z\vert^2) zw
\end{bmatrix}
=\begin{bmatrix}
i\bar{w}\\
-i\bar{z}
\end{bmatrix}+ H(z,w)
\begin{bmatrix}
z\\
w
\end{bmatrix},
$$
where $H(z,w)=(\vert w\vert^2+\vert z\vert^2) z$. 
\end{enumerate}
Therefore $\mathbb{J}$ satisfies the line condition and according to Theorem \ref{theorem-blowup-strong}, the almost complex structure $\tilde{\mathbb{J}}$ extends to the blow-up.
\newline
In real coordinates $(x,y,u,v)$, where we set $z=x+iy,\,w=u+iv$, the almost complex structure $\mathbb{J}$ can be expressed as
$$
\mathbb{J}\frac{\partial}{\partial x}=\frac{\partial}{\partial y},\quad
\mathbb{J}\frac{\partial}{\partial y}=-\frac{\partial}{\partial x},\quad
\mathbb{J}\frac{\partial}{\partial u}=(x^2+y^2)^2\frac{\partial}{\partial x}+\frac{\partial}{\partial v},\quad
\mathbb{J}\frac{\partial}{\partial v}=-(x^2+y^2)^2\frac{\partial}{\partial y}-\frac{\partial}{\partial u}.
$$
An easy computation shows that the Nijenhuis tensor of $\mathbb{J}$ evaluated at $\frac{\partial}{\partial x},\frac{\partial}{\partial u}$ is given by
$$
N_{\mathbb{J}}(\frac{\partial}{\partial x},\frac{\partial}{\partial u})=4y(x^2+y^2)\frac{\partial}{\partial x}-4y(x^2+y^2)\frac{\partial}{\partial y},
$$
proving that $\mathbb{J}$ is not integrable. Note that, accordingly to Theorem \ref{obstruction-4dim}, 
$N_{\mathbb{J}}\vert_0=0$.
\end{enumerate}
\section{Almost K\"ahler blow-ups of $4$-dimensional almost K\"ahler manifolds}
\begin{theorem}\label{almost-kaehler-4-dim}
Let $(M,J,g,\omega)$ be a $4$-dimensional compact almost K\"ahler manifold. Assume that $J$ satisfies the line condition at $x\in M$. Then the almost complex blow-up $(\tilde{M}_x,\tilde{J})$ of $M$ at $x$ admits an almost K\"ahler metric $\tilde{\omega}$.
\end{theorem}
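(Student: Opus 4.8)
The plan is to first construct a symplectic form $\tilde\omega_0$ on the compact $4$-manifold $\tilde M_x$ which \emph{tames} $\tilde J$ and whose $\tilde J$-anti-invariant part is supported in an arbitrarily small neighbourhood of the exceptional divisor $E$ and is arbitrarily small there, and then to remove this anti-invariant part following the arguments of Taubes in \cite{T}, obtaining a closed, non-degenerate, $\tilde J$-compatible $2$-form $\tilde\omega$. The metric $\tilde g:=\tilde\omega(\cdot\,,\tilde J\cdot)$ is then an almost K\"ahler metric on $(\tilde M_x,\tilde J)$: it is symmetric because $\tilde\omega$ is $\tilde J$-invariant, and positive definite because $\tilde\omega$ tames $\tilde J$, a condition which survives a small perturbation.

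For the first part I would perform the symplectic blow-up of \cite{DS} inside the chart $(\mathcal U,\phi)$ realizing the line condition. Since $\omega$ is $J$-compatible, $\pi^*\omega$ is a closed $2$-form on $\tilde M_x$ which is $\tilde J$-invariant (as $\pi$ is $(\tilde J,J)$-pseudoholomorphic, including along $E$ by continuity), positive on $\tilde M_x\setminus E$, and semi-positive, with kernel exactly $TE$, along $E$. Let $\rho'$ be a closed $2$-form on $\tilde M_x$ supported in an arbitrarily small neighbourhood of $E$ and equal, near $E$, to the pull-back of the Fubini--Study form of $\mathbb P^1(\C)$ under $\mathcal B\subset\mathcal U\times\mathbb P^1(\C)\to\mathbb P^1(\C)$; such a $\rho'$ exists because $H^2$ of the punctured ball vanishes, so that pull-back is exact away from $E$ and one cuts off a primitive. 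Then $\tilde\omega_0:=\pi^*\omega+\lambda\rho'$ is closed, and I claim that for $\lambda>0$ small it tames $\tilde J$, hence is symplectic; this is the symplectic blow-up of $(M,\omega)$ at $x$, carried out in the given chart.

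To see the taming, note that outside a neighbourhood of $E$ we have $\tilde\omega_0=\pi^*\omega$, which is $\tilde J$-compatible (so, in particular, taming), with vanishing anti-invariant part. Near $E$ I would use Proposition \ref{lemma-3}: in the chart $v_1\neq 0$ the structure $\tilde{\mathbb J}$ coincides with the standard $i$ along $E=\{z=0\}$ and differs from it only through the entry $B^{(2)}(z,w)=|z|^2\bar z\,(C+H(z,w))$, which vanishes to third order along $\{z=0\}$. Hence the pull-back of $\phi^*\omega$ by $p_1^{\mathcal B}$ is exactly $\tilde{\mathbb J}$-invariant, while $\lambda\rho'$ is $\tilde{\mathbb J}$-invariant up to an error of size $O(\lambda\,|z|^3)$, so the anti-invariant part $\tilde\omega_0^-$ is supported near $E$ and is $O(\lambda)$ in any fixed norm. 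At points of $E$ the form $\tilde\omega_0$ is, up to lower order, a form positive on the normal plane to $E$ plus $\lambda$ times a form positive on $TE$, with controlled mixed terms, whence $\tilde\omega_0(v,\tilde Jv)>0$ for $v\neq 0$ there; by openness of the taming condition and compactness this persists on a neighbourhood of $E$ for $\lambda$ small, so $\tilde\omega_0$ tames $\tilde J$ on all of $\tilde M_x$.

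It then remains to find a $1$-form $\eta$, small in a suitable norm, solving $(d\eta)^-=-\tilde\omega_0^-$, where $(\cdot)^-$ denotes the projection onto $\tilde J$-anti-invariant $2$-forms: $\tilde\omega:=\tilde\omega_0+d\eta$ is then closed, still non-degenerate and taming because $\eta$ is small, and now $\tilde J$-compatible. In dimension $4$ the anti-invariant forms are self-dual and this is exactly the type of equation handled by Taubes in \cite{T}; the step I expect to be the real obstacle is controlling the finite-dimensional cokernel, namely the space of harmonic $\tilde J$-anti-invariant self-dual forms, for which one adapts Taubes' moduli-space-integration argument. The present situation is favourable because $\tilde\omega_0^-$ has been made as small and as concentrated near $E$ as we wish, which should reduce the removal to a manageable perturbative statement. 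Once $\eta$ is produced, $\tilde\omega$ and $\tilde g=\tilde\omega(\cdot\,,\tilde J\cdot)$ are the desired almost K\"ahler structure.
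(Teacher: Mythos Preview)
Your overall strategy matches the paper's: build a closed $2$-form on $\tilde M_x$ that tames $\tilde J$ and whose $\tilde J$-anti-invariant part is small and supported near $E$, then correct it to a genuine $(1,1)$-form. The paper carries this out with an explicit potential: it sets $\Omega=\pi^*\omega+i\,d'd''F$ where $F=(p_1^{\mathcal B})^*\mu^*f$ and $f$ is built from the solution of the ODE $xg'+g=\varepsilon/x$ on $[\delta,\eta]$ (Lemmas~\ref{lemma-4} and \ref{lemma-5}), yielding the pointwise bounds $|\Omega^-|<C|z|$ and $|\nabla\Omega^-|<C$ via Proposition~\ref{lemma-3}. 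Your construction via the symplectic blow-up (pull back Fubini--Study through $\mathcal B\to\mathbb P^1(\C)$ and cut off a primitive on the punctured ball) achieves the same qualitative control with the parameter $\lambda$ playing the role of $\varepsilon$, so this first half is fine and only differs in bookkeeping.

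The genuine gap is in the second half. You identify the obstruction correctly --- the harmonic $\tilde J$-anti-invariant self-dual forms --- but then defer to ``Taubes' moduli-space-integration argument''. That is both much heavier than needed and, as stated in \cite{T}, is a tamed-to-compatible theorem proved under the hypothesis $b^+=1$, which you have not arranged. The paper avoids this entirely with an elementary Hodge-theoretic step: choose a basis $\{\omega_k\}$ of $\mathcal H^+_{\tilde h}$ with the $\tilde J$-anti-invariant parts $\omega_k^-$ mutually $L^2$-orthogonal (Lemma~\ref{lem:ort_cond_omega_s}), set $\chi_k=\langle\Omega^-,\omega_k\rangle$ and $z_{k,J}=\langle\omega_k^-,\omega_k\rangle$, and subtract $\sum_{z_{k,J}\neq0}\chi_k\,z_{k,J}^{-1}\omega_k$ from $\Omega$. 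This keeps the form closed, and the resulting anti-invariant part lies in $(\mathcal H^+_{\tilde h})^\perp$, hence equals $(da)^+_{\tilde h}$ for some $1$-form $a$ by Lemma~\ref{lemma-DLZ}. The only input from \cite{T} is the elliptic estimate \cite[Lemma~1.1]{T}, which, together with the $C^0$ and $C^1$ bounds on the right-hand side, gives $|\nabla a|<C\varepsilon$; positivity then survives for $\varepsilon$ small. Since the smallness bound $|\chi_k|\le C\varepsilon$ follows immediately from the support and size of $\Omega^-$, this entire step is a two-line linear algebra argument plus a standard elliptic estimate --- not a moduli-space computation. Your sketch would become a proof once you replace the appeal to the full Taubes theorem by this explicit subtraction of harmonic projections.
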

Before starting with the proof of the theorem, we will need to recall some basic facts on Hodge theory in four dimension. Let $(M,J,g,\omega)$ be a compact $4$-dimensional compact almost Hermitian (non necessarily almost K\"ahler) manifold. Then, the almost complex structure $J$ acts on $2$-forms as an involution via the following 
\[
J\alpha(X,Y):=\alpha(JX,JY),
\]
for every couple of vector fields $X,Y$ on $M$. Denote by \[
\mathcal{A}_J^{\pm}:=\{\alpha\in\mathcal{A}^{2}(M): J\alpha=\pm \alpha\}
\]
and by $\bigwedge_J^{\pm}$ the corresponding bundles.
Let $\mathcal{A}_g^{+}$ and $\mathcal{A}_g^{-}$ be the spaces of self-dual, respectively anti-self-dual, $2$-forms on $M$ and set, as usual, $\bigwedge_g^{+}$, respectively $\bigwedge_g^{-}$, their corresponding bundles. We have the following bundle decomposition
\begin{equation*}
\textstyle\bigwedge_g^+=\R\langle\omega\rangle\oplus \bigwedge_J^-.
\end{equation*} 
For any given $\beta\in\mathcal{A}^2$, we denote by $(\beta)_g^+$, respectively $(\beta)_g^+$, the self-dual, respectively anti-self-dual, component of $\beta$. 
Let $d^*$ be the co-differential. Then we have the following Hodge decomposition, see, e.g., \cite[Lemma 2.4]{DLZ}. 
\begin{lemma}\label{lemma-DLZ}
Let $\alpha\in\mathcal{A}_g^+$. Let $\alpha=\alpha_h+d\theta+d^*\varphi$ be the Hodge decomposition of $\alpha$. Then,
\begin{equation}
(d\theta)_g^+=(d^*\varphi)_g^+, \qquad (d\theta)_{g}^-=-(d^*\varphi)_g^-.
\end{equation}
In particular, the $2$-form $$\alpha-2(d\theta)_g^+=\alpha_h$$ is harmonic and the $2$-form $$\alpha+2(d\theta)_g^-=\alpha_h+2d\theta$$ is closed.
\end{lemma}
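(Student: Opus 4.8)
The plan is to apply the Hodge star $*$ to the Hodge decomposition $\alpha = \alpha_h + d\theta + d^*\varphi$ and exploit its uniqueness. The structural fact that makes everything work in real dimension four is that $*$ is an involution on $2$-forms which fixes the harmonic forms and interchanges the exact and co-exact $2$-forms. Once this is in place, self-duality $*\alpha = \alpha$ forces the exact and co-exact parts of $\alpha$ to be Hodge-star images of one another, and the two claimed identities then drop out of the projectors $(\beta)_g^\pm = \tfrac12(\beta \pm *\beta)$.

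Concretely, first I would check the three ``type'' statements. Since $*$ commutes with the Hodge Laplacian $\Delta = dd^* + d^*d$, the form $*\alpha_h$ is again harmonic. For the exact piece, writing the codifferential on a $4$-manifold as $d^* = -*d*$ and using $*^2 = -\id$ on $1$-forms, one computes $*(d\theta) = d^*(*\theta)$, so $*(d\theta)$ is co-exact; dually, using $*^2 = \id$ on $2$-forms, $*(d^*\varphi) = d(-*\varphi)$ is exact. The precise signs are immaterial here: whatever convention is fixed, $*$ sends exact $2$-forms to co-exact ones and vice versa.

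Applying $*$ to the decomposition and invoking $*\alpha = \alpha$ yields
$$\alpha = *\alpha = *\alpha_h + *(d\theta) + *(d^*\varphi),$$
which is again a sum of a harmonic, a co-exact and an exact form. By the uniqueness of the Hodge decomposition, matching the co-exact summands gives the single key relation $d^*\varphi = *(d\theta) = *d\theta$ (equivalently $*(d^*\varphi) = d\theta$, using $*^2 = \id$ on $2$-forms). Feeding this into the self-dual and anti-self-dual projectors gives $(d^*\varphi)_g^+ = \tfrac12(*d\theta + d\theta) = (d\theta)_g^+$ and $(d^*\varphi)_g^- = \tfrac12(*d\theta - d\theta) = -(d\theta)_g^-$, which are exactly the two asserted equalities.

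Finally, the ``in particular'' statements are pure bookkeeping. From $d^*\varphi = (d\theta)_g^+ - (d\theta)_g^-$ together with $d\theta = (d\theta)_g^+ + (d\theta)_g^-$ we get $d\theta + d^*\varphi = 2(d\theta)_g^+$, so $\alpha - 2(d\theta)_g^+ = \alpha_h$ is harmonic; adding $2(d\theta)_g^-$ instead gives $\alpha + 2(d\theta)_g^- = \alpha_h + 2d\theta$, a sum of a harmonic and an exact form, hence closed. The only genuine care needed anywhere is pinning down the dimension-four sign conventions for $*d*$ and $*^2$; everything else is forced by orthogonality and uniqueness of the Hodge decomposition.
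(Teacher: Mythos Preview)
Your argument is correct: the key observation is that in real dimension four the Hodge star is an involution on $2$-forms, sends harmonic forms to harmonic forms, and interchanges the exact and co-exact summands; uniqueness of the Hodge decomposition then forces $d^*\varphi = *d\theta$, from which both identities and the two consequences follow exactly as you describe. Your sign checks are fine (on a $4$-manifold $*^2=\id$ on $2$-forms, $*^2=-\id$ on $1$-forms, and $d^*=-*d*$, giving $*(d\theta)=d^*(*\theta)$ and $*(d^*\varphi)=d(-*\varphi)$).

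Note that the paper does not supply its own proof of this lemma: it is quoted from \cite[Lemma~2.4]{DLZ} and stated without argument. Your proof is essentially the standard one and is presumably what the cited reference contains, so there is no alternative approach in the present paper to compare against.
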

Let $\mathcal{H}_g^{\pm}$ be the spaces of harmonic self-dual, respectively, anti-self-dual forms. Denote by $\{\omega_1,\dots,\omega_s\}$ a basis of $\mathcal{H}_{g}^+$. For every $k\in\{1,\dots, s\}$, the harmonic form $\omega_k$ decomposes as $\omega_k=h_{k}\omega+\omega_{k}^{-}$, where $h_k\in\mathcal{C}^{\infty}(M)$ and $\omega_k^-$ is the projection of $\omega_k$ onto $\bigwedge_{J}^-$.

The following lemma ensures that we are always allowed to consider a basis of $\mathcal{H}_{g}^{+}(\tilde{M})$ such that the $L^2$-product
\begin{equation}\label{eq:ort_cond_omega_s-}
\langle \omega_l^-,\omega_m^-\rangle =0, \qquad  l\neq m.
\end{equation}
\begin{lemma}\label{lem:ort_cond_omega_s}
Given a basis $\{\omega_1,\dots, \omega_s\}$ of $\mathcal{H}_{g}^{+}$, the set $\{\omega_{1}',\dots,\omega_{s}'\}$ inductively defined by
$$
\omega_1'=\omega_1,\quad 
\omega_k'=\omega_k-\sum_{n=1, \, \omega_n^{'-}\neq 0}^{k-1}\frac{\langle\omega_{k}^-,\omega_n^{'-}\rangle}{\langle\omega_{n}^{'-},\omega_n^{'-}\rangle}\omega_{n}^{'}, \quad k\in\{1,\dots, s\}.
$$
is a basis of $\mathcal{H}_{\tilde{g}}^{+}$ which satisfies \eqref{eq:ort_cond_omega_s-}.
\end{lemma}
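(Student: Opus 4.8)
The inductive formula is nothing but the Gram--Schmidt procedure, carried out not on the harmonic forms $\omega_k$ themselves but on their anti-invariant parts $\omega_k^-$, with the summands indexed by those $n$ for which $\omega_n^{'-}=0$ simply omitted. So the proof has two essentially independent ingredients: checking that $\{\omega_1',\dots,\omega_s'\}$ is again a basis made of harmonic self-dual forms, and checking that the anti-invariant parts $\omega_k^{'-}:=(\omega_k')^-$ are mutually $L^2$-orthogonal.

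For the first ingredient, note that $\omega_k'$ equals $\omega_k$ minus a linear combination of $\omega_1',\dots,\omega_{k-1}'$, so a downward induction shows $\Span_\C\langle\omega_1',\dots,\omega_k'\rangle=\Span_\C\langle\omega_1,\dots,\omega_k\rangle$ for every $k$; equivalently, the matrix expressing the $\omega_k'$ in terms of the $\omega_j$ is lower triangular with $1$'s on the diagonal, hence invertible. Moreover a linear combination of harmonic self-dual forms is itself harmonic and self-dual, so each $\omega_k'$ lies in $\mathcal{H}_g^+$. Thus $\{\omega_1',\dots,\omega_s'\}$ is a basis of $\mathcal{H}_g^+$. (The metric label plays no role: everything from here on is linear algebra, granted only the bundle splitting $\bigwedge_g^+=\R\langle\omega\rangle\oplus\bigwedge_J^-$ and the positive definiteness of the fixed $L^2$ inner product; the latter guarantees $\langle\omega_n^{'-},\omega_n^{'-}\rangle\neq0$ whenever $\omega_n^{'-}\neq0$, so the coefficients in the recursion are well defined.)

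For the second ingredient, apply the fibrewise linear projection of $\bigwedge_g^+$ onto its summand $\bigwedge_J^-$ to the defining relation; since every $\omega_k$ and $\omega_k'$ is self-dual, this yields
\[
\omega_k^{'-}=\omega_k^- - \sum_{\substack{n=1\\ \omega_n^{'-}\neq 0}}^{k-1}\frac{\langle\omega_k^-,\omega_n^{'-}\rangle}{\langle\omega_n^{'-},\omega_n^{'-}\rangle}\,\omega_n^{'-},
\]
i.e.\ exactly one Gram--Schmidt step performed on the $\omega_k^-$. I would then prove by induction on $k$ the statement that the members of $\{\omega_n^{'-}:1\le n\le k,\ \omega_n^{'-}\neq0\}$ are pairwise $L^2$-orthogonal. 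The case $k=1$ is vacuous. For the step, assuming $\omega_k^{'-}\neq0$ (otherwise there is nothing new to verify), take any $l<k$ with $\omega_l^{'-}\neq0$, pair the displayed identity with $\omega_l^{'-}$, and invoke the inductive hypothesis: it kills $\langle\omega_n^{'-},\omega_l^{'-}\rangle$ for all $n<k$ with $n\neq l$ and $\omega_n^{'-}\neq0$, so the sum reduces to its $n=l$ term, which equals $\langle\omega_k^-,\omega_l^{'-}\rangle$ and therefore cancels the first term. Hence $\langle\omega_l^{'-},\omega_k^{'-}\rangle=0$. Since any pair $l\neq m$ for which $\omega_l^{'-}$ or $\omega_m^{'-}$ vanishes is trivially orthogonal, \eqref{eq:ort_cond_omega_s-} follows for all $l\neq m$.

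The one genuinely delicate point --- and hence the (rather mild) main obstacle --- is precisely the bookkeeping around the indices $n$ with $\omega_n^{'-}=0$: one must notice that deleting those terms leaves the triangular structure of the change of basis untouched, so the basis property is preserved, and contributes nothing to the orthogonality induction, so that is preserved as well. No analytic input is needed; the lemma reduces, once the linearity of the $\bigwedge_J^-$-projection and the stability of harmonicity and self-duality under linear combinations are recorded, to the standard Gram--Schmidt argument with possibly degenerate steps.
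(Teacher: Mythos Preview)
Your proof is correct and follows essentially the same Gram--Schmidt argument as the paper. The only cosmetic difference is that you first apply the linear projection onto $\bigwedge_J^-$ to obtain a recursion for the $\omega_k^{'-}$ and then pair, whereas the paper pairs $\omega_l'$ directly against $\omega_m^{'-}$ and invokes the orthogonality of $J$-invariant and $J$-anti-invariant $2$-forms (their wedge vanishes for bidegree reasons) to pass from $\langle\omega_l',\omega_m^{'-}\rangle$ and $\langle\omega_l,\omega_m^{'-}\rangle$ to $\langle\omega_l^{'-},\omega_m^{'-}\rangle$ and $\langle\omega_l^{-},\omega_m^{'-}\rangle$; these are two phrasings of the same step.
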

\begin{proof}
Clearly, $\{\omega_k'\}_{k=1}^s$ is a basis of $\mathcal{H}_g^{+}$. We check that \eqref{eq:ort_cond_omega_s-} holds.

Suppose that $1\leq m < l \leq s$ and, by induction, that $\langle\omega_m^{'-},\omega_n^{'-}\rangle=0$, for all $n\neq m$, $n,m\leq l-1$. Then
\begin{align*}
\langle \omega_l^{'-},\omega_m^{'-}\rangle &=\langle(\omega_l-\sum_{n=1, \, \omega_n^{'-}\neq 0}^{l-1}\frac{\langle\omega_{l}^-,\omega_n^{'-}\rangle}{\langle\omega_{n}^{'-},\omega_n^{'-}\rangle}\omega_{n}^{'}), \omega_m^{'-}\rangle\\
&=\langle \omega_l, \omega_m^{'-}\rangle - \sum_{n=1, \, \omega_n^{'-}\neq 0}^{l-1}\frac{\langle\omega_{l}^-,\omega_n^{'-}\rangle}{\langle\omega_{n}^{'-},\omega_n^{'-}\rangle}\langle\omega_{n}^{'}, \omega_m^{'-}\rangle\\
&=\langle \omega_l, \omega_m^{'-}\rangle-\langle\omega_{l}^-,\omega_m^{'-}\rangle=0,
\end{align*}
since the product of $J$-invariant and $J$-anti-invariant forms of degree 2 is zero by bedegree reasons, and hence $\langle \omega_l, \omega_m^{'-}\rangle=\langle \omega_l^-, \omega_m^{'-}\rangle$.
\end{proof}

Before proceeding with the proof Theorem \ref{almost-kaehler-4-dim}, we start by observing that the projection map $\pi:\tilde{M}_x\to M$ is a pseudobiholomorphism away from the exceptional divisor, that is $\pi:(\tilde{M}_x\setminus \pi^{-1}(x),\tilde{J})\to (M\setminus \{x\},J)$ is a pseudobiholomorphism. Accordingly, $\pi^*\omega$ gives rise to a closed positive $(1,1)$-form on $(\tilde{M}_x\setminus \pi^{-1}(x),\tilde{J})$ and $\pi^*\omega$ degenerates on $ \pi^{-1}(x)$. Therefore we need to correct $\pi^*\omega$ by adding a closed positive $(1,1)$-form supported on a neighbourhood of $ \pi^{-1}(x)$. To this purpose we will adapt an argument by Taubes in \cite{T}. On 
$$\mathcal{B}:=\Big\{\Big( (z_1,z_2),[v_1:v_2]\Big)\in\mathcal{U}\times\mathbb{P}^{1}(\C)\,\,\,\vert\,\,\, z_1v_2-z_2v_1=0\Big\}
$$
we have the two local charts $\psi_1:\C^2\to \mathcal{B}$, $\psi_2:\C^2\to\mathcal{B}$ defined as 
$$
\psi_1(z,w)=\Big((z,zw),[1:w]\Big),\quad
\psi_2(w,z)=\Big((zw,z),[w:1]\Big).
$$
Notice that  
$$\psi_2^{-1}\circ\psi_1\vert_{\psi_1^{-1}(\psi_1(\C^2)\cap\psi_2(\C^2))}(\zeta_1,\zeta_2)=\Big(\frac{1}{\zeta_2},\zeta_1\zeta_2\Big).$$
Hence $\mathcal{B}$ has a natural complex structure given by the previous local holomorphic charts $(\psi_j,\C^2)$, $j=1,2$, whose local holomorphic coordinates are denoted by $z=x+iy$, $w=u+iv$. Let us denote by $d'$ and $d''$ the $(1,0)$ respectively $(0,1)$-part of the exterior derivative $d$ with respect to such a complex structure on $\mathcal{B}$.

Let $f\colon (0,+\infty)\rightarrow \R$ any smooth function. Let $\mu\colon \C^2\rightarrow [0,+\infty)$ be given by $\mu(\zeta_1,\zeta_2)=|\zeta_1|^2+|\zeta_2|^2$. Then define
$
F\colon \mathcal{B}\setminus E\rightarrow \R
$
by $$F:=(p_1^{\mathcal{B}})^*\mu^*f.$$

Computing $id' d'' F$ in the chart $(\psi_1,\C^2)$, we obtain the following 
\begin{lemma}\label{lemma-1}
The form 
$
id' d'' \psi_1^*f
$ is closed and it is given by the following formula when $z\neq 0$:
\begin{eqnarray}\label{eq:d'd''psi_f}
id' d'' \psi_1^*F &=& i\Big\{\Big[f''\vert z\vert^2(1+\vert w\vert^2)^2 + f'(1+\vert w\vert^2)\Big]dz\wedge d\bar{z} +\\
&+& \big[f'' \vert z\vert^2 (1+\vert w\vert^2)\bar{z}w+ f'\bar{z}w\Big] dz\wedge d\bar{w}+\nonumber \\
&+& \big[f'' \vert z\vert^2 (1+\vert w\vert^2)z\bar{w}+ f'z\bar{w}\Big] dw\wedge d\bar{z}+\nonumber \\
&+& \Big[ f''\vert z\vert^4 \vert w\vert^2 + f'\vert z\vert^2\Big]dw\wedge d\bar{w}\Big\},\nonumber
\end{eqnarray}
where $f''$ and $f'$ are evaluated at $\vert z\vert^2(1+\vert w\vert^2)$.
\end{lemma}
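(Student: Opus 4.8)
\textbf{Proof proposal for Lemma \ref{lemma-1}.}

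The plan is to carry out a direct computation of $id'd''\psi_1^*F$ in the holomorphic chart $(\psi_1,\C^2)$ with coordinates $(z,w)$, using the explicit description of $p_1^{\mathcal B}$ in these coordinates, namely $p_1^{\mathcal B}(z,w)=(z,zw)$, and the definition $F=(p_1^{\mathcal B})^*\mu^*f$ with $\mu(\zeta_1,\zeta_2)=|\zeta_1|^2+|\zeta_2|^2$. First I would pull back $\mu$ along $p_1^{\mathcal B}$ to obtain the explicit scalar function on the chart: since $\zeta_1=z$ and $\zeta_2=zw$, we get $\psi_1^*F=f\bigl(|z|^2+|z|^2|w|^2\bigr)=f\bigl(|z|^2(1+|w|^2)\bigr)$. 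Set $\rho:=|z|^2(1+|w|^2)$, so $\psi_1^*F=f(\rho)$ and everything reduces to differentiating the composite $f\circ\rho$.

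Next I would compute $d''\psi_1^*F=\del_{\bar z}(f\circ\rho)\,d\bar z+\del_{\bar w}(f\circ\rho)\,d\bar w=f'(\rho)\,\rho_{\bar z}\,d\bar z+f'(\rho)\,\rho_{\bar w}\,d\bar w$, where $\rho_{\bar z}=z(1+|w|^2)$ and $\rho_{\bar w}=|z|^2 w$. Then applying $d'$ and multiplying by $i$ produces four terms, indexed by the pairs $dz\wedge d\bar z$, $dz\wedge d\bar w$, $dw\wedge d\bar z$, $dw\wedge d\bar w$; each coefficient is obtained by the product rule as $f''(\rho)$ times a product of two first derivatives of $\rho$, plus $f'(\rho)$ times a second derivative of $\rho$. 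The needed derivatives are $\rho_z=\bar z(1+|w|^2)$, $\rho_w=|z|^2\bar w$, $\rho_{z\bar z}=1+|w|^2$, $\rho_{z\bar w}=\bar z w$ (and its conjugate-type partner $\rho_{w\bar z}=z\bar w$), $\rho_{w\bar w}=|z|^2$. Substituting, the $dz\wedge d\bar z$ coefficient becomes $f''\,\rho_z\rho_{\bar z}+f'\,\rho_{z\bar z}=f''|z|^2(1+|w|^2)^2+f'(1+|w|^2)$, the $dz\wedge d\bar w$ coefficient becomes $f''\,\rho_z\rho_{\bar w}+f'\,\rho_{z\bar w}=f''|z|^2(1+|w|^2)\bar z w+f'\bar z w$, the $dw\wedge d\bar z$ coefficient is the analogous expression with $z\bar w$, and the $dw\wedge d\bar w$ coefficient is $f''\,\rho_w\rho_{\bar w}+f'\,\rho_{w\bar w}=f''|z|^4|w|^2+f'|z|^2$, with $f',f''$ evaluated at $\rho=|z|^2(1+|w|^2)$. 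This matches \eqref{eq:d'd''psi_f} exactly.

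Closedness is immediate and I would note it separately: for any smooth function $g$ on a complex manifold the $2$-form $id'd''g$ is real and $d$-closed, because $d(id'd''g)=i(d'+d'')d'd''g=i\,d''d'd''g + i\,d'd'd''g$ and $d'd'=0$, $d''d''=0$ together with $d'd''=-d''d'$ force this to vanish; here the ambient complex structure on $\mathcal B$ is the honest integrable one described via the transition map $\psi_2^{-1}\circ\psi_1(\zeta_1,\zeta_2)=(1/\zeta_2,\zeta_1\zeta_2)$, so $d=d'+d''$ with $(d')^2=(d'')^2=0$ genuinely holds. I do not anticipate a serious obstacle: the only mild care needed is bookkeeping in the product rule and the observation that $\rho$ is real so that $\overline{\rho_z}=\rho_{\bar z}$, which keeps the resulting form manifestly real; the computation is valid wherever $z\neq 0$ so that $(z,w)$ are genuine coordinates pulled back from $\mathcal U\setminus\{0\}$, which is exactly the stated range.
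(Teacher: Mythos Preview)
Your proposal is correct and follows essentially the same approach as the paper: set $\rho=|z|^2(1+|w|^2)$, compute $d''(f\circ\rho)$, then apply $d'$ via the product rule to read off the four coefficients. You are in fact a bit more explicit than the paper (which stops at \eqref{d'psi_f} and says ``one easily obtains''), and your separate remark on closedness via $(d')^2=(d'')^2=0$, $d'd''=-d''d'$ is a welcome addition that the paper leaves implicit.
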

\begin{proof}
Let $(z,w)$ be local coordinates on $\C^2$, so that the local expression of $\psi_1^*F$ is $\psi_1^*F(z,w)=f\left(|z|^2(1+|w|^2)\right)$. Let us set $t:=|z|^2(1+|w|^2)$. Then, we compute first
\begin{align}\label{d'psi_f}
id''\psi_1^*F&=if'\cdot (\frac{\partial t}{\del \bar{z}}d\bar{z}+\frac{\partial t}{\del \bar{w}}d\bar{w})\\
&=i f'\cdot\left((1+|w|^2)zd\overline{z}+|z|^2wd\overline{w}\right).\nonumber
\end{align}
By applying $d'$ to \eqref{d'psi_f}, one easily obtains \eqref{eq:d'd''psi_f}.
\end{proof}
\begin{rem}\label{rem:fubini_study}
Note that when $f=\log (\vert\zeta_1\vert^2+\vert\zeta_2\vert^2)$, then $id' d'' F$ extends to $\mathcal{B}$ and, once restricted to $\mathbb{P}^1(\C)$, it coincides with  $\omega_{FS}$ on  in the affine coordinate $w=\frac{v_2}{v_1}$.
\end{rem}
Now we want to compute the $\tilde{J}$-anti-invariant component of $id' d'' \psi_1^*f$. We have the following
\begin{lemma}\label{lemma-2}
Set $B_2=P+iQ$. Then the $\tilde{J}$-anti-invariant component of $id' d'' \psi_1^*F$ is expressed by
\begin{eqnarray}\label{eq:J_ant_inv_d'd''psi_f}
i\Big(\psi_1^*d' d'' F- \tilde{J}\psi_1^* d' d'' F  \Big) &=& 2\Big[ f'' \vert z\vert^2(1+\vert w\vert^2)^2 + f' (1+\vert w\vert^2)\Big]\Big[ P(dx\wedge du-dy\wedge dv)\\
&+& Q(dx\wedge dv + dy\wedge du) +(P^2+Q^2)du\wedge dv\Big],\nonumber
\end{eqnarray}
where $f''$ and $f'$ are evaluated at $\vert z\vert^2(1+\vert w\vert^2)$.
\end{lemma}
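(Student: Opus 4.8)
The plan is to compute the $\tilde J$-anti-invariant component of the closed $(1,1)$-form (with respect to the complex structure on $\mathcal{B}$) given by Lemma~\ref{lemma-1} directly, using the explicit shape of $\tilde{\mathbb{J}}$ obtained in Proposition~\ref{lemma-3}. Recall from \eqref{eq:tilde_j_strong_cond} that in the chart $(\psi_1,\C^2)$ the almost complex structure $\tilde{\mathbb{J}}$ differs from the standard one only in the $(1,2)$-entry $(0,B^{(2)})$, with $B^{(2)}=B_2=P+iQ$. So I would first write down the action of $\tilde J$ on $2$-forms via $\tilde J\alpha(X,Y)=\alpha(\tilde JX,\tilde JY)$, record $\tilde J$ on the real basis $\partial_x,\partial_y,\partial_u,\partial_v$ of $T_{(z,w)}\C^2$: here $\tilde J\partial_x=\partial_y$, $\tilde J\partial_y=-\partial_x$ are unchanged, while $\tilde J\partial_u$ and $\tilde J\partial_v$ acquire correction terms in $\partial_x,\partial_y$ coming from $B^{(2)}$ acting on $\bar u_2$ in the multiplication rule \eqref{multiplication}. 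Unwinding $B_2\cdot b = B^{(2)}\bar b$ for $b=u+iv$ gives $\tilde J\partial_u = \partial_v + (\text{linear in }\partial_x,\partial_y\text{ with coefficients }P,Q)$ and $\tilde J\partial_v = -\partial_u + (\dots)$.

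Next I would apply $\tilde J$ to each of the four basis $2$-forms appearing in \eqref{eq:d'd''psi_f}, namely $dz\wedge d\bar z$, $dz\wedge d\bar w$, $dw\wedge d\bar z$, $dw\wedge d\bar w$, expressing everything in the real basis $dx,dy,du,dv$ and then collecting the result $\psi_1^*d'd''F - \tilde J\psi_1^*d'd''F$. The key simplification is that $\tilde J$ fixes $dx,dy$ (and hence $dx\wedge dy$) and acts as $i$ on the ``$z$-part'', so the invariant pieces cancel in the difference and only the terms that mix $du,dv$ with $dx,dy$ (and the pure $du\wedge dv$ term, which gets a $|B_2|^2$ contribution) survive. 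One checks that $dz\wedge d\bar z$ is already $\tilde J$-invariant, that the cross terms $dz\wedge d\bar w$ and $dw\wedge d\bar z$ contribute the $P(dx\wedge du - dy\wedge dv) + Q(dx\wedge dv + dy\wedge du)$ combination, and that $dw\wedge d\bar w$ contributes both a cross term and the $(P^2+Q^2)\,du\wedge dv$ term; the overall scalar factor $2\big[f''|z|^2(1+|w|^2)^2 + f'(1+|w|^2)\big]$ is exactly the coefficient of $dz\wedge d\bar z$ in \eqref{eq:d'd''psi_f}, up to the bookkeeping that the $dz\wedge d\bar w$ and $dw\wedge d\bar z$ coefficients are $\bar z w$, $z\bar w$ times that of $dz\wedge d\bar z$ while the $dw\wedge d\bar w$ coefficient is $|z|^2|w|^2$ times it — and these ratios are precisely what is needed for the anti-invariant parts to combine into the stated homogeneous expression. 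I would carry out this collection carefully since sign conventions in $\tilde J\alpha(X,Y)=\alpha(\tilde JX,\tilde JY)$ and in passing between complex and real $2$-forms are the most error-prone part.

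The main obstacle, then, is purely computational bookkeeping: getting the correction terms in $\tilde J\partial_u,\tilde J\partial_v$ right from \eqref{multiplication} (remembering $B_2\cdot b$ means $B^{(2)}\bar b$, so a $u+iv$ turns into something involving $\bar b = u - iv$, which is what produces the real coefficients $P,Q$ in front of $dx,dy$), and then tracking the $|z|, |w|$ weights so that the four different coefficient functions in \eqref{eq:d'd''psi_f} collapse to a single common factor times the universal form in $P,Q$. I expect no conceptual difficulty: the statement is an identity between explicit tensors, and the proof is ``expand $\tilde J\psi_1^*d'd''F$, subtract, simplify.'' Finally I would note, as a sanity check, that setting $B_2=0$ (the integrable/line-condition-with-vanishing-$B^{(2)}$ case) makes the right-hand side vanish, consistent with $id'd''F$ being of type $(1,1)$ for the standard structure, and that the surviving form is manifestly real, as it must be.
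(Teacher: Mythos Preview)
Your overall plan coincides with the paper's: write $\tilde{\mathbb J}$ in real coordinates via Proposition~\ref{lemma-3}, apply it to each of the four basis $2$-forms in \eqref{eq:d'd''psi_f}, and take the difference. But you have the invariance pattern exactly backwards, and this is not just a sign slip --- it changes the structure of the computation.

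You write that ``$\tilde J$ fixes $dx,dy$ (and hence $dx\wedge dy$)'', so that $dz\wedge d\bar z$ is $\tilde J$-invariant while the cross terms and $dw\wedge d\bar w$ carry the anti-invariant contribution. The opposite is true. On \emph{vectors} you correctly note that $\tilde J\partial_x=\partial_y$, $\tilde J\partial_y=-\partial_x$, while $\tilde J\partial_u,\tilde J\partial_v$ pick up $\partial_x,\partial_y$ corrections. Dually, on $1$-forms, it is $du,dv$ that are rotated purely among themselves ($\tilde Jdu=-dv$, $\tilde Jdv=du$), whereas $\tilde Jdx=-dy+P\,du+Q\,dv$ and $\tilde Jdy=dx+Q\,du-P\,dv$ acquire the corrections. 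Consequently $dw\wedge d\bar w$, $dz\wedge d\bar w$ and $dw\wedge d\bar z$ are all $\tilde J$-invariant, while $dz\wedge d\bar z=-2i\,dx\wedge dy$ is the \emph{only} term with nontrivial anti-invariant part: one computes
\[
\tilde J(dx\wedge dy)=dx\wedge dy -P\,(dx\wedge du-dy\wedge dv)-Q\,(dx\wedge dv+dy\wedge du)-(P^2+Q^2)\,du\wedge dv.
\]
This is why the scalar prefactor in \eqref{eq:J_ant_inv_d'd''psi_f} is \emph{simply} the $dz\wedge d\bar z$ coefficient from \eqref{eq:d'd''psi_f}; there is no ``combining'' of the other three coefficients --- your paragraph about the $\bar z w$, $z\bar w$, $|z|^2|w|^2$ ratios is trying to explain a cancellation that never happens (and those ratios are not quite right either). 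Once you reverse the roles of $dz\wedge d\bar z$ and $dw\wedge d\bar w$, the computation is a one-liner and matches the paper's proof verbatim.
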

\begin{proof}
We start by computing $\tilde{J}d'd''\psi_1^*F$. Let $\tilde{\mathbb{J}}$ be the local expression in $(\psi_1)^{-1}(\mathcal{B})\subset\C^2$ of  $\tilde{J}$ on $\tilde{M}_x$. Since $J$ satisfies the line condition at $x\in M$, by Proposition \ref{lemma-3} the almost complex structure $\tilde{\mathbb{J}}$ at $(z,w)$ is represented by the matrix
\begin{equation*}
\tilde{\mathbb{J}}_{(z,w)}=
\begin{pmatrix}
(i,0) & (0,B_2)\vspace{0.2cm}\\
(0,0) & (i,0)
\end{pmatrix}
\end{equation*}
where $B_2=B_2(z,w)$. By passing to the real underlying structure, setting $B_2=P+iQ$, $z=x+iy$, $w=u+iv$, and identifying the vector fields $\del_z=\del_x+i\del_y$, $\del_w=\del_u+i\del_y$, we obtain that the almost complex structure $\tilde{J}$ at $(x,y,u,v)$ is given by
\begin{equation}
\tilde{\mathbb{J}}_{(x,y,u,v)}=
\begin{pmatrix}
0 & -1 & P & Q\\
1 & 0 & Q & -P\\
0 & 0 & 0 & -1\\
0 & 0 & 1 & 0
\end{pmatrix}
\end{equation}
and, hence, the induced almost complex structure on forms (still denoted by $\tilde{\mathbb{J}}$) is represented by
\begin{equation}
\tilde{\mathbb{J}}_{(x,y,u,v)}=
\begin{pmatrix}
0 & 1 & 0 & 0\\
-1 & 0 & 0 & 0\\
P & Q & 0 & 1\\
Q & -P & -1 & 0
\end{pmatrix}.
\end{equation}
Since $\tilde{J}$ is $\mathcal{C}^{\infty}$-linear, we can compute the action of $\tilde{\mathbb{J}}$ directly on forms, taking into account the expression of $id'd''\psi_1^*f$ computed in Lemma \ref{lemma-1}. It is immediate to check that the forms
\begin{align*}
dz\wedge d\bar{w}&=(dx\wedge du+dy\wedge dv),\\
dw\wedge d\bar{z}&=-(dx\wedge du+dy\wedge dv)+i(-dx\wedge dv +dy\wedge du),\\
dw\wedge d\bar{w}&=-2i du\wedge dv
\end{align*}
are $\tilde{\mathbb{J}}$-invariant, i.e., $\tilde{\mathbb{J}}(dz\wedge d\bar{w})=dz\wedge d\bar{w}$ and so on, whereas the form
\[
dz\wedge d\bar{z}=-2i dx\wedge dy
\]
is such that
\[
\tilde{\mathbb{J}}(dz \wedge d\bar{z})=-2i(dx\wedge dy +P(-dx\wedge du+ dy\wedge dv)+Q(-dx\wedge dv- dy\wedge du)-(P^2+Q^2)du\wedge dv).
\]
As a direct consequence, the expression for the $\tilde{J}$-anti-invariant component $i(d'd''\psi_1^*F-\tilde{J}d'd''\psi_1^*F)$ of $id'd''\psi_1^*F$ is exactly \eqref{eq:J_ant_inv_d'd''psi_f}.
\end{proof}
We want to bound the $\tilde{J}$-anti-invariant component of $id' d'' F$. The following two lemma will be useful
\begin{lemma}\label{lemma-4}
Let $0<\varepsilon <1$ and $0<\delta << 1$. Let $\eta=\delta e^{\frac{1}{\varepsilon}}$ Then for $\delta\leq x\leq \eta$, the function 
\begin{equation}\label{function-g}
g(x)=\frac{\varepsilon\log x}{x}-\frac{1+\varepsilon\log\delta}{x}
\end{equation}
satisfies 
\begin{equation}\label{eq:diff_eq_estimates}
g'(x)x+g(x)=\frac{\varepsilon}{x}, 
\end{equation}
and
\begin{equation}\label{eq:diff_eq_boundary_cond}
g(\delta)=-\frac{1}{\delta}, \quad g(\eta)=0.
\end{equation}
\end{lemma}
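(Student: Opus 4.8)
Let $0<\varepsilon<1$ and $0<\delta\ll 1$. Let $\eta=\delta e^{1/\varepsilon}$. Then for $\delta\le x\le\eta$, the function
$$
g(x)=\frac{\varepsilon\log x}{x}-\frac{1+\varepsilon\log\delta}{x}
$$
satisfies $g'(x)x+g(x)=\dfrac{\varepsilon}{x}$, and $g(\delta)=-\dfrac{1}{\delta}$, $g(\eta)=0$.

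\medskip

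The plan is simply to verify the three displayed assertions by direct computation, since the statement is purely a self-contained calculus fact and there is no genuine obstacle — the only thing to be careful about is bookkeeping with the $\log$ terms. First I would rewrite $g$ in the consolidated form
$$
g(x)=\frac{\varepsilon\log x-1-\varepsilon\log\delta}{x}=\frac{\varepsilon\log(x/\delta)-1}{x},
$$
which makes the boundary values transparent. Evaluating at $x=\delta$ gives $\log(\delta/\delta)=0$, so $g(\delta)=(0-1)/\delta=-1/\delta$; evaluating at $x=\eta=\delta e^{1/\varepsilon}$ gives $\log(\eta/\delta)=1/\varepsilon$, so the numerator is $\varepsilon\cdot(1/\varepsilon)-1=0$, whence $g(\eta)=0$. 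This disposes of \eqref{eq:diff_eq_boundary_cond}.

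For the differential identity \eqref{eq:diff_eq_estimates}, I would differentiate $g$ using the quotient rule on $g(x)=\bigl(\varepsilon\log(x/\delta)-1\bigr)x^{-1}$:
$$
g'(x)=\frac{\varepsilon\cdot\frac1x\cdot x-\bigl(\varepsilon\log(x/\delta)-1\bigr)}{x^{2}}=\frac{\varepsilon-\varepsilon\log(x/\delta)+1}{x^{2}}.
$$
Then
$$
g'(x)x+g(x)=\frac{\varepsilon-\varepsilon\log(x/\delta)+1}{x}+\frac{\varepsilon\log(x/\delta)-1}{x}=\frac{\varepsilon}{x},
$$
as the $\varepsilon\log(x/\delta)$ and $\mp 1$ terms cancel, giving \eqref{eq:diff_eq_estimates}.

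The restriction $\delta\le x\le\eta$ plays no role in the identities themselves; it is recorded because this is the interval on which $g$ will later be used as an interpolating profile (monotonically connecting the boundary data $-1/\delta$ at $x=\delta$ to $0$ at $x=\eta$), and the hypotheses $0<\varepsilon<1$, $0<\delta\ll 1$ are what will make $\eta$ and the associated estimates behave well in the subsequent gluing argument. No step here presents any difficulty; the computation above is complete.
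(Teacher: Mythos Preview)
Your proof is correct and follows essentially the same approach as the paper: a direct computation of $g'(x)$ and verification of the boundary values. Your consolidation $g(x)=\bigl(\varepsilon\log(x/\delta)-1\bigr)/x$ is a minor notational convenience, but otherwise the arguments are identical.
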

\begin{proof}
By immediate computations, it turns out for the function $g(x)$ as defined in \eqref{function-g}, satisfies
\[
g'(x)=\frac{\varepsilon-\varepsilon\log{x}}{x^2}+\frac{1+\varepsilon\log{\delta}}{x^2},
\]
so that equation \eqref{eq:diff_eq_estimates} holds for such $g$. Moreover, it is easy to check that the boundary conditions \eqref{eq:diff_eq_boundary_cond} are satisfied.
\end{proof}

\begin{lemma}\label{lemma-5} Let
$\gamma$ and $\nu$ be bump functions with the following properties.

$\gamma(x) =1$ if $x \le \delta$, $\gamma(x) =0$ if $x \ge 2\delta$, $|\gamma'| < \frac{2}{\delta}$, $|\gamma''| < \frac{4}{\delta^2}$;

$\nu(x) =1$ if $x \le \frac{\eta}{2}$, $\nu(x) =0$ if $x \ge \eta$, $|\nu'| < \frac{4}{\eta}$, $|\nu''| < \frac{16}{\eta^2}$.

Then we define
$$\tilde{g}(x) = \nu(x)( g(x) + \gamma(x)(\frac{1}{x} - g)).$$

We have the following.

If $x \le 2\delta$ then $|x\tilde{g}'(x) + \tilde{g}(x)| < \frac{C}{x}$ and $|(x\tilde{g}'(x) + \tilde{g}(x))'| < \frac{C}{x^2}$;

if  $2\delta \le x \le \frac{\eta}{2}$ then $|xg'(x) + g(x)| = \frac{\epsilon}{x}$ and $|(xg'(x) + g(x))'| = \frac{\epsilon}{x^2}$;

if $\frac{\eta}{2}\le x \le \eta$ then $|x\tilde{g}'(x) + \tilde{g}(x)| < \frac{C}{x}$ and $|(x\tilde{g}'(x) + \tilde{g}(x))'| < \frac{C}{x^2}$.

\end{lemma}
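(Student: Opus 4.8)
The plan is to collapse the differential operator $u\mapsto xu'+u$ into a single derivative: for any smooth $u$ one has $xu'(x)+u(x)=(xu(x))'$, so that $x\tilde g'+\tilde g=(x\tilde g)'$ and $(x\tilde g'+\tilde g)'=(x\tilde g)''$, and the middle assertion $|xg'+g|=\varepsilon/x$, $|(xg'+g)'|=\varepsilon/x^2$ is merely a restatement of Lemma \ref{lemma-4}, which gives $(xg)'=\varepsilon/x$ and hence $(xg)''=-\varepsilon/x^2$ on $[\delta,\eta]$. First I would record the explicit identity $xg(x)=\varepsilon\log(x/\delta)-1$, from which $xg$ increases monotonically from $-1$ at $x=\delta$ to $0$ at $x=\eta$; in particular $|xg(x)|\le 1$ for all $x\in[\delta,\eta]$. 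I would also observe that in the regime of interest $\varepsilon$ is small, so we may assume $e^{1/\varepsilon}>4$, equivalently $4\delta<\eta/2$; then the three stated ranges $x\le 2\delta$, $2\delta\le x\le\eta/2$, $\eta/2\le x\le\eta$ cover $(0,\eta]$, and on the first $\nu\equiv 1$ while on the third $\gamma\equiv 0$.

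On the outer shell $\eta/2\le x\le\eta$, where $\gamma\equiv 0$, one gets $x\tilde g=\nu\,(xg)$, hence $(x\tilde g)'=\nu'\,xg+\nu\,\varepsilon/x$ and $(x\tilde g)''=\nu''\,xg+2\nu'\,\varepsilon/x-\nu\,\varepsilon/x^2$. Since $x\le\eta$ we have $|\nu'|<4/\eta\le 4/x$ and $|\nu''|<16/\eta^2\le 16/x^2$, which together with $|xg|\le 1$ yields $|(x\tilde g)'|<C/x$ and $|(x\tilde g)''|<C/x^2$. On the inner shell $x\le 2\delta$, where $\nu\equiv 1$, writing the correction term out gives $x\tilde g=xg+\gamma(1-xg)$, so $(x\tilde g)'=(1-\gamma)\,\varepsilon/x+\gamma'(1-xg)$; for $x\le\delta$ this is identically $0$ since $\gamma\equiv 1$, while for $\delta\le x\le 2\delta$ the bounds $|\gamma'|<2/\delta\le 4/x$, $|\gamma''|<4/\delta^2\le 16/x^2$ (using $1/\delta\le 2/x$), $|xg|\le 1$, and $(xg)'=\varepsilon/x$, $(xg)''=-\varepsilon/x^2$ give $|(x\tilde g)'|<C/x$ and, after differentiating once more, $|(x\tilde g)''|<C/x^2$. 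This covers all three cases with an absolute constant $C$ depending only on the numerical constants in the bump-function estimates.

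The computations are entirely routine, so the proof is really a matter of bookkeeping; the points that need care are: recognizing the telescoping $xu'+u=(xu)'$, which turns the piecewise definition of $\tilde g$ into something tractable; checking that the cutoff thresholds $2\delta$ and $\eta/2$ are correctly ordered, so that on each shell at most one of $\gamma,\nu$ is in its transition region — this is where the smallness condition $4\delta<\eta/2$ enters; and the a priori bound $|xg|\le 1$ on the whole of $[\delta,\eta]$, which is precisely what allows one to absorb the $\nu',\nu''$ (and $\gamma',\gamma''$) contributions into $C/x$ and $C/x^2$ with the correct powers of $x$, via $1/\eta\le 1/x$ on the outer shell and $1/\delta\le 2/x$ on the inner shell.
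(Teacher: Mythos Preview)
Your proof is correct and follows essentially the same approach as the paper: both reduce to the explicit formula $xg=\varepsilon\log(x/\delta)-1$, compute $x\tilde g'+\tilde g$ on each shell in terms of $\gamma',\gamma'',\nu',\nu''$ and $(xg)'=\varepsilon/x$, and then convert the bump-function bounds $1/\delta,1/\eta$ into the required powers of $1/x$. Your use of the identity $xu'+u=(xu)'$ and the uniform bound $|xg|\le 1$ streamlines the bookkeeping a bit, but the argument is otherwise the paper's.
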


\begin{proof} The second statement follows from \ref{eq:diff_eq_estimates} so we carry out the estimates in the other two cases.
We will use the explicit formula
\begin{equation}\label{eq:form_xg}
xg=\epsilon\log(\frac{x}{\delta})-1.
\end{equation}

First assume $x \le 2\delta$. Then we have

\begin{align*}
&  |x\tilde{g}'(x) + \tilde{g}(x)| = |x(g + \gamma( \frac{1}{x} - g))' + g + \gamma( \frac{1}{x} -g) |      \\
& = | xg' + \gamma' - \frac{\gamma}{x} - x \gamma' g - x \gamma g' + g + \frac{\gamma}{x} - \gamma g| \\
& = | \gamma'(1 - xg) - (1 - \gamma)(xg' + g)| =   | \gamma'(1 - xg) - (1 - \gamma)\frac{\epsilon}{x} | \\
&< \frac{2}{\delta} |2-\epsilon\log(\frac{x}{\delta})|+ \frac{\epsilon}{x} < \frac{8+\epsilon}{x}+2\frac{\epsilon}{\delta}\log(\frac{x}{\delta}) < \frac{8+\epsilon}{x}+ 4\frac{\epsilon\log(2)}{x},
\end{align*}
where the first inequality follows from the triangle inequality and \eqref{eq:form_xg} and \eqref{eq:diff_eq_estimates}. 

For the derivative,

\begin{align*}
&  (|x\tilde{g}'(x) + \tilde{g}(x))'| =   | (\gamma'(1 - xg) - (1 - \gamma)\frac{\epsilon}{x})' | \\
& = | \gamma''(1 - xg) - 2\gamma'\frac{\epsilon}{x} + \gamma \frac{\epsilon}{x^2}| \\
& < \frac{4}{\delta^2} |2-\epsilon\log(\frac{x}{\delta})|+ \frac{4\epsilon}{x\delta}+\frac{\epsilon}{x^2} < \frac{32+\epsilon\log(2)+9\epsilon}{x^2}.
\end{align*}

In the second case we assume  $\frac{\eta}{2}\le x \le \eta$. Then

\begin{align*}
&  |x\tilde{g}'(x) + \tilde{g}(x)| = |x(\nu g)' + \nu g |      \\
& = | x \nu g'  + x \nu' g + \nu g  | = |\nu \frac{\epsilon}{x} + x \nu' g| \\
& < \frac{\epsilon}{x} + \frac{4}{\eta}|\epsilon\log(\frac{x}{\delta})-1| < \frac{\epsilon}{x} + \frac{4}{x}|\epsilon\log(\frac{x}{\delta})-1|\\
&< \frac{\epsilon(1+\log(2))}{x}. 
\end{align*}

And for the derivative,

\begin{align*}
&  (|x\tilde{g}'(x) + \tilde{g}(x))'| =   |(   \nu \frac{\epsilon}{x} + x \nu' g   )' |      \\
& = |  \nu' \frac{\epsilon}{x} - \nu \frac{\epsilon}{x^2} + \nu' g + x \nu'' g + x \nu' g'| \\
&=| - \nu \frac{\epsilon}{x^2} +  \nu' \frac{2\epsilon}{x}  + x \nu'' g| \\
& <  \frac{\epsilon}{x^2} + \frac{8 \epsilon}{x \eta}  +  \frac{16\epsilon\log(2)}{\eta^2} <  \frac{9\epsilon+ 16\epsilon\log(2)}{x^2}.
\end{align*}

\end{proof}

\begin{proof}[Proof of Theorem \ref{almost-kaehler-4-dim}]
Let us set
\begin{equation}
f(x):=\log(\delta)+\int_{\delta}^x\tilde{g}(y)dy,
\end{equation}
which defines a smooth function $f\colon (0,+\infty)\rightarrow \R$ such that $f(x)=\log(x)$, when $x\leq \delta$ and $f'(x)=0$, for $\eta\leq x$. Define $\Omega:=\pi^*\omega+id'd'' F$, where $F=(p_1^{\mathcal{B}})^*\mu^* f$ as above. Note that, whereas $F$ is defined on $\mathcal{B}\setminus E$, by Remark \ref{rem:fubini_study}, the form $id'd'' F$ extends to $\mathcal{B}$ as a closed $2$-form, which is non degenerate on $E$. Since $\pi^*\omega$ is positive on any $1$-dimensional subspace $\Span_{\C}\langle v \rangle$ with $v\notin TE$, then $\Omega$ is positive on $\tilde{M}$.

Let $\Omega^-$ denote the $\tilde{J}$-anti-invariant component of $\Omega$, i.e.,  the ortogonal projection of $\Omega$ onto $\bigwedge_{\tilde{J}}^{-}$. Let $\tilde{h}$ be a $\tilde{J}$-Hermitian metric on $\tilde{M}$ and let $s:=b^+=\dim H_{dR}^{2}(\tilde{M};\R)$. All norms will be defined with respect to this metric on $\tilde{M}$. We estimate the norms in the chart $(\psi_1,\C^2)$ when $|w|<2$, but it is easy to see that the same bounds apply for the chart $(\psi_2,\C^2)$ when $|w|<2$ and $\mathcal{B}$ is covered by these two charts. 

Using the explicit expression of $\Omega^-$ from Lemma \ref{lemma-2}, since $$(1+|w|^2)(f''|z|^2(1+|w|^2)+f')=(1+|w|^2)(x\tilde{g}'(x)+\tilde{g}(x)),$$ at $x=|z|^2(1+|w|^2)$, the estimates from Lemma \ref{lemma-4} and Proposition \ref{lemma-3}, implies that
\begin{equation}\label{eq:omega_minus_bound}
|\Omega^-| < \frac{C}{|z|^2}|z|^3=C|z|
\end{equation}
and
\begin{align}\label{eq:nabla_omega_minus_bound}
|\nabla\Omega^-| &< |(x\tilde{g}(x)+\tilde{g}(x))'|\cdot|z|\cdot|B^{(2)}|+|(x\tilde{g}(x)+\tilde{g}(x)|\cdot |\nabla B^{(2)}|\\
& < \frac{C}{|z|^4}|z|\cdot|z|^3+ \frac{C}{|z|^2}|z|^2=C.\nonumber
\end{align}

Let us then suppose that $\{\omega_1,\dots,\omega_s\}$ is a basis as in Lemma \ref{lem:ort_cond_omega_s}. Define the quantities
\begin{gather}
\chi_{k}:=\langle\Omega^-,\omega_k\rangle=\int_{\tilde{M}}\Omega\wedge \omega_{k},\\
z_{k,J}:=\langle\omega_{k}^-,\omega_{k}\rangle=\int_{\tilde{M}}\omega_{k}^-\wedge\omega_k
\end{gather}
for $k\in\{1,\dots,s\}$. Applying estimates \eqref{eq:omega_minus_bound}, we have that
\begin{equation}\label{eq:chi_estimates}
|\chi_{k}|\leq C\cdot\varepsilon, \qquad  k\in\{1,\dots,s\},
\end{equation}
since if $\Omega^-\neq 0$, then $|z|< \epsilon$.

Let us then consider the form $$\Omega^--\sum_{k=1,z_{k,J}\neq 0}^s\chi_{k}\,z_{k,J}^{-1}\,\omega_k^-.$$ We claim that this form belongs to $(\mathcal{H}_{\tilde{h}}^+)^{\perp}$.
We begin by observing that if $z_{k,J}=0$ for every $k\in\{1,\dots,s\}$, then $\omega_{k}^-=0$, i.e., each $\omega_k=h_k\tilde{\omega}$ is of type $(1,1)$. Then
$$
\langle\Omega^-,\omega_{m}\rangle=0, \qquad m\in\{1,\dots, s\}
$$
since $\Omega^-$ is of type $(2,0)+(0,2)$.

Let us then suppose that $z_{k,J}\neq 0$ for some indices  and arrange them so that $\{1,\dots,r\}$ are the indices such that $z_{k,J}\neq 0$, for $k\in\{1,\dots, r\}$. We then compute, for $m\in \{1,\dots, s\}$,
\begin{align*}
\left\langle \Omega^--\sum_{k=1}^r\chi_{k}z_{k,J}^{-1}\omega_{k}^-,\omega_{m} \right\rangle&=\langle\Omega^-,\omega_{m}\rangle-\sum_{k=1}^r\chi_{k}z_{k,J}^{-1}\langle\omega_{k}^-,\omega_m\rangle\\
&=\chi_{m}-\chi_{m}z_{m,J}^{-1}\langle\omega_{m}^-,\omega_m\rangle\\
&=0,
\end{align*}
by Lemma \ref{lem:ort_cond_omega_s}. Therefore, $\Omega^--\sum_{k=1}^r\chi_{k}z_{k,J}^{-1}\omega_{k}^-\in(\mathcal{H}_{\tilde{h}}^+)^{\perp}$, as claimed.

Next, by \cite[Lemma 2.4]{DLZ}, see Lemma \ref{lemma-DLZ}, there exists $a\in\mathcal{A}^1(\tilde{M})$ such that
\[
\Omega^--\sum_{k=1}^r\chi_{k}z_{k,J}^{-1}\omega_{k}^-=(da)_{\tilde{h}}^+.
\]
By \eqref{eq:omega_minus_bound}, \eqref{eq:nabla_omega_minus_bound}, and \eqref{eq:chi_estimates}, then \cite[Lemma 1.1]{T} implies that we can choose $a$ such that $|\nabla a|< C \epsilon $.
Set $$\Psi:=\Omega-\sum_{k=1}^r\chi_{k}z_{k,J}^{-1}\omega_{k}+da.$$
Then, by construction, $\Psi$ is closed, the $\tilde{J}$-anti-invariant component of $\Psi$ is zero, so that $\Psi$ provides a closed $(1,1)$-form on $\tilde{M}$. Since $|\nabla a|< C \epsilon$, $\Psi$ is positive for small enough $\epsilon$. Hence, $(\tilde{J},\Psi)$ is an almost K\"ahler structure on $\tilde{M}_x$ and $(\tilde{M}_x,\tilde{J},\Psi)$ is an almost K\"ahler manifold.
\end{proof}
\section{Almost complex blow-ups  of almost complex manifolds at a point}\label{sec-blowup-2n}
In Section \ref{sec-blowup-4}, we defined the blow-up at a point of a $4$-dimensional almost complex manifold. 
In this Section we will extend such a construction for an arbitrary $2n$-dimensional almost complex manifold $(M,J)$. The computations are notationally heavier. We start by giving the following two definitions analogous to Definitions \ref{line-condition} and 
\ref{strong-line-condition}.
\begin{definition}\label{line-condition-2n} Let $(M,J)$ be $2n$-dimensional almost complex manifold. The almost complex structure $J$ satisfies the {\em weak line condition at} $x\in M$ if there exists a smooth local chart $(\mathcal{U},\phi)$, $0\in\mathcal{U}\subset\C^n$, $\phi:\mathcal{U}\to \phi(\mathcal{U})\subset M$, $\phi(0)=x$, such that for all $(q_1,\ldots, q_n)\in \C^n$ the function 
$f:(\C,i)\to (\C^n,\mathbb{J})$ defined by
$$
f(z)=(q_1z,\ldots,q_nz)
$$
is pseudoholomorphic where defined, with $\mathbb{J}=d\phi^{-1}\circ J\circ d\phi$.
\end{definition}
\begin{definition}\label{strong-line-condition-2n}
Let $(M,J)$ be $2n$-dimensional almost complex manifold. The almost complex structure $J$ satisfies the {\em line condition at} $x\in M$ if there exists a smooth local chart $(\mathcal{U},\phi)$, $0\in\mathcal{U}\subset\C^n$, $\phi:\mathcal{U}\to \phi(\mathcal{U})\subset M$, $\phi(0)=x$, such that for all $(p,q)\in \C^n$ the almost complex structure coincides with $i$ on the complex subspace $Span_\C\langle p,q\rangle$ of $T_{(p,q)}\C^n$ spanned by $(p,q)$ and also coincides with $i$ on the quotient $T_{(p,q)}\C^n\slash Span_\C\langle p,q\rangle$.
\end{definition}
We give the following
\begin{definition}
Let $(M,J)$ be an almost complex manifold and $x\in M$. Let  $(\mathcal{U},\phi)$ be a smooth local chart at $x$, where $0\in\mathcal{U}\subset\C^n$, $\phi:\mathcal{U}\to \phi(\mathcal{U})\subset M$ and $\phi(0)=x$. 
Let 
$$
\Big\{\Big( (z_1,\ldots ,z_n),[v_1:\ldots ,v_n]\Big)\in\mathcal{U}\times\mathbb{P}^{n-1}(\C)\,\,\,\vert\,\,\, z_iv_j-z_jv_i=0\,\,\quad i,j=1,\ldots ,n\Big\}\subset\mathcal{U}\times\mathbb{P}^{n-1}(\C)
$$
Then we can define the pair $(\tilde{M}_x,\pi)$, where $\tilde{M}_x$ is the smooth manifold 
$$
\tilde{M}_x=\Big(M\setminus\{x\}\cup \mathcal{B}\Big)\slash\sim
$$
for $\phi(z_1,\ldots z_n)\sim ((z_1,\ldots ,z_n),[z_1:\ldots :z_n])\in\mathcal{B},  (z_1,\ldots,z_n)\neq(0,\ldots,0)$ and $\pi:\tilde{M}_x\to M$ is defined as 
$$
\pi((z_1,\ldots,z_n),[v_1:\ldots :v_n])=\phi(z_1,\ldots ,z_n),
$$ 
for $((z_1,\ldots ,z_n),[v_1:\ldots :v_n])\in\mathcal{B}$ and 
$\pi(y)=y$, for $y\in M\setminus\{x\}$.
An {\em almost complex blow-up of $M$ at $x$} with respect to $(\mathcal{U},\phi)$ is an almost complex structure $\tilde{J}$ on $\tilde{M}_x$ such that $\pi$ is a $(\tilde{J},J)$-pseudoholomorphic map.
\end{definition}
Note that $\pi:\tilde{M}\setminus \pi^{-1}(x)\to M\setminus \{x\}$ is  a pseudobiholomorphism.\newline 
The analogue of Lemma \ref{topological-extension} also holds in higher dimension.\newline
Let $(M,J)$ be an almost complex manifold and $x\in M$. Assume that $J$ satisfies the weak line condition given in Definition \ref{line-condition-2n}. We will give sufficient conditions on the almost complex structure $J$ which guarantee the existence of the almost complex blow-up of $M$ at $p$. Let $(\mathcal{U},\phi)$ be a (smooth) local chart at $x$ as in Definition \ref{line-condition-2n}. Set 
$$
\mathbb{J}:=d\phi^{-1}\circ J\circ d\phi.
$$
Then $\mathbb{J}$ is an almost complex structure on $\mathcal{U}$. Denote by $p_1:\mathcal{U}\times \mathbb{P}^{n-1}(\C) \to \mathcal{U}$ the natural projection and let 
$$
\mathcal{B}:=\Big\{\Big( (z_1,\ldots ,z_n),[v_1:\ldots ,v_n]\Big)\in\mathcal{U}\times\mathbb{P}^{n-1}(\C)\,\,\,\vert\,\,\, z_iv_j-z_jv_i=0\,\,\quad i,j=1,\ldots ,n\Big\}.
$$
Let $p_1^{\mathcal{B}}$ be the restriction of $p_1$ to $\mathcal{B}$; then $(p_1^{\mathcal{B}})^{-1}(0)=\mathbb{P}^{n-1}(\C)$. Note that $p_1^{\mathcal{B}}:\mathcal{B}\setminus (p_1^{\mathcal{B}})^{-1}(0)\to \mathcal{U}\setminus \{0\}$ is a diffeomorphism and defining 
$$
\tilde{\mathbb{J}}=d(p_1^{\mathcal{B}})^{-1}\circ \mathbb{J}\circ dp_1^{\mathcal{B}}
$$
away from $(p_1^{\mathcal{B}})^{-1}(0)$, it turns out that $p_1^{\mathcal{B}}:(\mathcal{B}\setminus (p_1^{\mathcal{B}})^{-1}(0),\tilde{\mathbb{J}})\to (\mathcal{U}\setminus \{0\},\mathbb{J})$ is a pseudobiholomorphism. The almost complex structure $\mathbb{J}$ can be represented in the local chart $\mathcal{U}$ by a matrix $ n\times n$ $A=(A_{ij})$, where each $A_{ij}$ is a pair $(A_{ij}^{(1)},A_{ij}^{(2)})$ of smooth complex valued functions on $\mathcal{U}$. Under this identification, the action of $\mathbb{J}$ at the point $z=(z_1,\ldots,z_n)$ on the tangent vector $u=^t\!\!(u_1,\ldots,u_n)$ is given by
\begin{equation}\label{multiplication-2n}
\mathbb{J}(z)[u]=
\begin{bmatrix}
\sum_{j=1}^nA_{1j}(z)\cdot u_j\\
\vdots\\
\sum_{j=1}^nA_{nj}(z)\cdot u_j
\end{bmatrix}
:=
\begin{bmatrix}
\sum_{j=1}^nA^{(1)}_{1j}(z)u_j+\sum_{j=1}^nA^{(2)}_{1j}(z)\bar{u}_j\\
\vdots\\
\sum_{j=1}^nA^{(1)}_{nj}(z)u_j+\sum_{j=1}^nA^{(2)}_{nj}(z)\bar{u}_j\\
\end{bmatrix}.
\end{equation}
Let $\Big((z_1,\ldots,z_n),[v_1:\ldots :v_n]\Big)$ be a point in $\mathcal{B}$. For $v_j\neq 0$, setting $w_i=\frac{v_i}{v_j}$, local coordinates on $\mathcal{B}$ are provided by 
$$
(w_1,\ldots, w_{j-1},z_j,w_{j+1},\ldots, w_n)
$$
and the local expression of $p_1^{\mathcal{B}}$ is given by
$$
p_1^{\mathcal{B}}((w_1,\ldots, w_{j-1},z_j,w_{j+1},\ldots, w_n))=(z_jw_1,\ldots, z_jw_{j-1},z_j,z_jw_{j+1},\ldots, z_jw_n).
$$ 
By computing the differential of $p_1^{\mathcal{B}}$ at $(w_1,\ldots, w_{j-1},z_j,w_{j+1},\ldots, w_n)$, we obtain
\begin{equation}\label{differenziale-p-2n}
dp_1^{\mathcal{B}}=
\begin{bmatrix}
z_j & 0 & \cdots & \cdots & 0 & w_1 & 0 &\cdots & \cdots &  0\\
0 & z_j & 0 & \cdots & 0 & w_2 & \vdots & \cdots & \cdots &  \vdots\\
\vdots & \ddots & \ddots &\ddots &  \vdots & \vdots & \vdots & \cdots & \cdots &  \vdots\\
\vdots & \cdots & \ddots & z_j & 0 & \vdots & \vdots & \cdots  & \cdots & \vdots\\
\vdots & \cdots & \cdots & \ddots & z_j & w_{j-1} & \vdots & \cdots & \cdots  &  \vdots\vspace{0.1cm}\\
\vdots & \cdots & \cdots & \cdots & 0 & 1 & 0 & \cdots & \cdots &  \vdots \\
\vdots & \cdots & \cdots & \cdots  & \vdots & w_{j+1} & z_j & 0 & \cdots &  \vdots\\
\vdots  & \cdots & \cdots & \cdots & \vdots & \vdots & 0 & \ddots & \ddots & \vdots \\
\vdots & \cdots & \cdots & \cdots & \vdots & \vdots & \vdots & \ddots & \ddots & 0 \\
0 & \cdots & \cdots & \cdots & 0 &  w_{n} & 0 & \cdots &  0 & z_j
 \end{bmatrix}
\end{equation}
The differential of $(p_1^{\mathcal{B}})^{-1}$ at $(z_jw_1,\ldots, z_jw_{j-1},z_j,z_jw_{j+1},\ldots, z_jw_n)$ can be directly computed by the above expression obtaining the following formula
\begin{equation}\label{differenziale-p-inversa-2n}
d(p_1^{\mathcal{B}})^{-1}=\frac{1}{z_j}
\begin{bmatrix}
1 & 0 & \cdots & \cdots & 0 & -w_1 & 0 &\cdots & \cdots &  0\\
0 & 1 & 0 & \cdots & 0 & -w_2 & \vdots & \cdots & \cdots &  \vdots\\
\vdots & \ddots & \ddots &\ddots &  \vdots & \vdots & \vdots & \cdots & \cdots &  \vdots\\
\vdots & \cdots & \ddots & 1 & 0 & \vdots & \vdots & \cdots  & \cdots & \vdots\\
\vdots & \cdots & \cdots & \ddots & 1 & -w_{j-1} & \vdots & \cdots & \cdots  &  \vdots\vspace{0.1cm}\\
\vdots & \cdots & \cdots & \cdots & 0 & z_j & 0 & \cdots & \cdots &  \vdots \\
\vdots & \cdots & \cdots & \cdots  & \vdots & -w_{j+1} & 1 & 0 & \cdots &  \vdots\\
\vdots  & \cdots & \cdots & \cdots & \vdots & \vdots & 0 & \ddots & \ddots & \vdots \\
\vdots & \cdots & \cdots & \cdots & \vdots & \vdots & \vdots & \ddots & \ddots & 0 \\
0 & \cdots & \cdots & \cdots & 0 &  -w_{n} & 0 & \cdots &  0 & 1
 \end{bmatrix}
\end{equation}
Then, for any given tangent vector $u=^t\!\![b_1,\ldots, b_{j-1},a,b_{j+1},\ldots, b_n]$ formula \eqref{differenziale-p-2n} yields to
\begin{equation}\label{differenziale-tangente-2n}
dp_1^{\mathcal{B}}
\begin{pmatrix}
w_1\\
\vdots\\
w_{j-1}\\
z_j\\
w_{j+1}\\
\vdots\\
w_n
\end{pmatrix}
\begin{bmatrix}
b_1\\
\vdots\\
b_{j-1}\\
a\\
b_{j+1}\\
\vdots\\
b_n
\end{bmatrix}
=
\begin{bmatrix}
z_jb_1+w_1a\\
\vdots\\
z_jb_{j-1}+w_{j-1}a\\
a\\
z_jb_{j+1}+w_{j+1}a\\
\vdots\\
z_jb_n+w_na
\end{bmatrix}
\end{equation}
Set
\begin{equation}\label{vector-c-d-2n}
\begin{bmatrix}
d_1\\
\vdots\\
d_{j-1}\\
c\\
d_{j+1}\\
\vdots\\
d_n
\end{bmatrix}:= \mathbb{J}\begin{pmatrix}
z_jw_1\\
\vdots\\
z_jw_{j-1}\\
z_j\\
z_jw_{j+1}\\
\vdots\\
z_jw_n
\end{pmatrix}
\begin{bmatrix}
z_jb_1+w_1a\\
\vdots\\
z_jb_{j-1}+w_{j-1}a\\
a\\
z_jb_{j+1}+w_{j+1}a\\
\vdots\\
z_jb_n+w_na
\end{bmatrix}
\end{equation}
Taking into account \eqref{differenziale-tangente-2n} and \eqref{vector-c-d-2n} we easily compute
\begin{equation}\label{almost-complex-structure-blowup-2n}
\tilde{\mathbb{J}}\begin{pmatrix}
w_1\\
\vdots\\
w_{j-1}\\
z_j\\
w_{j+1}\\
\vdots\\
w_n
\end{pmatrix}
\begin{bmatrix}
b_1\\
\vdots\\
b_{j-1}\\
a\\
b_{j+1}\\
\vdots\\
b_n
\end{bmatrix}=
\begin{bmatrix}
\frac{d_1-w_1c}{z_j}\\
\vdots\\
\frac{d_{j-1}-w_{j-1}c}{z_j}\\
c\\
\frac{d_{j+1}-w_{j+1}c}{z_j}\\
\vdots\\
\frac{d_{n}-w_{n}c}{z_j}
\end{bmatrix}
\end{equation}
\begin{itemize}
\item[i)] Case $a\neq 0,b_k=0,k\in\{1,\ldots ,\hat{j},\ldots ,n\}$.\newline
From \eqref{vector-c-d-2n}, we have:
$$
\begin{bmatrix}
d_1\\
\vdots\\
d_{j-1}\\
c\\
d_{j+1}\\
\vdots\\
d_n
\end{bmatrix}:= \mathbb{J}\begin{pmatrix}
z_jw_1\\
\vdots\\
z_jw_{j-1}\\
z_j\\
z_jw_{j+1}\\
\vdots\\
z_jw_n
\end{pmatrix}
\begin{bmatrix}
w_1a\\
\vdots\\
w_{j-1}a\\
a\\
w_{j+1}a\\
\vdots\\
w_na
\end{bmatrix}
=
\begin{bmatrix}
iw_1a\\
\vdots\\
iw_{j-1}a\\
ia\\
iw_{j+1}a\\
\vdots\\
iw_na
\end{bmatrix}
$$
since $J$ satisfies the line condition.
\item[ii)] Case $a=0,\,b_k\neq 0$ for a given $k\in \{1,\ldots ,\hat{j},\ldots ,n\}$ and $ b_h=0$, for all $h\in\{1,\ldots ,\hat{j},\ldots ,n\}, h\neq k$. From \eqref{vector-c-d-2n}, we have:
$$
\begin{bmatrix}
d_1\\
\vdots\\
d_{j-1}\\
c\\
d_{j+1}\\
\vdots\\
d_n
\end{bmatrix}:= 
\mathbb{J}\begin{pmatrix}
z_jw_1\\
\vdots\\
z_jw_{j-1}\\
z_j\\
z_jw_{j+1}\\
\vdots\\
z_jw_n
\end{pmatrix}
\begin{bmatrix}
0\\
\vdots\\
z_jb_k\\
\vdots\\
0\\
\vdots\\
0
\end{bmatrix}
=
\begin{bmatrix}
A_{1k}\cdot z_jb_k\\
\vdots\\
\vdots\\
\vdots\\
\vdots\\
\vdots\\
A_{nk}\cdot z_jb_k
\end{bmatrix}
$$
Therefore, we obtain the following equations:
$$
\left\{
\begin{array}{ll}
d_1&= A_{1k}\cdot z_j b_k\\
\vdots\\
d_{j-1}&= A_{j-1\,k}\cdot z_j b_k\\
c&=A_{jk}\cdot z_jb_k\\
d_{j+1}&= A_{j+1\,k}\cdot z_j b_k\\
\vdots\\
d_{n}&= A_{n\,k}\cdot z_j b_k
\end{array}
\right.
$$
Consequently, from formula \eqref{almost-complex-structure-blowup-2n}, the explicit expression of the almost complex structure $\tilde{\mathbb{J}}$ is given by
\begin{equation}\label{almost-complex-structure-blowup-explicit-2n}
\tilde{\mathbb{J}}\begin{pmatrix}
w_1\\
\vdots\\
w_{j-1}\\
z_j\\
w_{j+1}\\
\vdots\\
w_n
\end{pmatrix}
\begin{bmatrix}
0\\
\vdots\\
z_jb_k\\
0\\
0\\
\vdots\\
0
\end{bmatrix}=
\begin{bmatrix}
\frac{(A_{1k}-w_1A_{jk})\cdot z_jb_k}{z_j}\\
\vdots\\
\frac{(A_{j-1\,k}-w_{j-1}A_{jk})\cdot z_jb_k}{z_j}\\
A_{jk}\cdot z_jb_k\\
\frac{(A_{j+1\,k}-w_{j+1}A_{jk})\cdot z_jb_k}{z_j}\\
\vdots\\
\frac{(A_{nk}-w_{n}A_{jk})\cdot z_jb_k}{z_j}
\end{bmatrix}
\end{equation}
Therefore, on the local chart $v_j\neq 0$, taking into account the above expressions, a sufficient condition in order that the almost complex strcuture $\tilde{\mathbb{J}}$ can be extended on the whole blow-up is given by 
\begin{equation}\label{extension-condition-2n}
A_{ij}^{(2)}-w_jA_{jk}^{(2)}=F_i^{k,j}(z,w)z_j
\end{equation}
where $j\in\{1,\ldots,n\},\,\,i,k\in\{1,\ldots,\hat{j}\ldots,n\},\,\,$, the functions $A_{ik}^{(2)}$ and $A_{jk}^{(2)}$ are evaluated at the point $(w_1z_j,\ldots , w_{j-1}z_j,z_j,w_{j+1}z_j,\ldots , w_nz_j)$ and $(z,w)=(w_1,\ldots , w_{j-1},z_j,w_{j+1},\ldots , w_n)$ and the functions $F_i^{k,j}(z,w)$ are smooth.
\end{itemize}
Summing, we have proved the following
\begin{theorem}\label{theorem-blowup-2n}
Let $(M,J)$ be an almost complex manifold. If $J$ satisfies the weak line condition at $x\in M$ and conditions \eqref{extension-condition-2n} hold, then there exists an almost complex blow-up $(\tilde{M}_x,\tilde{J})$ of $(M,J)$ at $x$.
\end{theorem}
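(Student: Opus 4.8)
The plan is to reduce the existence of the blow-up to the smooth extendability of the tensor $\tilde{\mathbb{J}}$ across the exceptional divisor $(p_1^{\mathcal{B}})^{-1}(0)$, exactly as in the $4$-dimensional Theorem \ref{theorem-blowup}, and then to verify that the extension conditions \eqref{extension-condition-2n} are precisely what is needed for this extension in each of the standard coordinate charts $v_j\neq 0$ on $\mathcal{B}$. By Lemma \ref{topological-extension} (which, as noted in the text, holds verbatim in dimension $2n$), it suffices to produce an almost complex blow-up of $M$ at $x$ with respect to the chart $(\mathcal{U},\phi)$ supplied by the weak line condition; that is, it suffices to show that $\tilde{\mathbb{J}}$, defined away from $(p_1^{\mathcal{B}})^{-1}(0)$ by $\tilde{\mathbb{J}}=d(p_1^{\mathcal{B}})^{-1}\circ\mathbb{J}\circ dp_1^{\mathcal{B}}$, extends to a smooth $(1,1)$-tensor on all of $\mathcal{B}$ with $\tilde{\mathbb{J}}^2=-\id$, after which $\pi:=\phi\circ p_1^{\mathcal{B}}$ is automatically $(\tilde{J},J)$-pseudoholomorphic by construction.

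First I would fix a chart $v_j\neq 0$, with coordinates $(w_1,\dots,w_{j-1},z_j,w_{j+1},\dots,w_n)$ and $p_1^{\mathcal{B}}$ as written, and use the formula \eqref{almost-complex-structure-blowup-2n} expressing $\tilde{\mathbb{J}}$ in terms of $\mathbb{J}$ at $(z_jw_1,\dots,z_j,\dots,z_jw_n)$. Because $\tilde{\mathbb{J}}$ is $\mathbb{R}$-linear on each tangent space, it is enough to check smoothness of $\tilde{\mathbb{J}}$ applied to each basis vector; I would split this into the two cases already isolated in the excerpt. In case (i), a vector of the form $a\,(w_1,\dots,w_{j-1},1,w_{j+1},\dots,w_n)$ is tangent to the line $\{w_i=\text{const}\}$, which $dp_1^{\mathcal{B}}$ sends to the complex line through the origin spanned by $(w_1,\dots,1,\dots,w_n)$; the weak line condition gives $\mathbb{J}$ acting as $i$ there, so the $1/z_j$ factors in \eqref{almost-complex-structure-blowup-2n} cancel and $\tilde{\mathbb{J}}$ extends trivially and equals $i$ on these directions. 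In case (ii), applying $\tilde{\mathbb{J}}$ to $(0,\dots,z_jb_k,\dots,0)$ produces, via \eqref{almost-complex-structure-blowup-explicit-2n}, entries of the shape $\big(A_{ik}^{(2)}-w_iA_{jk}^{(2)}\big)\bar b_k + (\text{smooth in }A^{(1)})$, where the arguments are evaluated at $(w_1z_j,\dots,z_j,\dots,w_nz_j)$; the only potentially singular term is the one divided by $z_j$, and it is smooth precisely when $A_{ik}^{(2)}-w_iA_{jk}^{(2)}$ is divisible by $z_j$ with smooth quotient — which is exactly hypothesis \eqref{extension-condition-2n}. (Here one must be mildly careful with the anti-linear part: since $\mathbb{J}(z)[u]$ has the form $A^{(1)}u+A^{(2)}\bar u$, the conjugation turns $z_j$ into $\bar z_j$, but divisibility by $z_j$ is equivalent to divisibility by $\bar z_j$ after conjugating the condition, so \eqref{extension-condition-2n} is the right statement; I would spell this out.) Combining the two cases, $\tilde{\mathbb{J}}$ extends smoothly over $\{z_j=0\}$ in the chart $v_j\neq 0$.

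Next I would observe that the charts $\{v_j\neq 0\}$, $j=1,\dots,n$, cover $\mathcal{B}$, and that on overlaps the transition maps are holomorphic (the standard projective transition functions $w_i\mapsto w_i/w_\ell$ composed with the coordinate on the base); hence the locally-defined smooth extensions agree with $\tilde{\mathbb{J}}$ on $\mathcal{B}\setminus(p_1^{\mathcal{B}})^{-1}(0)$, a dense open set, and therefore patch to a single globally-defined smooth $(1,1)$-tensor on $\mathcal{B}$, which I still call $\tilde{\mathbb{J}}$. Away from the divisor $\tilde{\mathbb{J}}^2=-\id$ because it is conjugate to $\mathbb{J}$ by the diffeomorphism $p_1^{\mathcal{B}}$; by continuity of $\tilde{\mathbb{J}}^2$ and density, $\tilde{\mathbb{J}}^2=-\id$ on all of $\mathcal{B}$, so $\tilde{\mathbb{J}}$ is a genuine almost complex structure near the divisor. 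Finally I would glue: define $\tilde{J}$ on $\tilde{M}_x=(M\setminus\{x\}\cup\mathcal{B})/\!\sim$ to equal $J$ on $M\setminus\{x\}$ and $\tilde{\mathbb{J}}$ (transported by $\phi$, i.e. $\psi_j$-pushed) on $\mathcal{B}$; these are compatible on the overlap because $p_1^{\mathcal{B}}$ intertwines them there, so $\tilde{J}$ is a well-defined smooth almost complex structure on $\tilde{M}_x$ and, by construction, $\pi$ is $(\tilde{J},J)$-pseudoholomorphic. This is the desired almost complex blow-up.

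The routine but genuinely fiddly part is the bookkeeping in case (ii): tracking which entries of \eqref{almost-complex-structure-blowup-explicit-2n} carry a $1/z_j$, confirming that the $A^{(1)}$-contributions are automatically smooth (they are, since $A^{(1)}_{ik}(z_jw_\bullet)-w_iA^{(1)}_{jk}(z_jw_\bullet)$ vanishes at $z_j=0$ because $A^{(1)}_{ik}(0)=i\delta_{ik}$, using $\mathbb{J}(0)=i$ after normalizing the chart, and more generally it is divisible by $z_j$), and keeping the indices $i,k\in\{1,\dots,\hat\jmath,\dots,n\}$ straight across all $n$ charts. The only conceptual subtlety — and the one I would be most careful to get right — is the interplay between the antilinear part of $\mathbb{J}$ and divisibility by $z_j$ versus $\bar z_j$; once that is pinned down, the rest is the $2n$-dimensional transcription of the $4$-dimensional argument already given for Theorem \ref{theorem-blowup}.
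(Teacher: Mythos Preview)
Your proposal is correct and follows essentially the same approach as the paper: the paper's proof is precisely the computation preceding the theorem statement (cases (i) and (ii) leading to \eqref{extension-condition-2n}), and you reproduce this while adding the routine patching and gluing details that the paper leaves implicit. One small simplification: the $A^{(1)}$-contribution in case (ii) is automatically smooth because the $z_j$ in the numerator cancels the $z_j$ in the denominator exactly, so no divisibility argument via $\mathbb{J}(0)=i$ is needed there; only the $A^{(2)}$-part carries the factor $\bar z_j/z_j$ that forces condition \eqref{extension-condition-2n}.
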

Similar arguments to the $4$-dimensional case (see Theorem \ref{theorem-blowup-strong}, Theorem \ref{uniqueness-blowup-4}) prove the following
\begin{theorem}\label{theorem-blowup-strong-2n}
Let $(M,J)$ be a $2n$-dimensional almost complex manifold. If $J$ satisfies the line condition at $x\in M$,  then there exists an almost complex blow-up $(\tilde{M}_x,\tilde{J})$ of $(M,J)$ at $x$.
\end{theorem}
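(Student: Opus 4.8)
The plan is to reproduce the proof of Theorem~\ref{theorem-blowup-strong} with $n$ complex variables in place of two. Fix a local chart $(\mathcal{U},\phi)$, $0\in\mathcal{U}\subset\C^n$, at $x$ realizing the line condition of Definition~\ref{strong-line-condition-2n}; put $\mathbb{J}=d\phi^{-1}\circ J\circ d\phi$, with $n\times n$ coefficient matrix $(A_{ij})$ as in \eqref{multiplication-2n}, and form $\tilde{\mathbb{J}}=d(p_1^{\mathcal{B}})^{-1}\circ\mathbb{J}\circ dp_1^{\mathcal{B}}$ on $\mathcal{B}\setminus E$, where $E=(p_1^{\mathcal{B}})^{-1}(0)$. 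Since $p_1^{\mathcal{B}}\colon\mathcal{B}\setminus E\to\mathcal{U}\setminus\{0\}$ is a pseudobiholomorphism, by the higher-dimensional analogue of Lemma~\ref{topological-extension} it suffices to prove that $\tilde{\mathbb{J}}$ extends smoothly across $E$ in each affine chart $v_j\neq 0$; the identities $\tilde{\mathbb{J}}^2=-\id$ and the $(\tilde{J},J)$-pseudoholomorphy of $\pi$, valid on the dense open set $\tilde{M}_x\setminus E$, then pass to $E$ by continuity and yield the blow-up $(\tilde{M}_x,\tilde{J})$. (Equivalently, one may check that the line condition implies the weak line condition of Definition~\ref{line-condition-2n} together with the sufficient conditions \eqref{extension-condition-2n} and invoke Theorem~\ref{theorem-blowup-2n}; the weak line condition is immediate, since for $v\in\C^n$ the curve $\zeta\mapsto v\zeta$ has velocity spanning $\Span_{\C}\langle v\rangle$ at every one of its points.)

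First I would fix $j$ and work in the chart $v_j\neq 0$, with coordinates $(w_1,\dots,w_{j-1},z_j,w_{j+1},\dots,w_n)$, in which $p_1^{\mathcal{B}}$ is the holomorphic map $(w,z_j)\mapsto(z_jw_1,\dots,z_j,\dots,z_jw_n)$, so that $dp_1^{\mathcal{B}}$ and $d(p_1^{\mathcal{B}})^{-1}$ from \eqref{differenziale-p-2n} and \eqref{differenziale-p-inversa-2n} are $\C$-linear at every point. As in dimension four, $dp_1^{\mathcal{B}}$ sends the coordinate vector $\partial_{z_j}$ to $(w_1,\dots,w_{j-1},1,w_{j+1},\dots,w_n)$, which spans the complex line $L$ through the origin and through $p_1^{\mathcal{B}}(w,z_j)$, and it induces a $\C$-linear isomorphism $T_{(w,z_j)}\mathcal{B}/\langle\partial_{z_j}\rangle\cong T_{p_1^{\mathcal{B}}(w,z_j)}\C^n/L$. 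Evaluating the line condition at $p_1^{\mathcal{B}}(w,z_j)$: since $\mathbb{J}$ agrees with $i$ on $L$, $\C$-linearity forces $\tilde{\mathbb{J}}(\partial_{z_j})=i\,\partial_{z_j}$, which extends trivially across $\{z_j=0\}$; and since $\mathbb{J}$ agrees with $i$ on the quotient by $L$, it forces
\[
\tilde{\mathbb{J}}(\partial_{w_k})=i\,\partial_{w_k}+c_k(w,z_j)\,\partial_{z_j},\qquad k\neq j,
\]
for some functions $c_k$. Comparing this with the explicit formula \eqref{almost-complex-structure-blowup-explicit-2n} (specialized to $b_k=1$): its $\partial_{z_j}$-component is $c_k=A_{jk}\cdot z_j$, manifestly smooth up to $z_j=0$, whereas its $\partial_{w_\ell}$-components for $\ell\neq j$ --- the only entries carrying the a priori singular factor $z_j^{-1}$ --- are forced by the line condition to equal the smooth functions $i\,\delta_{\ell k}$ on $\{z_j\neq 0\}$; equating the two expressions for these components is precisely the extension condition \eqref{extension-condition-2n}. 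Running the same reasoning with $i\,\partial_{z_j}$ and $i\,\partial_{w_k}$ in place of $\partial_{z_j}$ and $\partial_{w_k}$ shows $\tilde{\mathbb{J}}$ carries these to globally smooth vector fields as well; since $\{\partial_{z_j},\,i\,\partial_{z_j}\}\cup\{\partial_{w_k},\,i\,\partial_{w_k}:k\neq j\}$ is a real frame of $T\mathcal{B}$ in this chart, $\tilde{\mathbb{J}}$ extends smoothly across $\{z_j=0\}$. Letting $j$ range over the $n$ charts $v_j\neq 0$, which cover a neighbourhood of $E$, we conclude that $\tilde{\mathbb{J}}$ extends to all of $\tilde{M}_x$.

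I expect the only genuine work to be bookkeeping: organizing the $n$ affine charts $v_j\neq 0$ and the $n\times n$ matrix $(A_{ij})$, and lining up the consequence of the line condition against the entries of \eqref{almost-complex-structure-blowup-explicit-2n} that superficially blow up. There is no new conceptual ingredient beyond the four-dimensional proof of Theorem~\ref{theorem-blowup-strong}, and compatibility of the extended $\tilde{\mathbb{J}}$ on chart overlaps is automatic, since across $E$ the smooth extension is the unique continuous one.
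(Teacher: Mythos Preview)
Your proposal is correct and follows exactly the approach indicated in the paper, which simply states that ``similar arguments to the $4$-dimensional case (see Theorem~\ref{theorem-blowup-strong})'' establish the result. Your write-up in fact supplies more detail than the paper does: you make explicit that $dp_1^{\mathcal{B}}$ identifies $\partial_{z_j}$ with a generator of the line $L$ and induces a $\C$-linear isomorphism on quotients, and you correctly note the need to evaluate $\tilde{\mathbb{J}}$ on the full real frame $\{\partial_{z_j},\,i\partial_{z_j},\,\partial_{w_k},\,i\partial_{w_k}\}$ since $\tilde{\mathbb{J}}$ is only real-linear.
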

\begin{theorem}\label{uniqueness-blowup-2n}
Almost complex blow-ups at a point are unique.
\end{theorem}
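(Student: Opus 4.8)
The plan is to prove in dimension $2n$ the exact analogue of Theorem~\ref{uniqueness-blowup-4}: if $f\colon (X_1,J_1)\to (X_2,J_2)$ is a pseudobiholomorphism with $f(p_1)=p_2$ and $\tilde X_1,\tilde X_2$ are almost complex blow-ups of $X_1,X_2$ at $p_1,p_2$, then there is a pseudobiholomorphism $\tilde f\colon \tilde X_1\to \tilde X_2$ with $\pi_2\circ\tilde f=f\circ\pi_1$ which, under the identification of the exceptional divisors $E_i$ with the projectivized tangent spaces $\mathbb P T_{p_i}X_i$, is the projectivization $[e]\mapsto [df(p_1)e]$ of the linear isomorphism $df(p_1)\colon T_{p_1}X_1\to T_{p_2}X_2$. (Taking $f=\mathrm{id}$ recovers chart-independence, i.e.\ the naive uniqueness.) As in the four-dimensional case, $\tilde f$ is \emph{defined} by these two prescriptions — $\pi_2^{-1}\circ f\circ\pi_1$ off $E_1$, and $[df(p_1)]$ on $E_1$ — and, since $df(p_1)$ is a linear isomorphism and each $\pi_i$ restricts to a pseudobiholomorphism off $E_i$, this is a well-defined bijection, pseudoholomorphic on $\tilde X_1\setminus E_1$. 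By elliptic regularity for pseudoholomorphic maps it then suffices to prove that $\tilde f$ extends continuously across $E_1$; that is the only genuine step.

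To check continuity I would argue pointwise on $E_1$ in local charts, following the proof of Theorem~\ref{uniqueness-blowup-4} essentially verbatim, now with $n$ coordinates. By the Corollary to Theorem~\ref{uniqueness-blowup-4} (blow-ups are independent of the chart up to pseudobiholomorphism) I may assume the charts at $p_1$ and $p_2$ are chosen so that $\mathbb J_1$ and $\mathbb J_2$ are standard at the origin; writing $f=(f_1,\dots,f_n)$, the differential $df(0)\colon\C^n\to\C^n$ is then $\C$-linear and invertible. Fix a point of $E_1$ given by a line $[a]=[a_1:\dots:a_n]$; choose an index $j$ with $a_j\neq 0$ (and normalize $a_j=1$) and an index $\ell$ with $\big(df(0)a\big)_\ell\neq 0$ — both possible since $a\neq 0$ and $df(0)$ is invertible. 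On $\mathcal B_1$ use the affine chart $v_j\neq 0$, with coordinates $(z,w)\in\C\times\C^{n-1}$ near $(0,(a_i)_{i\neq j})$ corresponding to $p_1^{\mathcal B_1}(z,w)=z\,L(w)$, where $L(w)\in\C^n$ has $1$ in the $j$-th slot and entries $w=(w_i)_{i\neq j}$ elsewhere; on $\mathcal B_2$ use the analogous affine chart $v_\ell\neq 0$. In these coordinates $E_1=\{z=0\}$, and for $z\neq0$
\[
\tilde f(z,w)=\Big(f_\ell\big(z\,L(w)\big),\ \big(f_i(z\,L(w))/f_\ell(z\,L(w))\big)_{i\neq\ell}\Big).
\]

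Expanding in $z$ and using that $df(0)$ is $\C$-linear gives $f_i(z\,L(w))=z\,\big(df(0)L(w)\big)_i+o(z)$ for each $i$, so the $z$-component of $\tilde f(z,w)$ tends to $0$ and its $w$-components tend to $\big(df(0)L(w)\big)_i/\big(df(0)L(w)\big)_\ell$ as $z\to 0$. Since $\big(df(0)L(a)\big)_\ell=\big(df(0)a\big)_\ell\neq 0$, the denominator stays bounded away from $0$ for $w$ near $(a_i)_{i\neq j}$, so these limits depend continuously on $w$, and at $w=(a_i)_{i\neq j}$ they read off precisely the point $[df(0)a]\in\mathbb P^{n-1}(\C)$ in the affine chart $v_\ell\neq 0$ — i.e.\ the value prescribed for $\tilde f$ on $E_1$. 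Hence $\tilde f$ is continuous at this point of $E_1$; the point being arbitrary, and $\tilde f$ being continuous off $E_1$, $\tilde f$ is continuous on $\tilde X_1$. Elliptic regularity upgrades this to smoothness, and the same argument applied to $f^{-1}$ shows $\tilde f$ is a pseudobiholomorphism; it covers $f$ and acts as $[df(p_1)]$ on $E_1$ by construction.

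Beyond the four-dimensional argument no new ideas are needed: the only additional work is combinatorial — selecting the two affine charts on $\mathcal B_1$ and $\mathcal B_2$ adapted to the line $[a]$ and to its image $df(p_1)[a]$, and tracking the $\tfrac00$ indeterminacy in $n$ rather than $2$ variables. I therefore expect the main (and essentially only) obstacle to be bookkeeping: making the pointwise limit computation uniform enough to conclude honest continuity of $\tilde f$ across all of $E_1$, after which elliptic regularity finishes the proof exactly as in Theorem~\ref{uniqueness-blowup-4}.
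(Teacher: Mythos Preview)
Your proposal is correct and follows exactly the approach the paper intends: the paper's own proof of Theorem~\ref{uniqueness-blowup-2n} is the single line ``similar arguments to the $4$-dimensional case (see Theorem~\ref{uniqueness-blowup-4}) prove the following,'' and you have faithfully spelled out those similar arguments, choosing affine charts on $\mathcal B_1$ and $\mathcal B_2$ adapted to $[a]$ and $[df(0)a]$ and repeating the Taylor-expansion/continuity computation in $n$ variables. The one cosmetic point is that invoking the Corollary to Theorem~\ref{uniqueness-blowup-4} to arrange $\mathbb J_i$ standard at the origin is mildly circular; just note instead (as the paper does in dimension $4$) that a real-linear change of chart makes $\mathbb J_i(0)=i$ without affecting the blow-up.
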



\begin{thebibliography}{99}















\bibitem{Bl} A. Blanchard, Sur les 
vari\'et\'es analtyques complexe,
{\em Ann. Scient. de l'E.N.S.} { \bf 73} (1956),  157--202.






\bibitem{DLZ} T. Draghici, T.-J. Li, W. Zhang, Symplectic Forms and Cohomology Decomposition of almost Complex Four-Manifolds, \emph{Int. Math. Res. Not.}, {\bf 2010} (2010), p 1--17. https://doi.org/10.1093/imrn/rnp113

\bibitem{DLZ2} T. Draghici, T.-J. Li, W. Zhang, On the $J$-anti-invariant cohomology, \emph{Quarterly Oxford J. Math}, {\bf 2010} (2013), 83-111. 










\bibitem{Gr-H} P. Griffiths, J. Harris, {\em Principles of Algebraic Gometry}, John Wiley and Sons, INC, 1978











\bibitem{DS} D. McDuff, D. Salamon, {\em $J$-holomorphic curves and Symplectic Topology},
American Mathematical Society Colloquium Publications, {\bf 52}, Providence, RI, 2004.






















\bibitem{T} C. H. Taubes, Tamed to compatible: symplectic forms via moduli space integration, {\em J. Sympl. Geom.} {\bf 9} (2011),  161--250.


\bibitem{U} M. Usher, Standard surfaces and nodal curves in symplectic $4$-manifolds, {\em J. Differential Geom.} {\bf 77} (2007), 237--290.


\bibitem{Voi} C. Voisin, {\em Hodge theory and complex algebrai geometry I}, Cambridge University Press, Cambridge, 2002.

\bibitem{Zhang} W. Zhang, Intersection of almost complex submanifolds, \emph{Camb. J.  Math.} {\bf 6} (2018), 451--496.

\end{thebibliography}
\end{document}